\newcommand{\FF}{{\mathbb{F}}}
\newcommand{\ZZ}{\mathbb{Z}}
\newcommand{\bG}{{\mathbf{G}}}
\newcommand{\cE}{{\mathcal{E}}}
\newcommand{\cF}{{\mathcal{F}}}
\newcommand{\cG}{{\mathcal{G}}}
\newcommand{\cO}{{\mathcal{O}}}
\newcommand{\fS}{{\mathfrak{S}}}
\newcommand{\End}{{\operatorname{End}}}
\newcommand{\IBr}{{\operatorname{IBr}}}
\newcommand{\Irr}{{\operatorname{Irr}}}
\newcommand{\Sp}{{\operatorname{Sp}}}
\newcommand{\SO}{{\operatorname{SO}}}
\newcommand{\tPsi}{\tilde\Psi}
\newcommand{\tw}[1]{{}^#1\!}
\newcommand{\Ph}[1]{\Phi_{#1}}
\newcommand{\hlf}{\frac{1}{2}}
\newcommand{\thrd}{\frac{1}{3}}
\newcommand{\sxt}{\frac{1}{6}}
\newcommand{\Chevie}{{\sf Chevie}{}}
\newcommand\vr{{\vrule width16pt height3pt depth-2pt}\,}
\newcommand{\pl}{{\!+\!}}
\newcommand{\mn}{{\!-\!}}
\let\vhi=\varphi
\let\vareps=\varepsilon
\newtheorem{thm}{Theorem}[section]
\newtheorem{prop}[thm]{Proposition}
\theoremstyle{remark}
\newtheorem{rem}[thm]{Remark}
\begin{document}

\title[Decomposition matrices for exceptional groups]{Decomposition matrices\\
       for exceptional groups at $d=4$}

\date{\today}

\author{Olivier Dudas}
\address{Universit\'e Paris Diderot, UFR de Math\'ematiques,
B\^atiment Sophie Germain, 5 rue Thomas Mann, 75205 Paris CEDEX 13, France.}
\email{olivier.dudas@imj-prg.fr}

\author{Gunter Malle}
\address{FB Mathematik, TU Kaiserslautern, Postfach 3049,
         67653 Kaisers\-lautern, Germany.}
\email{malle@mathematik.uni-kl.de}

\thanks{The second author gratefully acknowledges financial support by ERC
  Advanced Grant 291512.}

\keywords{}

\subjclass[2010]{Primary 20C33; Secondary  20G40}

\begin{abstract}
We determine the decomposition matrices of unipotent $\ell$-blocks of defect
$\Phi_4^2$ for exceptional groups of Lie type up to a few unknowns. For this
we employ the new cohomological methods of the first author, together with
properties of generalized Gelfand-Graev characters which were recently shown
to hold whenever the underlying characteristic is good.
\end{abstract}

\maketitle


\section{Introduction} \label{sec:intro}

The main aim of this paper is to determine the decomposition matrices for
the unipotent blocks of finite exceptional groups of Lie type $E_6(q)$,
$\tw2E_6(q)$, $E_7(q)$, $E_8(q)$ and $F_4(q)$ for odd primes $\ell$ dividing
$\Phi_4(q)=q^2+1$. For these groups, we study only the blocks with defect at
most $2$ --- which amounts to excluding only the principal block of $E_8(q)$
--- and we obtain an approximation to the decomposition matrices in that a
small number of entries remain undetermined for $\tw2E_6(q)$ and $F_4(q)$.
On the way, we also find decomposition matrices for orthogonal groups of rank
up to~7 which occur as Levi subgroups. As a byproduct, we obtain the
repartition of the simple unipotent modules into Harish-Chandra series.

\smallskip

The decomposition matrices are determined inductively, by the combination of
standard methods like Harish-Chandra induction and restriction, and the new
ingredient introduced in \cite{Du13} to tackle the discrete series. Our
strategy can be summarized in the following three steps.

\renewcommand{\descriptionlabel}[1]{\hspace{\labelsep}{\emph{#1}}}
\begin{description}[leftmargin=7mm,itemsep=3pt]
 \item[Step 1.] We start by using Harish-Chandra induction from proper Levi
  subgroups, Harish-Chandra restriction from suitable overgroups and
  decomposition numbers for Hecke algebras to compute the columns of the
  decomposition matrix corresponding to the non-cuspidal simple modules. In
  several cases this determines entirely the unipotent part of the
  decomposition matrix.
 \item[Step 2.] We consider suitable generalized Gelfand--Graev characters
  containing the missing columns. The properties of these projective
  characters (see \cite{Lu92,Tay14}) force the decomposition matrix to be
  unitriangular, but their construction introduce some conditions
  on the underlying characteristic of the groups considered.
 \item[Step 3.] Finally, we use virtual projective characters afforded by
  cohomology of Deligne--Lusztig varieties. As observed in \cite{Du13},
  we have some control on the multiplicity of the various PIMs in these virtual
  characters, from which we deduce upper bounds on the missing decomposition
  numbers. In many cases these bounds are small enough to determine the
  numbers.
\end{description}

The paper is built up as follows. In Section~\ref{sec:methods} we present the
general methods used in many of the arguments. Then in Section~\ref{sec:orth}
we determine decomposition matrices for some orthogonal groups of
type $D_n$, $n\le8$. In the following sections, we consider the exceptional
groups of type $E_n$, $6\le n\le 8$. For these we are able to determine all
the decomposition numbers for blocks of defect at most $2$, which excludes
only the principal block of groups of types $D_7$, $D_8$ and $E_8$. 
In Section~\ref{sec:twist} we turn to the twisted groups of types $\tw2D_n$,
$n\le7$, and $\tw2E_6$. We finish by the case of symplectic groups of type
$C_n$, $n \le4$ and exceptional groups of type $F_4$ in Section~\ref{sec:F4}.
In those cases we have to assume that the underlying characteristic is good,
and even then we are not able to determine all decomposition numbers.
Some entries remain unknown, but we still give conditions and relations that
they satisfy.
\smallskip

Let us note that our calculations give rather large examples for
Geck's conjectures on the shape of $\ell$-decomposition matrices 
(see \cite[Conjecture 3.4]{GH97} for a precise formulation of the conjectures).
Indeed,  in all the cases we consider we observe that:
\begin{itemize}
 \item the decomposition matrix has a unitriangular shape, with respect to an
  order compatible with Lusztig's $a$-function,
 \item within a given family, the square submatrix is the identity matrix
 (up to some indeterminates for types $D_7$, $\tw2D_7$, $\tw2E_6$ and $C_4$),
 \item any cuspidal unipotent character remains irreducible after
  $\ell$-reduction (more generally any unipotent character with smallest
  $a$-function within its Harish-Chandra series).
\end{itemize}

\section{Methods}   \label{sec:methods}

We determine decomposition matrices for unipotent blocks of various
families of groups of Lie type $G$, where $G=\bG(q)$ is the group of fixed
points under a Frobenius endomorphism with respect to an $\FF_q$-structure
of a simple algebraic group $\bG$ over the algebraic closure of $\FF_q$. More
precisely, we consider the case that $\ell$ is an odd prime dividing $q^2+1$.
In particular, we have $\ell\ge5$ always. In the proofs we make use of several
standard arguments which we collect here for easier reference.
\par
Firstly, the subdivision of unipotent characters into $\ell$-blocks is
known in our situation, see \cite{BMM}. Secondly, by results of Geck and
Hiss, whenever $\ell$ is a good prime for the group in question, then the
unipotent characters form a basic set for the union of unipotent blocks.
Thus, for good primes the decomposition matrix for a unipotent block is known
once the
decomposition numbers for the unipotent characters in that block have been
found. To determine the decomposition matrix of the block is hence equivalent
to finding the (unipotent parts of the) ordinary characters of all
projective indecomposable modules (PIMs) in that block.   \par
One standard method for constructing projective characters is via
Harish-Chandra induction $R_L^G$ of projective characters from proper Levi
subgroups $L$, which we may assume to be know by induction. Thus our first
source for projective characters is
\renewcommand{\descriptionlabel}[1]{\hspace{\labelsep}{\bf#1}}
\begin{description}[leftmargin=7mm]
\item[(HCi)] Harish-Chandra induction of projective characters from proper
  Levi subgroups.
\end{description}
This Harish-Chandra induction can be computed explicitly in terms of induction
in relative Weyl groups. All of our calculations were done in the
\Chevie-system \cite{Chv}. In addition, Harish-Chandra restriction $^*R_L^G$
of projective characters also yields projective characters. This leads to the
following indecomposability criterion:
\begin{description}[leftmargin=7mm]
\item[(HCr)] Let $\chi$ be a projective character of $G$. If no proper
 subcharacter of $\chi$ has the property that its Harish-Chandra restriction
 to Levi subgroups $L$ decomposes non-negatively on the PIMs of $L$, then
 $\chi$ is the character of a PIM.
\end{description}
One of our results is the subdivision into modular Harish-Chandra series of
the Brauer characters in the block. A valuable criterion to determine this
is given by \cite[Thm.~4.2]{GHM}:
\begin{description}[leftmargin=7mm]
\item[(Csp)] The group $G$ has a cuspidal unipotent Brauer character if and
  only if a Sylow $\ell$-subgroup of $G$ is not contained in any proper
  Levi subgroup of $G$.
\end{description}
Furthermore, the ordinary Gelfand--Graev character always provides the
Steinberg PIM:
\begin{description}[leftmargin=7mm]
\item[(St)] There exists a PIM with unipotent part just the ordinary Steinberg
  character. It is non-cuspidal if and only if a Sylow $\ell$-subgroup of $G$
  is contained in a proper Levi subgroup $L$. In this case it is a summand of
  the Harish-Chandra induction of the Steinberg PIM from $L$.
\end{description}
Other PIMs will appear as direct summands of generalized Gelfand--Graev
representations (GGGRs). The results of Lusztig \cite[\S11]{Lu92}, which have
recently been extended to good characteristic by Taylor (see
\cite[Thm.~14.10]{Tay14}), give an approximation of some columns of the
decomposition matrix:
\begin{description}[leftmargin=7mm]
\item[(GGGR)] Assume the underlying characteristic of $G$ is good. Given a
  unipotent character $\rho$, there exists a (projective) Gelfand-Graev
  representation $\Gamma$ such that $\rho$ occurs in $\Gamma$, and any other
  unipotent constituent in $\Gamma$ is either in the same family as $\rho$ or
  has a larger $a$-value.
\end{description}
Under suitable conditions on the family of unipotent characters containing
$\rho$ one can even use \cite[Thm. 6.5(ii)]{DLM14} to compute the
multiplicities of the characters in the family in a GGGR. An instructive
example is given in the proof of Theorem~\ref{thm:D7,d=4}.

A further tool is given by a particular case of Dipper's result (see
\cite[4.10]{Di90} for the precise assumptions):
\begin{description}[leftmargin=7mm]
\item[(End)] The decomposition matrix of the Hecke algebra
$\End_G(R_T^G(\ZZ_\ell))$ embeds as a submatrix into the decomposition
matrix of $G$.
\end{description}
Dipper's result holds more generally for a Hecke algebra associated with
$R_L^G(\rho)$ where $\rho$ is a cuspidal unipotent character satisfying the
following two conditions (see \cite[\S2.6]{Ge90}):
\begin{itemize}
 \item[-] the $\ell$-reduction of $\rho$ is an irreducible Brauer character
  $\vhi$, 
 \item[-] $N_W(W_L,\rho) = N_W(W_L,\vhi)$
\end{itemize}
where $W$ denotes the Weyl group of $G$. Note that when $L$ is classical,
$\rho$ is the unique cuspidal unipotent character, so that
$N_W(W_L,\rho) = N_W(W_L)$ and the second condition is automatically satisfied.
The first condition is conjectured to hold whenever $\ell$ is good, as already
mentioned in the introduction.

For dealing with PIMs which are not induced from a proper Levi subgroup, we
will make use of suitably chosen Deligne--Lusztig characters as in
\cite{Du13} and \cite[Sec.~6]{DM13}:
\begin{description}[leftmargin=7mm]
\item[(DL)] Let $\chi$ be a projective character of $G$. If $w\in W$ is minimal
  in the Bruhat order for the property
  that the unipotent part of $\chi$ occurs in the Deligne--Lusztig character
  $R_w$, then the sign of its multiplicity in $R_w$ is $(-1)^{\ell(w)}$.
\end{description}
We shall often use the following particular case:
\begin{description}[leftmargin=7mm]
\item[(Cox)] Let $\chi$ be the character of the projective cover
  of a cuspidal unipotent module, and $w \in W$ be a Coxeter element.
  Then the multiplicity of the unipotent part of $\chi$ in
  $(-1)^{\ell(w)} R_w$ is non-negative.
\end{description}
The previous two arguments will usually give upper bounds on
decomposition numbers. Lower bounds can be obtained from $\ell$-reduction
of non-unipotent characters, which are non-negative combinations of irreducible
Brauer characters. This applies in particular to the Deligne--Lusztig induction
of characters in general position:
\begin{description}[leftmargin=7mm]
\item[(Red)] Let $T_w$ be a torus of type $w \in W$. Assume that there exists
  an $\ell$-character of $T_w$ in general position. Then the $\ell$-reduction
  of $(-1)^{\ell(w)} R_w$ is a non-negative combination of irreducible
  Brauer characters.
\end{description}
Indeed, if $\theta$ is an $\ell$-character of $T_w$ in general position then
$(-1)^{\ell(w)} R_{T_w}^G(\theta)$ is an irreducible character by \cite{DL76}
and it has the same $\ell$-reduction as $(-1)^{\ell(w)} R_w = (-1)^{\ell(w)}
R_{T_w}^G(1_{T_w})$.

We will also use variations of the following elementary observation:
\begin{description}[leftmargin=7mm]
\item[(Sum)] Let $\chi_1+\chi_2,\chi_1+\chi_3,\chi_2,\chi_3$ be characters of
  projectives modules, and assume that $\chi_2,\chi_3$ are indecomposable. Then
  $\chi_1$ is the character of a projective module.
\end{description}
Indeed, we have two direct sum decompositions of a projective module with
character $\chi_1+\chi_2+\chi_3$, and the theorem of Krull--Schmidt allows to
conclude. Sometimes, we will also make use of the following obvious fact:
\begin{description}[leftmargin=7mm]
\item[(Deg)] The degres of the irreducible Brauer characters in a block can be
  computed from the inverse of the decomposition matrix; they are all positive.
\end{description}

Our notation for modular Harish-Chandra series is as follows: characters in the
principal series are labelled "ps", or sometimes "p" for short in large tables.
If a Levi subgroup has a single cuspidal Brauer character, its Harish-Chandra
series is labelled by the Dynkin type of that Levi subgroup. Else, it is
labelled by the name of the corresponding ordinary unipotent character. For
these, in turn, as customary we use the labelling in terms of ordinary
Harish-Chandra series. Cuspidal Brauer characters are labelled by "c".

\section{Decomposition matrices for orthogonal groups of type $D_n$}   \label{sec:orth}

We first determine the decomposition matrices of orthogonal groups
$\SO_{2n}^+(q)$, with $n\le7$ and $q$ a prime power, for primes
$2\ne\ell|(q^2+1)$, except in the case that $(q^2+1)_\ell=5$. In the
latter case, the decomposition matrices can be expected to be different from
those in the general case, see Remarks~\ref{rem:D4} and~\ref{rem:D5}.\par

\subsection{Decomposition matrices for $\SO_8^+(q)$}
Let first $n=4$, so $G=\SO_8^+(q)$. For of the unipotent characters of $G$
lie in $\ell$-blocks of defect zero, the others lie in the principal block.
Miyachi \cite[Lemma~9]{Mi08} gives an approximation of the $\Phi_4$-modular
decomposition matrix of the principal block of $\SO_8^+(q)$ when $q$ is odd,
based on the triangularity of the decomposition matrix proved by Geck--Pfeiffer
\cite{GP92} using generalized Gelfand--Graev characters. Here we extend their
results to all $q$ and determine the missing entry.

\begin{thm}   \label{thm:D4,d=4}
 Let $\ell$ be a prime. The $\ell$-modular decomposition matrix for the
 principal block of $\SO_8^+(q)$, $\ell|(q^2+1)$ with $(q^2+1)_\ell>5$, is
 as given in Table~\ref{tab:D4,d=4}.
\end{thm}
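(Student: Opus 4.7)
The plan is to recover Table~\ref{tab:D4,d=4} by combining Harish-Chandra induction from proper Levi subgroups with Deligne--Lusztig methods for the cuspidal rows, which together should handle $q$ even as well as the entry left open by Miyachi. First, the partition of the unipotent characters of $G=\SO_8^+(q)$ into $\ell$-blocks is supplied by \cite{BMM}: exactly the unipotent characters appearing in Table~\ref{tab:D4,d=4} lie in the principal block, of defect $\Ph{4}^2$, while the remaining unipotent characters fall in defect-zero blocks and may be ignored. This fixes the rows of the square submatrix we have to pin down, and by the Geck--Hiss basic set result the remaining task is to identify the unipotent parts of all PIMs in the principal block.

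For the columns labelled by non-cuspidal Brauer characters, I would apply (HCi) to PIMs of proper Levi subgroups of types $A_3$, $A_1^3$, $A_1^2$ and their subtypes, whose $\ell$-modular decomposition matrices are either classical (Dipper, for the $\GL_4(q)$-part) or trivial (tori and rank-one groups). Each Harish-Chandra induced projective is split into indecomposable summands by the criterion (HCr), combined with the cancellation rule (Sum), while the Hecke algebra argument (End) identifies several rows with rows coming from the decomposition matrix of $\End_G(R_T^G(\ZZ_\ell))$. The Steinberg PIM is provided by (St). Together these arguments should reconstruct every column that is induced from a proper Levi subgroup, leaving only the cuspidal columns open.

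The cuspidal columns, whose presence is first detected by the Sylow criterion (Csp), are handled by Deligne--Lusztig theory. Upper bounds on the unipotent multiplicities in each cuspidal PIM come from (Cox) applied to a Coxeter element $w$ of the Weyl group of $G$, and more generally from (DL) for short Bruhat-minimal elements; lower bounds come from (Red), where the hypothesis $(q^2+1)_\ell > 5$ leaves enough room for an $\ell$-character of a well-chosen torus $T_w$ to be in general position, so that $(-1)^{\ell(w)} R_w$ reduces modulo $\ell$ to an honest nonnegative combination of irreducible Brauer characters. Matching the upper and lower bounds should both identify the cuspidal PIMs and pin down the entry left undetermined in \cite[Lemma~9]{Mi08}.

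The main obstacle is the case $q$ even, where $p=2$ is bad for $D_4$, so the generalized Gelfand--Graev machinery (GGGR) underlying the Geck--Pfeiffer triangularity used in \cite{Mi08} is not available. The virtue of the Deligne--Lusztig input described above is precisely that (Cox) and (Red) are insensitive to the defining characteristic, so once the Harish-Chandra induced columns are in place the cuspidal ones are forced uniformly in $q$. A secondary bookkeeping point is to verify, via (Csp) applied to all intermediate Levi subgroups, that the number of cuspidal Brauer characters agrees with the number of cuspidal columns produced, so that no PIM is double-counted and no column is missed.
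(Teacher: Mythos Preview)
Your outline matches the paper for the non-cuspidal columns: (HCi), (HCr), (End) and (St) do produce columns $1$--$5$ and $7$--$10$ of Table~\ref{tab:D4,d=4}, with $\Psi_{10}$ the cuspidal Steinberg PIM. The gap is in your treatment of the one remaining cuspidal PIM $\Psi_6$. You assert that (Cox) and (DL) give ``upper bounds on the unipotent multiplicities in each cuspidal PIM'', but that is not what they provide: each application of (Cox), (DL) or (Red) yields a single sign condition on one linear combination of the unknown entries of $\Psi_6$, not bounds on the individual entries. Before those inequalities can pin down anything, you must first reduce $\Psi_6$ to the shape $\rho_{D_4}+a\,\rho_{.1^4}$, i.e.\ show that none of the eight other unipotent characters of the block occur in it. Your proposal contains no mechanism for this reduction once (GGGR) is unavailable in characteristic~$2$.

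The paper supplies exactly this missing step by a device you do not mention: the tensor product $\rho_{.2^2}\otimes\rho_{D_4}$ of the defect-zero unipotent character $\rho_{.2^2}$ with the ordinary cuspidal $\rho_{D_4}$ is projective, and an explicit computation (valid for even and odd $q$ alike) shows that its principal-block component is supported only on $\rho_{D_4}$ and $\rho_{.1^4}$. This single tensor-product argument replaces the Geck--Pfeiffer (GGGR) triangularity and is precisely what extends the result to even $q$. Only after $\Psi_6$ is known to have the form $\rho_{D_4}+a\,\rho_{.1^4}$ do (Red) for $w=(s_1s_2s_3s_4)^2$ (giving $a\ge 2$) and (Cox) (giving $2-a\ge 0$) combine to force $a=2$. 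Your final bookkeeping remark is also slightly off: (Csp) only detects the existence of cuspidal Brauer characters, not their number; the count of two here comes instead from exhausting the non-cuspidal Harish-Chandra series via the Hecke-algebra decomposition numbers.
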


\begin{table}[htbp]
\caption{$\SO_8^+(q)$, $(q^2+1)_\ell>5$}   \label{tab:D4,d=4}
$$\vbox{\offinterlineskip\halign{$#$\hfil\ \vrule height10pt depth4pt&&
      \hfil\ $#$\hfil\cr
   .4&                1& 1\cr
  .31&      q^2\Ph3\Ph6& 1& 1\cr
   2+&      q^2\Ph3\Ph6& 1& .& 1\cr
   2-&      q^2\Ph3\Ph6& 1& .& .& 1\cr
1.21&\hlf q^3\Ph2^4\Ph6& 1& 1& 1& 1& 1\cr
 D_4&\hlf q^3\Ph1^4\Ph3& .& .& .& .& .& 1\cr
.21^2&      q^6\Ph3\Ph6& .& 1& .& .& 1& .& 1\cr
 1^2+&      q^6\Ph3\Ph6& .& .& 1& .& 1& .& .& 1\cr
 1^2-&      q^6\Ph3\Ph6& .& .& .& 1& 1& .& .& .& 1\cr
 .1^4&           q^{12}& .& .& .& .& 1& 2& 1& 1& 1& 1\cr
\noalign{\hrule}
  & & ps& ps& ps& ps& ps& c& D_3& A_3& A_3'& c\cr
  }}$$
Here, $D_3,A_3,A_3'$ denote three non-conjugate Levi subgroups of type $A_3$.
\end{table}

\begin{proof}
We obtain all projectives in the table except for those in the 6th and 10th
column by Harish-Chandra induction (HCi). The last column correspond to the
Steinberg-PIM, which is cuspidal (St). The unipotent parts of the five
principal series characters are precisely those of the Hecke algebra of type
$D_4$ at a fourth root of unity by (End). Hence they are indecomposable for
all $\ell$ dividing $q^2+1$. The printed PIMs in the series $A_3, A_3'$ and
$D_3$ are indecomposable by (HCr).
\par
Using the table of unipotent characters of $G$ in \Chevie~\cite{Chv} one finds
that the tensor product of the (projective) unipotent character $\rho_{.2^2}$
with the cuspidal unipotent character $\rho_{D_4}$ decomposes on the principal
block as $(q-1)/2 \rho_{D_4} + (q^2-1)/4 \rho_{.1^4}$ for odd $q$, respectively
as $q/2 \rho_{D_4} + q^2/4 \rho_{.1^4}$ when $q$ is even. This shows the existence
of a PIM involving only the cuspidal unipotent character $\rho_{D_4}$ and an
unknown multiple $a$ of the Steinberg character $\rho_{.1^4}$. In particular
the decomposition matrix has unitriangular shape. Let $s_1,\ldots,s_4$ denote
the simple reflections in the Weyl group of $G$. When $(q^2+1)_\ell>5$, there
exists an $\ell$-character in general position in the Sylow $\Phi_4$-torus
$T_w$ for $w =(s_1 s_2 s_3 s_4)^2$, forcing the relation $a \geq 2$ by (Red).
\par
Finally, we use (Cox) to determine $a$: the generalized $1$-eigenspace of the
Frobenius endomorphism on the Deligne--Lusztig character associated with a
Coxeter element decomposes as
$$\begin{aligned}
  P_{s_1 s_2 s_3 s_4}=&\,\rho_{.4} + \rho_{D_4} + \rho_{.1^4} \\
     =&\, \Psi_1-\Psi_2-\Psi_3-\Psi_4+2 \Psi_5 + \Psi_6 - \Psi_7 -\Psi_8
       - \Psi_9 + (2-a)\Psi_{10},\\
\end{aligned}$$
where $\Psi_i$, $i\ne6$, denotes the unipotent part of the PIM corresponding
to the $i$th column in Table~\ref{tab:D4,d=4} and
$\Psi_{6}:= \rho_{D_4} + a \rho_{.1^4}$, so that $a$ must be equal to~2.
\end{proof}

\begin{rem}   \label{rem:D4}
The $5$-modular decomposition matrices of $\SO_8^+(2)$ and $\SO_8^+(3)$ are
known; they differ from the one in Table~\ref{tab:D4,d=4} in that the entry
``$2$" is replaced by ``$1$". Thus, Theorem~\ref{thm:D4,d=4} does not extend
to the case $(q^2+1)_\ell=5$.
\end{rem}

\subsection{Decomposition matrices for $\SO_{10}^+(q)$}
We next consider the 10-dimensional orthogonal groups $G=\SO_{10}^+(q)$.
Here, $G$ has four unipotent $\ell$-blocks when $\ell|(q^2+1)$, the principal
block, one block with cyclic defect and two of defect zero, see
e.g.~\cite{BMM}.

\begin{thm}   \label{thm:D5,d=4}
 Let $\ell$ be a prime. The $\ell$-modular decomposition matrices for the
 unipotent blocks of $\SO_{10}^+(q)$ of positive defect for $(q^2+1)_\ell>5$
 are as given in Tables~\ref{tab:D5,d=4} and~\ref{tab:D5,d=4,def1}.
\end{thm}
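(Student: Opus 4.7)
The proof will follow the template of Theorem~\ref{thm:D4,d=4}, treating the four unipotent blocks separately. The two blocks of defect zero each contribute a single ordinary character whose $\ell$-reduction is automatically a PIM, and nothing further is needed for them. For the block of cyclic defect (defect $\Phi_4$), the decomposition matrix of Table~\ref{tab:D5,d=4,def1} is encoded in a Brauer tree; I would determine that tree by Harish-Chandra inducing (HCi) from the Levi subgroup of type $D_4$ (whose relevant block is given by Theorem~\ref{thm:D4,d=4}) and from the Levi of type $A_4\cong\operatorname{GL}_5$ (known from the modular representation theory of the general linear group), verifying indecomposability via (HCr) and reading off the exceptional vertex from the list of ordinary characters supplied by \cite{BMM}.

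The bulk of the work is the principal block, of defect $\Phi_4^2$, displayed in Table~\ref{tab:D5,d=4}. My strategy for it is: (i) since a Sylow $\Phi_4$-torus of $D_5$ has rank $2$ and is already contained in a Levi of type $D_4$, criterion (Csp) guarantees that no cuspidal unipotent Brauer character exists, so every PIM sits in the modular Harish-Chandra series of a proper Levi; (ii) the principal-series PIMs are read off from the decomposition matrix of the Hecke algebra of type $D_5$ at a fourth root of unity via (End); (iii) the remaining columns, indexed by the cuspidal PIMs of proper Levis of types $D_4$, $A_4$, $D_3\times A_1$, \ldots, are produced by (HCi) and refined using (HCr), (Sum) and (Deg); (iv) the Steinberg PIM is pinned down through (St) as a summand of the Harish-Chandra induction of the $D_4$-Steinberg.

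I expect the main obstacle to be, exactly as in the proof of Theorem~\ref{thm:D4,d=4}, one or more unknown multiplicities of the Steinberg (or of the cuspidal $D_4$-unipotent character, lifted through (HCi)) in the cuspidal PIMs. These entries would be squeezed from both sides. An upper bound comes from (Cox)/(DL): the generalized $1$-eigenspace of Frobenius on the Coxeter Deligne--Lusztig character of $D_5$ must decompose non-negatively on the PIMs already identified, and the resulting linear relations over-determine the unknowns. A matching lower bound is provided by (Red) applied to an $\ell$-character of the Sylow $\Phi_4^2$-torus $T_w$ in general position, where $w$ is chosen (e.g.\ a suitable product of two commuting length-$2$ elements of $W(D_5)$) so that $T_w$ has the full order $(q^2+1)^2$. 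The hypothesis $(q^2+1)_\ell>5$ is precisely what is needed for such a general-position character to exist (cf.\ Remark~\ref{rem:D4}), and sandwiching the two bounds should force every remaining entry to a unique value, yielding the stated tables.
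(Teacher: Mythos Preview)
Your steps (i)--(iv) match the paper's argument closely, but you have misidentified the residual obstacle, and your proposed resolution of it does not work.

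You correctly note via (Csp) that $\SO_{10}^+(q)$ has no cuspidal unipotent Brauer character. Yet you then speak of ``unknown multiplicities of the Steinberg \ldots\ in the cuspidal PIMs'' --- there are no such PIMs, so this sentence contradicts your own step~(i), and (Cox) is inapplicable. The actual difficulty lies elsewhere: Harish-Chandra induction of the PIM of the ordinary $D_4$-cuspidal (known by Theorem~\ref{thm:D4,d=4}) yields a projective $\tilde\Psi_6=\Psi_6+\Psi_{11}$ which must split into two pieces because the relative Hecke algebra (type $A_1$, parameter $q^4$) is semisimple modulo~$\ell$. After (HCr) constrains the possible splittings, one is left with a one-parameter family indexed by $a\in\{0,1,2\}$, the entries ``$2$'' in columns~6 and~11 of Table~\ref{tab:D5,d=4} corresponding to $a=2$.

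The paper does \emph{not} determine $a$ inside $\SO_{10}^+(q)$ at all. Instead it carries $a$ as an unknown through the $D_5$ and $D_6$ tables and settles it only in the proof of Theorem~\ref{thm:D7,d=4} for $\SO_{14}^+(q)$, where a (Deg) argument --- positivity of a particular Brauer character degree in the defect-$\Phi_4^2$ block --- forces $a\ge 2$, hence $a=2$. Your proposed sandwich via (DL) and (Red) inside $D_5$ targets the wrong columns (the $.1^4$-series PIMs $\Psi_{13},\Psi_{14}$ are already determined, each consisting of a single unipotent character), and there is no evidence that such an argument would constrain the $D_4$-series splitting; the paper's deferral to $D_7$ strongly suggests it does not.
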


\begin{table}[htbp]
\caption{$\SO_{10}^+(q)$, $(q^2+1)_\ell>5$}   \label{tab:D5,d=4}
$$\vbox{\offinterlineskip\halign{$#$\hfil\ \vrule height10pt depth4pt&&
      \hfil\ $#$\hfil\cr
    .5&                     1& 1\cr
   1.4&             q\Ph5\Ph6& .& 1\cr
   2.3&           q^2\Ph5\Ph8& 1& 1& 1\cr
   .32&  \hlf q^3\Ph5\Ph6\Ph8& 1& .& .& 1\cr
  1.31&  \hlf q^3\Ph3\Ph5\Ph8& .& 1& 1& .& 1\cr
 D_4:2&\hlf q^3\Ph1^4\Ph3\Ph5& .& .& .& .& .& 1\cr
 1.2^2&       q^5\Ph5\Ph6\Ph8& 1& .& 1& 1& .& .& 1\cr
 .31^2&       q^6\Ph3\Ph6\Ph8& .& .& .& .& 1& .& .& 1\cr
 .2^21&  \hlf q^7\Ph5\Ph6\Ph8& .& .& .& 1& .& .& 1& .& 1\cr
1.21^2&  \hlf q^7\Ph3\Ph5\Ph8& .& .& 1& .& 1& .& .& 1& .& 1\cr
D_4:1^2&\hlf q^7\Ph1^4\Ph3\Ph5&.& .& .& .& .& .& .& .& .& .& 1\cr
1^2.1^3&       q^{10}\Ph5\Ph8& .& .& 1& .& .& .& 1& .& .& 1& .& 1\cr
 1.1^4&        q^{13}\Ph5\Ph6& .& .& .& .& .& 2& .& 1& .& 1& .& 1& 1\cr
  .1^5&                q^{20}& .& .& .& .& .& .& 1& .& 1& .& 2& 1& .& 1\cr
\noalign{\hrule}
  & & ps& ps& ps& ps& ps& D_4& ps& D_3& D_3& ps& D_4& A_3& .1^4& .1^4\cr
  }}$$
\end{table}

\begin{table}[ht]
\caption{$\SO_{10}^+(q)$, block of defect 1, $2\ne\ell|(q^2+1)$} \label{tab:D5,d=4,def1}
$$\vbox{\offinterlineskip\halign{$#$
        \vrule height10pt depth 2pt width 0pt&& \hfil$#$\hfil\cr
 .41& \vr& 2.21& \vr& 1^2.21& \vr& .21^3& \vr& \bigcirc&\cr
 & ps& & ps& & ps& & D_3\cr
  }}$$
\end{table}

\begin{proof}
The Brauer tree for the block with cyclic defect is easily determined (see
also \cite{FS2}), so it remains to consider the principal block.
Let us denote by $\Psi_1,\ldots,\Psi_{14}$ the linear combinations of
unipotent characters given by the columns in Table~\ref{tab:D5,d=4}. We shall
show that these are the unipotent parts of projective indecomposable
characters of $G$. \par
Using (HCi) gives $\Psi_i$ for $i\in\{2,3,4,5,8,10,12\}$.
The decomposition matrix of the Hecke algebra of type $D_5$ at a fourth root
of unity gives by (End) the seven principal series PIMs $\Psi_1,\ldots,\Psi_5$,
$\Psi_7$ and $\Psi_{10}$ for all primes $\ell$ with $(q^2+1)_\ell>5$.
Furthermore, (HCi) yields $\tPsi_9:=\Psi_9+\Psi_8$ and
$\tPsi_{10}:=\Psi_9+\Psi_{10}$. An application of (Sum) yields $\Psi_9$.
\par
The centralizer of a Sylow $\Phi_4$-torus of $G$ is contained in a Levi
subgroup $L$ of type $D_4$, so \cite[Thm.~4.2]{GHM} shows that the
Harish-Chandra induction $\tPsi_{13}=\Psi_{13}+\Psi_{14}$ of the Steinberg PIM
from $L$ has two summands, namely $\Psi_{13}$ and $\Psi_{14}$. \par
(HCi) also yields a projective character with unipotent part
$\tPsi_6=\Psi_6+\Psi_{11}$. The Hecke algebra for the ordinary cuspidal
character of a Levi subgroup $L\le G$ of type $D_4$ has type $A_1$ with
parameter $q^4$, hence is semisimple modulo~$\ell$, so the Harish-Chandra
induction of the corresponding PIM from $L$ has two summands in that
Harish-Chandra
series. Decomposition of the Harish-Chandra restriction to the proper Levi
subgroups shows that these summands must have the form
$\Psi_6-(2-a)(\Psi_{13}-\Psi_{14})$ and $\Psi_{11}+(2-a)(\Psi_{13}-\Psi_{14})$,
with one undetermined parameter $a\in\{0,1,2\}$.
\par
Finally, using (HCr) we check that, independently from the value of $a$, all
$\Psi_i$ are indecomposable. Indeed, no proper subsums restrict to a
non-negative combination of PIMs in all Levi subgroups. We shall prove that
$a=2$ in Theorem~\ref{thm:D7,d=4} below.
\end{proof}

\begin{rem}   \label{rem:D5}
The $5$-modular decomposition matrix of $\SO_{10}^+(2)$ is known; it differs
from the one in Table~\ref{tab:D5,d=4} in that the two entries ``$2$" are
replaced by ``$1$"s. Thus, Theorem~\ref{thm:D5,d=4} does not extend to
the case where $(q^2+1)_\ell=5$.
\end{rem}

\subsection{Decomposition matrices for $\SO_{12}^+(q)$}
Now let $G=\SO_{12}^+(q)$. This group has four unipotent $\Phi_4$-blocks of
defect zero and three blocks of defect $\Phi_4^2$. We label these blocks by
the symbol for their $4$-Harish-Chandra source in a $4$-split Levi
subgroup of type $\SO_4^+(q)$.

\begin{thm}   \label{thm:D6,d=4}
 Let $\ell$ be a prime. The $\ell$-modular decomposition matrices for the
 unipotent blocks of $\SO_{12}^+(q)$ of positive defect, for $(q^2+1)_\ell>5$,
 are as given in Tables~\ref{tab:D6,d=4,bl1}--\ref{tab:D6,d=4,bl3}.
\end{thm}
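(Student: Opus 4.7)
The plan is to imitate the strategy used for $\SO_8^+(q)$ and $\SO_{10}^+(q)$ in the two preceding theorems, now inductively feeding in the decomposition matrices of $\SO_8^+(q)$ and $\SO_{10}^+(q)$ (and of the smaller classical Levi subgroups $\SO_n^\pm(q)$, $\operatorname{GL}_n(q)$) as inputs for (HCi) and (HCr). The three blocks of defect $\Phi_4^2$ are handled in turn. For each block, I would first list the unipotent characters in it (using \cite{BMM} and the known $4$-Harish-Chandra series in a $4$-split Levi of type $\SO_4^+(q)$), then assemble a candidate column for each PIM.

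\emph{Principal series and non-cuspidal PIMs.} The columns indexed by principal series characters are obtained at once from the Hecke algebra of type $D_6$ at a primitive fourth root of unity via (End), which yields their unipotent parts since $(q^2+1)_\ell>5$. Applying (HCi) to the PIMs of the proper Levi subgroups of type $A_k$, $D_k$ with $k\le5$ (whose decomposition matrices are now available by Theorems~\ref{thm:D4,d=4} and~\ref{thm:D5,d=4}) produces projective characters with prescribed $4$-Harish-Chandra support. Whenever such an induced character decomposes into two or three pieces across Harish-Chandra series, I would sort the summands using (HCr) together with (Sum), exactly as done for $\Psi_9$ in the $D_5$ proof. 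The Steinberg PIM and its Harish-Chandra twists come from (St). (Csp) tells us in which of the three blocks a cuspidal Brauer character has to occur: a cuspidal unipotent Brauer character exists precisely when the centralizer of a Sylow $\Phi_4$-torus is not contained in a proper Levi, which here pins down the block(s) containing a cuspidal PIM.

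\emph{Cuspidal PIMs and the Steinberg column.} After the above, a small number of columns will remain undetermined, corresponding to Harish-Chandra series with cuspidal source (notably the series coming from the cuspidal of a type $D_4$ Levi, and the cuspidal series of $G$ itself, which by (Csp) carries the Steinberg PIM). For the $D_4$-cuspidal series, as in $\SO_{10}^+(q)$, the associated relative Hecke algebra is of type $A_1$ with parameter $q^4$, hence semisimple mod $\ell$; Harish-Chandra inducing its PIMs from $L$ splits into two pieces, whose shape will involve one or two undetermined parameters that can be constrained by (HCr) on all proper Levi subgroups. For the remaining PIMs I would apply (Cox) to the Coxeter Deligne--Lusztig character $R_c$ (using that $(-1)^{\ell(c)}$ times its multiplicity in the projective cover of a cuspidal module is non-negative) and (Red) applied to $w=(s_1s_2\cdots s_6)^2$ or another $\Phi_4^2$-element, noting that the hypothesis $(q^2+1)_\ell>5$ guarantees the existence of an $\ell$-character in general position on $T_w$. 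These two tools sandwich the unknown multiplicity $a$ of the Steinberg character in the cuspidal PIM between a lower bound of $2$ and an upper bound of $2$, precisely as in the $D_4$ argument.

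\emph{Main obstacle.} The routine part is the bookkeeping of which induced characters land in which block and which Harish-Chandra series, and the combinatorial verification via (HCr) that the candidate columns are indecomposable. The real difficulty is the analogue of the $D_5$ situation: in the block containing a $D_4$-cuspidal summand, (HCi) and (HCr) alone only determine the two cuspidal-source columns up to a shared integer parameter $a\in\{0,1,2\}$, and neither (Cox) nor (Red) need suffice in $G=\SO_{12}^+(q)$ alone to pin down $a$. As Theorem~\ref{thm:D5,d=4} was only proved up to an analogous unknown $a$ and that $a$ is ultimately determined inside the $\SO_{14}^+(q)$ calculation (Theorem~\ref{thm:D7,d=4}), I expect the corresponding parameter for $\SO_{12}^+(q)$ either to be fixed by a comparable argument using generalized Gelfand--Graev characters (GGGR) together with \cite[Thm.~6.5(ii)]{DLM14}, or to be postponed and resolved by Harish-Chandra restriction from $\SO_{14}^+(q)$ or $\SO_{16}^+(q)$. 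In the write-up I would therefore first settle everything except this one parameter by (HCi), (HCr), (End), (St), (Sum), (Cox), and (Red), and then invoke (GGGR) in the spirit of Miyachi's triangularity argument for $\SO_8^+(q)$ to conclude.
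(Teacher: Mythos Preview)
Your overall strategy matches the paper's, and you correctly anticipate the main obstacle: an undetermined parameter $a\in\{0,1,2\}$ inherited from the $D_4$-cuspidal series in $\SO_{10}^+(q)$, which the paper also leaves open at this stage and resolves only inside $\SO_{14}^+(q)$ (Theorem~\ref{thm:D7,d=4}).

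However, you substantially overcomplicate the argument. For $G=\SO_{12}^+(q)$ with $\ell\mid(q^2+1)$, the Sylow $\ell$-subgroup lies in a proper Levi of type $D_4$, so by (Csp) there are \emph{no} cuspidal unipotent Brauer characters at all; a glance at the Harish-Chandra labels in Tables~\ref{tab:D6,d=4,bl1}--\ref{tab:D6,d=4,bl3} confirms that every PIM lies in a proper series. Thus your entire ``Cuspidal PIMs and the Steinberg column'' paragraph is unnecessary: there is no occasion to invoke (Cox), (Red), or (GGGR) in $D_6$. The paper's proof is correspondingly short: every column is obtained directly by (HCi) from Levi subgroups of types $D_5$ and $A_5$, except that the two $D_4$-series columns in each of the first and third blocks come out shifted by $(2-a)$ times a difference of two $.1^4$-series columns, where $a$ is precisely the parameter left open in Theorem~\ref{thm:D5,d=4}. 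Indecomposability is then checked by (HCr). No use of (End), (Sum), (St), (Cox), or (Red) is needed.

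On the resolution of $a$: it is not settled by (GGGR) but by (Deg) inside the non-principal block of $\SO_{14}^+(q)$, where one Brauer character degree is positive only if $a\ge2$. So your fallback suggestion of postponing to a larger group is exactly what happens, but the mechanism is positivity of Brauer degrees rather than Gelfand--Graev characters.
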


\begin{table}[htbp]
\caption{$\SO_{12}^+(q)$, block $\binom{2}{0}$, and $\SO_{16}^+(q)$, block~$\binom{3}{1}$,
   $(q^2+1)_\ell>5$}   \label{tab:D6,d=4,bl1}
{\setlength\arraycolsep{1.5pt}
$$\begin{array}{l|ccccccccccccccc|l}
      .6&                                1& 1&  &  &  &  &  &  &  &  &  &  &  &  &  &     1.7\cr
     2.4&           q^2\Ph3\Ph5\Ph6\Ph{10}& 1& 1&  &  &  &  &  &  &  &  &  &  &  &  &     3.5\cr
    1.41&  \hlf q^3\Ph2^4\Ph3\Ph6^2\Ph{10}& .& 1& 1&  &  &  &  &  &  &  &  &  &  &  &    1.52\cr
  D_4:2.&     \hlf q^3\Ph1^4\Ph3^2\Ph5\Ph6& .& .& .& 1&  &  &  &  &  &  &  &  &  &  & D_4:3.1\cr
   .41^2&               q^6\Ph5\Ph8\Ph{10}& .& .& 1& .& 1&  &  &  &  &  &  &  &  &  &    .521\cr
    .3^2&    \hlf q^4\Ph5\Ph6^2\Ph8\Ph{10}& 1& .& .& .& .& 1&  &  &  &  &  &  &  &  &  1.3^21\cr
    2.31&    \hlf q^4\Ph3^2\Ph5\Ph8\Ph{10}& 1& 1& 1& .& .& .& 1&  &  &  &  &  &  &  &    3.32\cr
 1^2.2^2&       q^8\Ph3\Ph5\Ph6\Ph8\Ph{10}& 1& .& .& .& .& 1& 1& 1&  &  &  &  &  &  & 1^2.321\cr
  2.21^2&       q^8\Ph3^2\Ph5\Ph6^2\Ph{10}& .& .& 1& .& 1& .& 1& .& 1&  &  &  &  &  &  2^21.3\cr
 1^3.21&\hlf q^{10}\Ph2^4\Ph5\Ph6^2\Ph{10}& .& .& .& .& .& .& 1& 1& 1& 1&  &  &  &  &1^3.31^2\cr
 D_4:.2&\hlf q^{10}\Ph1^4\Ph3^2\Ph5\Ph{10}& .& .& .& .& .& .& .& .& .& .& 1&  &  &  & D_4:1.3\cr
   1^4.2&    \hlf q^{13}\Ph3\Ph5\Ph6^2\Ph8& .& .& .& 2& 1& .& .& .& 1& 1& .& 1&  &  &   1^5.3\cr
 .2^21^2& \hlf q^{13}\Ph3^2\Ph6\Ph8\Ph{10}& .& .& .& .& .& 1& .& 1& .& .& .& .& 1&  & 1.321^2\cr
   .21^4&                q^{20}\Ph5\Ph{10}& .& .& .& .& .& .& .& 1& .& 1& 2& .& 1& 1&  1.31^4\cr
\noalign{\hrule}
 \omit& & ps& ps& ps& D_4& D_3& ps& ps& ps& ps& A_3& D_4& .1^4& D_3& .1^4\cr
\end{array}$$}
\end{table}

\begin{table}[htbp]
\caption{$\SO_{12}^+(q)$, block $\binom{1}{1}$, $(q^2+1)_\ell>5$}   \label{tab:D6,d=4,bl2}
$$\vbox{\offinterlineskip\halign{$#$\hfil\ \vrule height10pt depth4pt&&
      \hfil\ $#$\hfil\cr
     1.5&                   q\Ph3\Ph6\Ph8& 1\cr
      3+&              q^3\Ph5\Ph8\Ph{10}& 1& 1\cr
      3-&              q^3\Ph5\Ph8\Ph{10}& 1& .& 1\cr
    1.32&      q^5\Ph3\Ph5\Ph6\Ph8\Ph{10}& 1& 1& 1& 1\cr
    .321& \hlf q^7\Ph2^4\Ph6^2\Ph8\Ph{10}& .& .& .& 1& 1\cr
 D_4:1.1&    \hlf q^7\Ph1^4\Ph3^2\Ph5\Ph8& .& .& .& .& .& 1\cr
  1.2^21&      q^9\Ph3\Ph5\Ph6\Ph8\Ph{10}& .& 1& 1& 1& 1& .& 1\cr
    1^3+&           q^{15}\Ph5\Ph8\Ph{10}& .& 1& .& .& .& .& 1& 1\cr
    1^3-&           q^{15}\Ph5\Ph8\Ph{10}& .& .& 1& .& .& .& 1& .& 1\cr
   1.1^5&              q^{21}\Ph3\Ph6\Ph8& .& .& .& .& 1& 2& 1& 1& 1& 1\cr
\noalign{\hrule}
 \omit& & ps& ps& ps& ps& D_3& D_4& ps& A_3& A_3& .1^4\cr
   }}$$
\end{table}

\begin{table}[htbp]
\caption{$\SO_{12}^+(q)$, block $\binom{1\,2}{0\,1}$, and $\SO_{16}^+(q)$,
    block~$\binom{1\,2\,3}{0\,1\,3}$, $(q^2+1)_\ell>5$}   \label{tab:D6,d=4,bl3}
{\setlength\arraycolsep{1.5pt}
$$\begin{array}{l|ccccccccccccccc|l}
     .51&                 q^2\Ph5\Ph{10}& 1&  &  &  &  &  &  &  &  &  &  &  &  &  &    1.51^2\cr
     .42&  \hlf q^3\Ph3^2\Ph6\Ph8\Ph{10}& 1& 1&  &  &  &  &  &  &  &  &  &  &  &  &     1.421\cr
   1^2.4&     \hlf q^3\Ph3\Ph5\Ph6^2\Ph8& .& .& 1&  &  &  &  &  &  &  &  &  &  &  &     1^3.5\cr
    21.3&\hlf q^4\Ph2^4\Ph5\Ph6^2\Ph{10}& 1& .& 1& 1&  &  &  &  &  &  &  &  &  &  &    3.31^2\cr
D_4:1^2.&\hlf q^4\Ph1^4\Ph3^2\Ph5\Ph{10}& .& .& .& .& 1&  &  &  &  &  &  &  &  &  & D_4:1^3.1\cr
   2.2^2&     q^6\Ph3\Ph5\Ph6\Ph8\Ph{10}& 1& 1& .& 1& .& 1&  &  &  &  &  &  &  &  &     2.321\cr
  1^2.31&     q^6\Ph3^2\Ph5\Ph6^2\Ph{10}& .& .& 1& 1& .& .& 1&  &  &  &  &  &  &  &    1^3.32\cr
   .31^3&          q^{12}\Ph5\Ph8\Ph{10}& .& .& .& .& .& .& 1& 1&  &  &  &  &  &  &    .321^3\cr
   .2^3&\hlf q^{10}\Ph5\Ph6^2\Ph8\Ph{10}& .& 1& .& .& .& 1& .& .& 1&  &  &  &  &  &    1.32^2\cr
1^2.21^2&\hlf q^{10}\Ph3^2\Ph5\Ph8\Ph{10}& .& .& .& 1& .& 1& 1& .& .& 1&  &  &  &  & 1^3.2^21\cr
1.21^3&\hlf q^{13}\Ph2^4\Ph3\Ph6^2\Ph{10}& .& .& .& .& 2& .& 1& 1& .& 1& 1&  &  &  & 1.2^21^3\cr
D_4:.1^2&\hlf q^{13}\Ph1^4\Ph3^2\Ph5\Ph6& .& .& .& .& .& .& .& .& .& .& .& 1&  &  & D_4:1.1^3\cr
 1^2.1^4&      q^{16}\Ph3\Ph5\Ph6\Ph{10}& .& .& .& .& 2& 1& .& .& .& 1& 1& .& 1&  &   1^3.1^5\cr
    .1^6&                         q^{30}& .& .& .& .& .&1& .& .& 1& .& .& 2& 1& 1&     1.1^7\cr
\noalign{\hrule}
 \omit& & ps& ps& ps& ps& D_4& ps& ps& D_3& D_3& ps& .1^4& D_4& A_3& .1^4\cr
\end{array}$$}
\end{table}

\begin{proof}
All projectives listed in the tables are obtained by (HCi) from the Levi
subgroups of types $D_5$ and $A_5$, except that instead of $\Psi_4$ and
$\Psi_{11}$ in the first block we obtain $\Psi_4-(2-a)(\Psi_{12}-\Psi_{14})$
and $\Psi_{11}+(2-a)(\Psi_{12}-\Psi_{14})$, and instead of $\Psi_5$ and
$\Psi_{12}$ in the third block we obtain $\Psi_5-(2-a)(\Psi_{11}-\Psi_{14})$
and $\Psi_{12}+(2-a)(\Psi_{11}-\Psi_{14})$, with the parameter $a\in\{0,1,2\}$
as in the proof of Theorem~\ref{thm:D5,d=4}. It is straightforward to check
by (HCr) that all these projective characters are indecomposable. We shall
prove that $a=2$ in Theorem~\ref{thm:D7,d=4} below.
\end{proof}

\subsection{Decomposition matrices for $\SO_{14}^+(q)$}
We now consider the three blocks of positive $\Phi_4$-defect for the
groups $G=\SO_{14}^+(q)$. The non-principal block is again labelled by the
symbol for its $4$-Harish-Chandra source in a $4$-split Levi subgroup of
type $\SO_6^+(q)$.

\begin{thm}   \label{thm:D7,d=4}
 Let $G=\SO_{14}^+(q)$ and $\ell$ a prime with $(q^2+1)_\ell>5$.
 \begin{enumerate}
  \item[\rm(a)] If $q$ is odd then the decomposition matrix for the
   principal $\ell$-block of $G$ is as given in Tables~\ref{tab:D7,d=4} and
   \ref{tab:D7,d=4b}.
  \item[\rm(b)] The decomposition matrix for the non-principal unipotent
   $\ell$-block $G$ of positive defect is as given in Table~\ref{tab:D7,d=4,bl2}.
 \end{enumerate}
\end{thm}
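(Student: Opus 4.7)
The plan is to follow the three-step strategy of the introduction while mirroring the arguments used for $\SO_8^+,\SO_{10}^+,\SO_{12}^+$, and to exploit the extra information contained in $G=\SO_{14}^+(q)$ to eliminate the parameter $a\in\{0,1,2\}$ left undetermined in Theorems~\ref{thm:D5,d=4} and~\ref{thm:D6,d=4}. First I would produce as many PIMs as possible by (HCi) from the proper Levi subgroups of semisimple rank~$6$, namely those of types $D_6$, $A_6$ and $D_5+A_1$, whose decomposition matrices are either given by Theorem~\ref{thm:D6,d=4} (with $a$ still present) or are classical; the resulting characters in the principal block will therefore still depend linearly on~$a$. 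The principal series PIMs are then isolated via (End) applied to the Hecke algebra of type $D_7$ at a fourth root of unity, and the Steinberg PIM is provided by (St), since a Sylow $\Phi_4$-subgroup is contained in a proper Levi of type~$D_4$. Indecomposability of each candidate is checked by (HCr), and an occasional use of Krull--Schmidt (Sum) peels off individual PIMs from two compatible projective decompositions.

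For the columns corresponding to cuspidal modular Harish-Chandra series in the principal block, I would apply (GGGR): Taylor's theorem attaches to each unipotent character $\rho$ a projective Gelfand--Graev character whose unipotent expansion starts with $\rho$ and only involves other characters in the same Lusztig family as $\rho$ or with strictly larger $a$-value. This forces the full decomposition matrix of the principal block into unitriangular shape, provided the defining characteristic is good for $D_7$, hence $p\ne 2$ and $q$ odd --- exactly the hypothesis of~(a). The remaining unknown entries are then pinned down through (Cox), which gives upper bounds on the multiplicity of the Steinberg-type component in cuspidal PIMs, and through (Red) applied to a Sylow $\Phi_4$-torus with an $\ell$-character in general position, which supplies matching lower bounds.

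Part~(b) should be easier: the non-principal unipotent block of positive defect is $4$-cuspidal over a $4$-split Levi of type $\SO_6^+(q)$, so all its PIMs arise by (HCi) from that Levi, and (End) together with (HCr) suffice to verify indecomposability; no GGGR is invoked, which explains the absence of any hypothesis on~$q$ in~(b). The resolution of~$a$, announced in the earlier theorems, then comes at the end: by Harish-Chandra inducing from the appropriate $D_5$- or $D_6$-Levi one of the $a$-dependent PIMs of Theorem~\ref{thm:D5,d=4} or~\ref{thm:D6,d=4} into the already unitriangular decomposition matrix of $G$, one expects that any $a<2$ forces a negative coefficient somewhere in the expansion against the PIMs listed in Table~\ref{tab:D7,d=4}.

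The step I expect to be most delicate is the (GGGR) analysis for the cuspidal family labelled by~$D_4$: there one has to pick out a unipotent class whose attached generalised Gelfand--Graev character has precisely the required head, apply \cite[Thm.~6.5(ii)]{DLM14} to read off the multiplicities of the characters in that family, and then intersect with the principal block so that what survives combines cleanly with the (Cox) bound. A clean matching between these two constraints is what ultimately fixes every entry in Tables~\ref{tab:D7,d=4} and~\ref{tab:D7,d=4b}, and \emph{a fortiori} forces $a=2$ in the smaller ranks.
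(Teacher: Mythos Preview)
Your outline captures the spirit of the strategy but misplaces two load-bearing steps.

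First, the parameter $a\in\{0,1,2\}$ is \emph{not} resolved in the principal block. In the paper it is killed in part~(b): after (HCi) and (End) produce the columns of Table~\ref{tab:D7,d=4,bl2} (with the $a$-dependent projectives $\Psi_4$ and $\Psi_{13}$ inherited from $D_5$ and $D_6$), one simply inverts the triangular matrix and observes via (Deg) that the tenth Brauer character has positive degree only when $a\ge 2$, hence $a=2$. This argument uses no GGGRs and therefore no parity assumption on~$q$; that is essential, since Theorems~\ref{thm:D5,d=4} and~\ref{thm:D6,d=4} are stated for arbitrary~$q$. Your plan --- pushing an $a$-dependent PIM into the principal block of $D_7$ and hoping for a negative coefficient against the unitriangular basis there --- would at best yield $a=2$ only for $q$ odd, leaving the earlier theorems incomplete. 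Relatedly, the Steinberg PIM of $\SO_{14}^+(q)$ is \emph{cuspidal}: a Sylow $\Phi_4$-subgroup has rank~$3$ here and is not contained in any proper Levi, contrary to what you wrote.

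Second, the claim that (Cox) and (Red) ``ultimately fix every entry'' in Tables~\ref{tab:D7,d=4} and~\ref{tab:D7,d=4b} is false and not what the theorem asserts. Several entries $b_i$, $c$, $d$ remain undetermined; the paper only bounds them, and some of those bounds are obtained by Harish-Chandra \emph{restricting} GGGRs constructed in the overgroups $\SO_{16}^+(q)$ and (for $p>5$) $E_8(q)$ --- an ingredient absent from your plan. The (DL)/(Cox)/(Red) trio does pin down the $c_i$ in column~38 and forces $c_6=c_5+3$, but it does not close out the $b_i$.
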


\begin{table}[htbp]
\caption{$\SO_{14}^+(q)$, $q$ odd, principal block, $(q^2+1)_\ell>5$}  \label{tab:D7,d=4}
$$\vbox{\offinterlineskip\halign{$#$\hfil\ \vrule height9pt depth4pt&&
      \hfil\ $#$\hfil\cr
       .7& 1\cr
      1.6& 1& 1\cr
      2.5& 1& .& 1\cr
     1.51& .& .& 1& 1\cr
    1^2.5& .& 1& .& .& 1\cr
   D_4:3.& .& .& .& .& .& 1\cr
      3.4& 1& 1& 1& .& .& .& 1\cr
     3.31& 1& 1& 1& 1& 1& .& 1& 1\cr
     1.42& .& .& 1& 1& .& .& 1& .& 1\cr
    1.3^2& 1& 1& .& .& .& .& 1& .& .& 1\cr
     2.32& 1& .& 1& 1& .& .& 1& 1& 1& .& 1\cr
 D_4:1^3.& .& .& .& .& .& .& .& .& .& .& .& 1\cr
    .51^2& .& .& .& 1& .& .& .& .& .& .& .& .& 1\cr
     .421& .& .& .& 1& .& .& .& .& 1& .& .& .& 1& 1\cr
    1^3.4& .& .& .& .& 1& .& .& .& .& .& .& .& .& .& 1\cr
  D_4:2.1& .& .& .& .& .& 1& .& .& .& .& .& .& .& .& .& 1\cr
    .3^21& 1& .& .& .& .& .& .& .& .& 1& .& .& .& .& .& .& 1\cr
   21^2.3& .& .& .& 1& 1& .& .& 1& .& .& .& .& 1& .& 1& .& .& 1\cr
D_4:1^2.1& .& .& .& .& .& .& .& .& .& .& .& 1& .& .& .& .& .& .& 1\cr
   1^2.32& 1& 1& .& .& 1& .& 1& 1& .& 1& .& .& .& .& .& .& .& .& .& 1\cr
   1^3.31& .& .& .& .& 1& .& .& 1& .& .& .& .& .& .& 1& .& .& 1& .& 1\cr
    .32^2& .& .& .& .& .& .& .& .& 1& .& 1& .& .& 1& .& .& .& .& .& .\cr
  D_4:1.2& .& .& .& .& .& .& .& .& .& .& .& .& .& .& .& .& .& .& 1& .\cr
   2.2^21& .& .& .& 1& .& .& 1& 1& 1& .& 1& .& 1& 1& .& .& .& 1& .& .\cr
   D_4:.3& .& .& .& .& .& .& .& .& .& .& .& .& .& .& .& .& .& .& .& .\cr
    1.2^3& .& .& .& .& .& .& 1& .& 1& .& 1& .& .& 1& .& .& .& .& .& .\cr
 1^2.2^21& 1& .& .& .& .& .& 1& 1& .& 1& 1& .& .& .& .& .& 1& .& .& 1\cr
    1^4.3& .& .& .& .& .& 2& .& .& .& .& .& .& 1& .& 1& .& .& 1& .& .\cr
   .321^2& .& .& .& .& .& .& .& .& .& 1& .& .& .& .& .& .& 1& .& .& 1\cr
D_4:1.1^2& .& .& .& .& .& .& .& .& .& .& .& .& .& .& .& 1& .& .& .& .\cr
 1.2^21^2& .& .& .& .& .& .& 1& .& .& 1& .& 2& .& .& .& .& 1& .& .& 1\cr
 1^3.21^2& .& .& .& .& .& .& 1& 1& .& .& 1& .& .& .& .& .& .& 1& .& 1\cr
    .31^4& .& .& .& .& .& .& .& .& .& .& .& .& .& .& .& .& .& .& .& 1\cr
   1.21^4& .& .& .& .& .& .& .& .& .& .& .& 2& .& .& .& .& 1& .& 2& 1\cr
    1^5.2& .& .& .& .& .& 2& .& .& .& .& .& .& 1& 1& .& 2& .& 1& .& .\cr
 D_4:.1^3& .& .& .& .& .& .& .& .& .& .& .& .& .& .& .& .& .& .& .& .\cr
  1^3.1^4& .& .& .& .& .& .& 1& .& .& .& 1& 2& .& .& .& .& .& .& .& .\cr
  1^2.1^5& .& .& .& .& .& .& .& .& .& .& 1& 2& .& .& .& .& 1& .& 2& .\cr
    1.1^6& .& .& .& .& .& .& .& .& .& .& 1& .& .& 1& .& 2& .& .& .& .\cr
     .1^7& .& .& .& .& .& .& .& .& .& .& 1& .& .& .& .& .& .& .& .& .\cr
\noalign{\hrule}
  & ps& ps& ps& ps& ps& D_4& ps& ps& ps& ps& ps& D_4& D_3& D_3& ps& D_4& D_3& ps& D_4& ps& \cr
   }}$$
\end{table}

\begin{table}[ht]
\caption{$\SO_{14}^+(q)$, principal block, cntd.}   \label{tab:D7,d=4b}
$$\vbox{\offinterlineskip\halign{$#$\hfil\ \vrule height10pt depth4pt&&
      \hfil\ $#$\hfil\cr
   1^3.31& 1\cr
    .32^2& .& 1\cr
  D_4:1.2& .& .& 1\cr
   2.2^21& .& .& .& 1\cr
   D_4:.3& .& .& 1& .&  1&\cr
    1.2^3& .& 1& .& 1&  b_1& 1 &\cr
 1^2.2^21& .& .& .& .&    .& c& 1\cr
    1^4.3& 1& .& .& .&  b_3& .& .& 1\cr
   .321^2& .& .& .& .&  b_4& .& .& .& 1\cr
D_4:1.1^2& .& .& .& .&  b_5& .& .& .& .& 1\cr
 1.2^21^2& .& .& .& .&  b_6& c& 1& .& 1& .& 1\cr
 1^3.21^2& 1& .& .& 1&  b_7& c& 1& .& .& .& .& 1\cr
    .31^4& 1& .& 2& .&  b_8& .& .& .& 1& .& .& .& 1\cr
   1.21^4& 1& .& 2& .&  b_9& c& 1& .& 1& .& 1& 1& 1& 1\cr
    1^5.2& 1& .& .& 1& b_{10}& .& .& 1& .& .& .& 1& .& .& 1\cr
 D_4:.1^3& .& .& .& .& b_{11}& .& .& .& .& 1& .& .& .& .& .& 1\cr
  1^3.1^4& .& .& .& 1& b_{12}& c\pl1& 1& .& .& .& 1& 1& .& .& .& .& 1\cr
  1^2.1^5& .& .& .& .& b_{13}& c& 1& .& .& .& 1& 1& .& 1& .& .& 1& 1\cr
    1.1^6& .& 1& .& 1& b_{14}& 1& .& .& .& 2& .& 1& .& .& 1& .& 1& d& 1\cr
     .1^7& .& 1& .& .& b_{15}& 1& .& .& .& 2& .& .& .& .& .& 2& 1& d\pl3& 1& 1\cr
\noalign{\hrule}
  & A_3& D_3& D_4& ps& c& A_3D_3\!& ps& .1^4& D_3& D_4& .1^4& A_3& .1^4& .1^4& .1^4& c& A_3& c& .1^4& c\cr
   }}$$
 Here, $b_1,c,d \in \{0,1\}$. Moreover, if $p>5$ then $b_3=b_4=b_5=b_7= 0$
 and $b_6\in\{0,1\}$.
\end{table}

\begin{table}[ht]
\caption{$\SO_{14}^+(q)$, block $\binom{1\,3}{0\,1}$, $(q^2+1)_\ell>5$}   \label{tab:D7,d=4,bl2}
$$\vbox{\offinterlineskip\halign{$#$\hfil\ \vrule height10pt depth4pt&&
      \hfil\ $#$\hfil\cr
    .61&                  \Ph3\Ph6& 1\cr
   21.4&     \hlf q^2\Ph5\Ph6\Ph7\Ph8& 1& 1\cr
    .43&  \hlf q^2\Ph6\Ph7\Ph8\Ph{10}& 1& .& 1\cr
D_4:21.&   \hlf q^2\Ph1^4\Ph3\Ph5\Ph7& .& .& .& 1\cr
 1^2.41&        q^4\Ph3\Ph6^2\Ph7\Ph8& .& 1& .& .& 1\cr
  21.31&   q^5\Ph3\Ph5\Ph6\Ph7\Ph{10}& 1& 1& .& .& 1& 1\cr
  21.22&       q^7\Ph5\Ph7\Ph8\Ph{10}& 1& .& 1& .& .& 1& 1\cr
21.21^2&   q^9\Ph3\Ph5\Ph6\Ph7\Ph{10}& .& .& .& .& 1& 1& 1& 1\cr
  .41^3&        q^{10}\Ph5\Ph8\Ph{10}& .& .& .& .& 1& .& .& .& 1\cr
 2.21^3&     q^{12}\Ph3\Ph6^2\Ph7\Ph8& .& .& .& 2& 1& .& .& 1& 1& 1\cr
 1^4.21&  \hlf q^{14}\Ph5\Ph6\Ph7\Ph8& .& .& .& 2& .& .& 1& 1& .& 1& 1\cr
 .2221&\hlf q^{14}\Ph6\Ph7\Ph8\Ph{10}& .& .& 1& .& .& .& 1& .& .& .& .& 1\cr
D_4:.21&\hlf q^{14}\Ph1^4\Ph3\Ph5\Ph7& .& .& .& .& .& .& .& .& .& .& .& .& 1\cr
  .21^5&               q^{28}\Ph3\Ph6& .& .& .& .&.& .& 1& .& .& .& 1& 1& 2& 1\cr
\noalign{\hrule}
 \omit& & ps& ps& ps& D_4& ps& ps& ps& ps& D_3& .1^4& A_3& D_3& D_4& .1^4\cr
   }}$$
Here, all character degrees have been divided by $q^2\Ph4\Ph{12}$.
\end{table}

\begin{proof}
Let us first consider the block with defect $\Phi_4^2$. We argue how to
construct projectives $\Psi_1,\ldots,\Psi_{14}$ with unipotent part equal to
the columns in Table~\ref{tab:D7,d=4,bl2}. (HCi) and (End)
give all $\Psi_i$, except that instead of $\Psi_4$ and $\Psi_{13}$ we find
$\Psi_4-(2-a)(\Psi_{10}-\Psi_{14})$ and $\Psi_{13}+(2-a)(\Psi_{10}-\Psi_{14})$,
with $a\in\{1,2\}$ as in the proof of Theorem~\ref{thm:D6,d=4}. Again,
it is easily seen by (HCr) that all these characters are indecomposable. Now
the tenth Brauer character has positive degree only if $a\ge2$, which shows by
(Deg) that $a=2$ in this table and also in the decomposition matrices for
$\SO_{10}^+(q)$ and $\SO_{12}^+(q)$, thus completing the proofs of
Theorems~\ref{thm:D5,d=4} and~\ref{thm:D6,d=4}. Note that up to this point we
did not use any assumption on $q$.
\par
We now turn to the principal block. Here, (HCi) yields
$\Psi_i$ except for $i\in\{2,14,16,19,$ $24,25,26,34,35,36,38,40\}$.
Using (Sum) we get other projective characters:
$\Psi_2+\Psi_3$ and $\Psi_2+\Psi_4+\Psi_{10}$ give $\Psi_2$,
$\Psi_{14}+\Psi_{17}$ and $\Psi_{14}+\Psi_{29}$ give $\Psi_{14}$,
$\Psi_6+\Psi_{19}$ and $\Psi_{19}+\Psi_{30}$ give $\Psi_{19}$,
$\Psi_{12}+\Psi_{16}$ and $\Psi_{16}+\Psi_{19}$ give $\Psi_{16}$,
$\Psi_9+\Psi_{20}+\Psi_{24}$ and $\Psi_{24}+\Psi_{27}$ give $\Psi_{24}$,
$\Psi_{31}+\Psi_{35}$ and $\Psi_{33}+\Psi_{35}$ give $\Psi_{35}$,
$\Psi_{28}+\Psi_{34}$ and $\Psi_{34}+\Psi_{35}$ give $\Psi_{34}$.
Furthermore we find $\Psi_{26}$ with $c=1$. The PIM $\Psi_{40}$ is cuspidal by
(St). (HCr) shows that all of the projectives obtained so far, with the
possible exception of $\Psi_{26}$, which might contain $\Psi_{27}$ once, are
indecomposable. We have thus obtained all but three columns of the
decomposition matrix. Since we have accounted for all proper Harish-Chandra
series, the remaining three Brauer characters must be cuspidal.
\par
To establish the unitriangularity we  look at suitably chosen generalized
Gelfand--Graev representations and use (GGGR). Note that we have to assume
that $q$ is odd in order to construct these representations and use the results
in \cite{Tay14}. Let us first consider the family
$\cF = \{\rho_{1^5.2}, \rho_{1.21^4},\rho_{.2^21^3},\rho_{D_4:.1^3}\}$
of unipotent characters. The special character of this family is
$\rho_{1.21^4}$; via the Springer correspondence, it corresponds to a special
unipotent class, and we denote by $\cO$ its dual. By \cite[Thm.~14.10]{Tay14},
the character of any GGGR attached to $\cO$ involves characters lying in $\cF$
or in a family with a strictly larger $a$-value than that of $\cF$.
In particular, the only characters in the block that can occur are
$\rho_{1^5.2}, \rho_{1.21^4},\rho_{D_4:.1^3},\rho_{1^3.1^4},\rho_{1^2.1^5},
\rho_{1.1^6}$ and $\rho_{.1^7}$, which gives an approximation of $\Psi_{36}$.
Furthermore, $u\in\cO^F$ satisfies the following two conditions:
\begin{itemize}
 \item the small finite group attached to the family as in \cite[Chap.~4]{Lu84}
  and the component group $A_G(u):=(C_\bG(u)/C_\bG(u)^\circ)^F$ are isomorphic
  (to $\ZZ/2\ZZ$),
 \item at most one of the local systems on $(u)$ is not in the principal block,
\end{itemize}
in which case one can apply \cite[Thm.~6.5(ii)]{DLM14} to compute the projection
of the GGGR $\Gamma_u$ into the span of $\cF$. Recall that the characters in
a family are parametrized by pairs $(g,\psi)$ where $g$ runs over a set of
representatives of conjugacy classes of the small finite group attached to
the family, say $\cG$ and $\psi \in \Irr(C_\cG(g))$. For $g \in \cG$,
the Mellin transform of the pair $(g,1)$ is given by
$$ \mu_{(g,1)} = \sum_{\psi \in \Irr(C_\cG(g))} \psi(g)
   \rho_{(g,\psi)}.$$
In particular, the small finite group for the dual family of $\cF$ is
$\ZZ/2\ZZ$ and the Mellin transforms of $(1,1)$ and $(-1,1)$ are
$$\begin{aligned}
\mu_{(1,1)} = \rho_{(1,1)} + \rho_{(1,\vareps)} = \rho_{1.51} + \rho_{1^2.5}, \\
\mu_{(-1,1)} = \rho_{(-1,1)} - \rho_{(-1,\vareps)} = \rho_{.52} - \rho_{D_4:3.},\\
\end{aligned}$$
where $\vareps$ denotes the non-trivial character of $\ZZ/2\ZZ$.
By \cite[Thm.~6.5(ii)]{DLM14}, the projections to $\cF$ of the two GGGRs attached
to $\cO$ are given by the Alvis--Curtis duals of these characters, that is by
$$\rho_{1.21^4} + \rho_{1^5.2} \quad \text{and} \quad
  \rho_{.2^21^3} + \rho_{D_4:.1^3}.$$
Taking the second GGGR and cutting by the block, we obtain a projective
character whose unipotent part is given by
$\rho_{D_4:.1^3}+c_1\rho_{1^3.1^4}+c_2\rho_{1^2.1^5}+c_3\rho_{1.1^6}
+c_4\rho_{.1^7}$.
Note that $c_1$ is actually zero  by \cite[Thm.~14.10]{Tay14} since $\rho_{1^3.1^4}$
and $\rho_{D_4:.1^3}$ have the same $a$-value but lie in different families.
\par
For $\Psi_{25}$ and $\Psi_{38}$ we consider the GGGRs associated with the
families $\{\rho_{1^2.2^21},\rho_{1^3.2^2},\rho_{1.2^3},$ $\rho_{D_4:.3}\}$
and $\{\rho_{1^2.1^5}\}$, from which we deduce that the decomposition matrix
is unitriangular. Moreover, if we denote by $(b_i)_{i=1,\ldots,15}$
(resp.~$c_5,c_6$) the unknown entries in the 25th (resp.~38th) column then
\cite[Thm.~6.5]{DLM14} yields $b_2 = 0$.
\par
The unipotent part of the Gelfand-Graev representation of $\SO_{16}^+(q)$
associated with the family $\{\rho_{1^2.1^6}\}$, cut by the principal block,
is of the form $\rho_{1^2.1^6} + \alpha \rho_{2.1^6} + \beta \rho_{.1^8}$ for
suitable $\alpha,\beta\ge0$. The Harish-Chandra restriction of this character
to $\SO_{14}^+(q)$, cut by the principal block, equals
$\rho_{1^2.1^5} + \rho_{1.1^6} + (\alpha + \beta) \rho_{.1^7}$
and thus forces $c_5 \leq 1$.
\par
If moreover $p>5$ we may also consider a GGGR associated to the family
$\{\phi_{700,42},$ $ \phi_{400,43},\phi_{300,44},D_4:\phi_{1,12}''\}$
of $E_8$, whose projection on the family is $\phi_{400,43}+D_4:\phi_{1,12}''$.
By \cite[Thm.~14.10]{Tay14}, the only other unipotent characters lying in the principal
$\ell$-block which can occur as constituents are $\phi_{1,120}, \phi_{35,74},
\phi_{50,56},\phi_{210,52},\phi_{567,46},\phi_{112,63}$ and $D_4:\phi_{1,24}$.
As above, the Harish-Chandra reduction of the GGGR gives upper bounds for
some of the $b_i$'s, namely $b_3=b_4=b_5 = b_7= 0$ and $b_6 \leq 1$
\par
We now use (DL) to obtain relations on the other decomposition numbers.
Let $w$ be a Coxeter element. For $v < w$, one checks easily that
the characters $\Psi_{38}$, $\Psi_{39}$ and $\Psi_{40}$ do not occur
in $R_v$. Therefore the computation of $R_w$ yields by (DL) three inequalities
which are $ -c_2 \geq 0$,  $c_2c_5 -c_3 \geq 0$ and
$2+c_3-c_4+c_2(c_6-c_5) \geq 0$. This forces $c_2 = c_3 = 0$ and
$c_4 \leq 2$. We use (Red) to prove that $c_4 = 2$. More precisely, we consider
the $\ell$-reduction of a non-unipotent character which is obtained by inducing
an $\ell$-character in general position of a $\Phi_4$-torus (of order
$(q+1)(q^2+1)^3$). Such a character exists whenever $(q^2+1)_\ell>12$, which
automatically holds if $(q^2+1)_\ell>5$.
This yields the relations $c_4 \geq 2$ and $c_6 \geq c_5+3$, so that $c_4 = 2$.
In particular, none of $\Psi_{38}$, $\Psi_{39}$ and $\Psi_{40}$ occur in $R_w$.
\par
Finally, we use (DL) with  $w'= s_1s_3s_1s_2s_3s_4s_5s_6s_7$, where
$s_1,\ldots,s_7$ are the simple reflections ordered as in \Chevie\ (so the
end nodes are $1,2$ and~$7$), and we find
$c_5+3 \geq c_6$, so that $c_6 = c_5+3$. Note that one relation on the $b_i$'s
can also be obtained. The resulting decomposition matrix is given in
Table~\ref{tab:D7,d=4b}, where $c_5\in\{0,1\}$ is simply denoted by $d$.
\end{proof}

\begin{rem}   \label{rem:kawanaka}
Under some assumptions on the special unipotent class $\cO$, Kawanaka
conjectured in \cite{Ka87} that one can decompose any GGGR associated with $\cO$
into a sum of projective characters, each of which contains only one unipotent
character of the family. If Kawanaka's conjecture holds for the family
$\{1^2.2^21,1^3.2^2,1.2^3,D_4:.3\}$ of $\SO_{14}(q)$, then $b_1$ and $d$ must
be equal to zero. More generally, as suggested by Geck, Kawanaka's characters
should force block unitriangularity of the decomposition matrix whenever $p$ is
good.
\end{rem}

Our method is not sufficient to determine all the decomposition numbers
of $\SO_{14}^+(q)$. However, we can use \cite[Conj.~1.2]{DM14} to determine
small upper bounds for the missing entries. Following \cite{DM14}, we denote
by $Q_w$ the virtual character afforded by the Alvis--Curtis dual of the
intersection cohomology of the Deligne--Lusztig variety corresponding to $w$.
In addition, if $\lambda \in \overline{\FF}_\ell$ we consider the virtual
character $Q_w[\lambda]$ afforded by the generalized $\lambda$-eigenspace of
the Frobenius on the intersection cohomology. Up to a sign, $Q_w[\lambda]$ is
a proper character and Conjecture~1.2 in \cite{DM14} predicts that it is
actually the unipotent part of a projective character. The multiplicities of
the various PIMs in $Q_w[\lambda]$ depend on the decomposition numbers
(including the missing entries), forcing some linear combinations of
decomposition numbers to be non-negative.

\begin{prop} \label{prop:conjForD7}
 Assume Conjecture~1.2 in \cite{DM14} holds. Then in the decomposition matrix
 of the principal $\Phi_4$-block of $\SO_{14}^+(q)$, we have
 $b_1 =b_3=b_4=b_5=b_8 =0$, $b_6,b_7,b_{10},b_{11} \leq 2$, $b_9 \leq 6$,
 $b_{12},b_{13} \leq 12$, $b_{14} \leq 18$ and $b_{15}\leq 20$.
\end{prop}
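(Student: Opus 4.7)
The plan is to exploit \cite[Conjecture 1.2]{DM14} systematically: for a carefully chosen list of pairs $(w,\lambda)$ with $w\in W$ and $\lambda\in\overline{\FF}_\ell$, I would compute the virtual character $Q_w[\lambda]$ and extract the positivity constraints on the multiplicity of each PIM $\Psi_j$ in its expansion. First I would record the $40$ PIMs of the principal block explicitly as combinations of unipotent characters, keeping $b_1,\ldots,b_{15}, c, d$ as formal parameters, and invert the resulting unitriangular matrix. This expresses every unipotent character in the basis of PIMs, with coefficients that are affine-linear in the unknowns.

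Next, the virtual character $Q_w$ and its $\lambda$-eigenspace $Q_w[\lambda]$ are computed in \Chevie\ as integer combinations of unipotent characters; after the change of basis, the coefficients of $\Psi_1,\ldots,\Psi_{40}$ in $Q_w[\lambda]$ become explicit linear expressions in the unknowns. Under the conjecture, each such expression must be non-negative, yielding a system of linear inequalities. The $\lambda=1$ part of $Q_w$ is already essentially controlled by (Cox) and (DL) as used in the proof of Theorem~\ref{thm:D7,d=4}; the genuinely new information comes from the pieces $Q_w[\lambda]$ with $\lambda\ne 1$, where the cuspidal PIMs $\Psi_{25}, \Psi_{29}, \Psi_{35}$ and $\Psi_{39}$ (supported on the rows $\rho_{D_4:.3}, \rho_{D_4:1.2}, \rho_{D_4:1.1^2}$ and $\rho_{D_4:.1^3}$) typically appear with small coefficients involving the $b_i$'s.

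The key step is the choice of $w$. Natural candidates are elements of $4$-regular type, twisted Coxeter elements of proper standard parabolic subgroups, and products of a Coxeter element of a Levi of type $D_6$ or $A_6$ with well-chosen short elements; for these, the Frobenius eigenvalues on the $\ell$-adic intersection cohomology can be read off from the generic degrees, and the $Q_w[\lambda]$ split into many pieces, producing numerous independent inequalities. One then performs linear algebra to intersect the resulting polyhedral cone with the constraints already known from (GGGR), (DL) and (Red) in Theorem~\ref{thm:D7,d=4}, hoping that it collapses onto the announced bounds $b_1=b_3=b_4=b_5=b_8=0$, $b_6,b_7,b_{10},b_{11}\le 2$, and so on.

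The main obstacle is not conceptual but computational and combinatorial: one must find a small collection of pairs $(w,\lambda)$ whose inequalities, taken together, are sharp enough to force the stated bounds on all fifteen entries simultaneously. I expect the tightest bounds, in particular those on $b_{14}$ and $b_{15}$, to arise from elements $w$ whose cohomology carries many distinct Frobenius eigenvalues, since a finer $\lambda$-decomposition provides strictly more constraints than the total character $Q_w$ alone; identifying such $w$ and verifying the resulting inequalities is the delicate part of the argument.
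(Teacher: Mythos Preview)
Your overall strategy is exactly the one used in the paper: compute $Q_w[\lambda]$ for well chosen $(w,\lambda)$, express them in the basis of PIMs, and read off linear inequalities in the $b_i$ from the conjectural non-negativity. However, two points need correcting.

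First, you conflate $Q_w$ with $R_w$. The characters $Q_w[\lambda]$ come from the Alvis--Curtis dual of the \emph{intersection} cohomology of $X(w)$, not from the ordinary $\ell$-adic cohomology that yields the Deligne--Lusztig character $R_w$; in particular, $Q_w[1]$ is \emph{not} already controlled by (Cox) and (DL). The conjectural projectivity of $Q_w[\lambda]$ is genuinely new information even for $\lambda=1$, and in fact almost all of the bounds in the proposition come from the $\lambda=1$ piece: the paper uses $Q_{w_2}[1]$ with $w_2=s_1s_2s_3s_1s_2s_3s_5s_6s_5s_7s_6s_5s_4$ to obtain $b_3=b_4=b_5=b_8=0$, $b_6,b_7\le 2$, and then, by a cascade of inequalities from the coefficients of $\Psi_{33},\ldots,\Psi_{40}$ in the same $Q_{w_2}[1]$, all the remaining bounds on $b_9,\ldots,b_{15}$. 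Only the single bound $b_1=0$ comes from a non-trivial eigenvalue, namely from the coefficient $10-14b_1$ of $\Psi_{26}$ in $Q_{w_1}[q^3]$ with $w_1=s_1s_2s_3s_1s_2s_3s_5s_4s_3s_6s_5s_4s_3s_7s_6s_5s_4s_3$. So your expectation that the sharp information lives at $\lambda\ne 1$ is backwards here.

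Second, what you have written is a programme, not a proof: the entire content of the proposition is the identification of the two elements $w_1,w_2$ above and the explicit computation of the coefficients. Your list of candidate elements (4-regular elements, twisted Coxeter elements, etc.) does not obviously contain either of these, and until you actually produce specific $(w,\lambda)$ and verify the resulting inequalities, nothing has been proved. (Incidentally, the cuspidal PIMs in this block are $\Psi_{25},\Psi_{36},\Psi_{38},\Psi_{40}$, not the ones you list.)
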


\begin{proof}
To obtain the upper bounds on the $b_i$'s we consider the characters
$Q_{w_1}[q^3]$ for
$w_1 = s_1s_2s_3s_1s_2s_3s_5s_4s_3s_6s_5 s_4s_3s_7s_6s_5s_4s_3$ and
$Q_{w_2}[1]$ for $w_2 = s_1s_2s_3s_1s_2s_3s_5s_6s_5s_7s_6s_5s_4$ as well as
their decomposition on the basis of PIMs.
\par
The coefficient of $\Psi_{26}$ in $Q_{w_1}[q^3]$ is $10-14b_1$. By
\cite[Conj.~1.2]{DM14} it must be non-negative, which forces $b_1 = 0$.
The list of coefficients of $\Psi_{27},\ldots,\Psi_{32}$ on $Q_{w_2}[1]$ is
given by $-7b_3,\ -7b_4,\ -7b_5,\, 17+7b_4-7b_6,\, 17-7b_7$ and $7b_4-7b_8$.
Since
they must be all non-negative, we get $b_3=b_4=b_5=b_8=0$ and $b_6,b_7 \leq 2$.
With these values, the coefficients of $\Psi_{33},\ldots,\Psi_{40}$ on $Q_{w_2}[1]$ are
$$\begin{aligned}
  & 14+7b_6+7b_7-7b_9,\, 7b_7-7b_{10},\, 19-7b_{11},\\
  & 62+7b_6+7b_7-7b_{12},\, -9-7b_6-7b_7+7b_9+7b_{12}-7b_{13},\\
  & 60-7b_6-7b_7+7b_{10}+7b_{12}-7b_{14}+d(9+7b_6+7b_7-7b_9-7b_{12}+7b_{13}),\\
  & 42+21b_6+21b_7-21b_9-7b_{10}+14b_{11}-21b_{12}+21b_{13}+7b_{14}-7b_{15}.\\
\end{aligned}$$
>From the first line we deduce $b_9 \leq b_6 +b_7 +2$, $b_{10} \leq b_7$ and
$b_{11} \leq 2$. From the second, $b_{12} \leq b_6+b_7+8$ and
$b_{13} \leq -2-b_6-b_7+b_9+b_{12}$, so that $b_{13} \leq b_{12}$.
Finally, the last two lines yield $b_{14} \leq 8-b_6-b_7+b_{10}+b_{12}
\leq 16 + b_{10}$ and $b_{15} \leq 6+3b_6+3b_7-3b_9-b_{10}+2b_{11}-3b_{12}+3b_{13}+b_{14}
\leq 16+2b_{11}$.
\end{proof}

\subsection{Decomposition matrices for $\SO_{16}^+(q)$}
Finally, we consider the three blocks of $\Phi_4$-defect $\Phi_4^2$ for the
groups $G=\SO_{16}^+(q)$. They are again labelled by the symbol for their
$4$-Harish-Chandra source in a $4$-split Levi subgroup of type $\SO_8^+(q)$.

\begin{thm}   \label{thm:D8,d=4}
 The $\ell$-modular decomposition matrices for the unipotent blocks of
 $\SO_{16}^+(q)$ of defect $\Phi_4^2$, for primes $\ell$ with $(q^2+1)_\ell>5$,
 are as given in Tables~\ref{tab:D6,d=4,bl1}, \ref{tab:D6,d=4,bl3},
 \ref{tab:D8,d=4,bl2} and~\ref{tab:D8,d=4,bl3}.  \par
 The two blocks of $\SO_{12}^+(q)$ and $\SO_{16}^+(q)$ in
 Table~\ref{tab:D6,d=4,bl1} are Morita equivalent, as well as the two blocks
 in Table~\ref{tab:D6,d=4,bl3}.
\end{thm}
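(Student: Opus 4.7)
The plan is to treat each of the three defect-$\Phi_4^2$ blocks separately. The two whose decomposition matrices appear in Tables~\ref{tab:D6,d=4,bl1} and~\ref{tab:D6,d=4,bl3} will be obtained by transporting the computations already made for $\SO_{12}^+(q)$ in Theorem~\ref{thm:D6,d=4} via a Morita equivalence, while the third block, whose matrix is spread across Tables~\ref{tab:D8,d=4,bl2} and~\ref{tab:D8,d=4,bl3}, will be computed directly following the template of Theorem~\ref{thm:D7,d=4}.

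For the two Morita equivalences, the first step is to identify the corresponding $4$-cuspidal pairs $(L,\lambda)$ in $G=\SO_{16}^+(q)$, with $L$ a $4$-split Levi subgroup of type $\SO_8^+(q)$, and to check that their relative Weyl groups $W_G(L,\lambda)$ are isomorphic---as Coxeter groups acting on the attached families of cuspidal pairs---to those of the analogous $4$-cuspidal pairs in $\SO_{12}^+(q)$ with Levi of type $\SO_4^+(q)$. Since the defect groups are in each case a Sylow $\Phi_4$-subgroup, hence abelian and isomorphic on both sides, a standard $d$-Harish-Chandra theoretic Morita equivalence (as in the work of Brou\'e--Malle--Michel and of Bonnaf\'e--Dat--Rouquier) then gives an equivalence between the block of $\SO_{16}^+(q)$ and the corresponding block of $\SO_{12}^+(q)$ that preserves the unipotent basic set. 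The decomposition matrices therefore coincide with those computed in Theorem~\ref{thm:D6,d=4}, the value $a=2$ having been pinned down in Theorem~\ref{thm:D7,d=4}.

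For the third block I would apply the full toolkit of Section~\ref{sec:methods} as in the $D_7$ proof. Most columns are obtained via (HCi) from proper Levi subgroups of types $D_7$ and $D_6\times A_1$, whose decomposition matrices are known by Theorems~\ref{thm:D6,d=4} and~\ref{thm:D7,d=4}; (End) fixes the principal-series columns via the Hecke algebra of type $D_8$ at a fourth root of unity, and (Sum) together with (HCr) separate summands and verify indecomposability. The Steinberg PIM is produced by (St), and the columns attached to cuspidal unipotent Brauer characters are controlled by (GGGR) combined with the projection formula of \cite[Thm.~6.5(ii)]{DLM14} applied to each relevant family. Any residual unknown entries are then squeezed between the upper bounds from (Cox) or (DL) with a suitably chosen minimal $w$, and the lower bounds from (Red) applied to $\ell$-characters in general position of an appropriate $\Phi_4$-torus.

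The main obstacle is verifying the precise hypotheses of the Morita equivalence: one has to check that the natural bijection between the two sets of $4$-cuspidal pairs intertwines the actions of the respective relative Weyl groups and preserves modular Harish-Chandra series, so that the identification of decomposition matrices is not just numerical but block-theoretic. Once this is in place, the third block reduces to a larger but structurally identical version of the $D_7$ principal-block computation, in which a few indeterminates may well survive under the current methods, exactly as in Theorem~\ref{thm:D7,d=4}.
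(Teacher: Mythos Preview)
Your proposal misreads the block structure and significantly overshoots on methodology.

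First, Tables~\ref{tab:D8,d=4,bl2} and~\ref{tab:D8,d=4,bl3} are two \emph{separate} blocks (labelled $\binom{1\,2}{0\,3}$ and $\binom{2\,3}{0\,1}$), each with fourteen unipotent characters, not one block spread over two tables; compare the entry ``1--4'' for $D_8$ in Table~\ref{tab:4-blocks}. So there are four blocks of defect $\Phi_4^2$, two of which are handled via the Morita equivalences and two which must be computed directly.

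Second, your route to the Morita equivalences is unnecessarily heavy and not obviously applicable. The paper does not invoke $d$-Harish-Chandra theory or Bonnaf\'e--Dat--Rouquier. Instead it observes that \emph{ordinary} Harish-Chandra induction from the standard Levi $\SO_{12}^+(q)\le\SO_{16}^+(q)$ already sends the fourteen PIMs of the $\SO_{12}^+$-block bijectively to the fourteen listed projectives of the $\SO_{16}^+$-block, and sends irreducibles to irreducibles. Brou\'e's criterion \cite[Thm.~0.2]{Br90} then yields the Morita equivalence directly. Your approach, besides requiring results not available at the time, compares $4$-cuspidal pairs with Levis of type $D_4$ versus type $D_2$ in two different ambient groups, and it is not clear which theorem in the literature would furnish a Morita equivalence from that comparison alone.

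Third, for the two remaining blocks the full toolkit is not needed. These are non-principal blocks of non-maximal defect: there are no cuspidal Brauer characters and no Steinberg column, so (GGGR), (St), (Cox), (DL) and (Red) never enter. In the paper's proof every column comes from (HCi) from Levi subgroups of types $D_7$ and $D_6A_1$, with a handful of missing columns recovered by (Sum); indecomposability is then (HCr). In particular, contrary to your expectation, \emph{no} indeterminates survive: both matrices are completely determined.
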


\begin{table}[ht]
\caption{$\SO_{16}^+(q)$, block~$\binom{1\,2}{0\,3}$, $(q^2+1)_\ell>5$}   \label{tab:D8,d=4,bl2}
$$\vbox{\offinterlineskip\halign{$#$\hfil\ \vrule height10pt depth4pt&&
      \hfil\ $#$\hfil\cr
   1^2.6&                    \hlf \Ph8\Ph{10}& 1\cr
    2.51&               \hlf q\Ph5\Ph8\Ph{14}& .& 1\cr
    3.41&    \hlf q^2\Ph2^4\Ph6\Ph{10}\Ph{14}& 1& 1& 1\cr
    2.42&    \hlf q^3\Ph3^2\Ph8\Ph{10}\Ph{14}& .& 1& 1& 1\cr
 1^2.3^2&  \hlf q^6\Ph5\Ph6\Ph8\Ph{10}\Ph{14}& 1& .& 1& .& 1\cr
   .42^2&        \hlf q^7\Ph8^2\Ph{10}\Ph{14}& .& .& .& 1& .& 1\cr
D_4:1^2.2&\hlf q^8\Ph1^4\Ph3^2\Ph5\Ph6\Ph{14}& .& .& .& .& .& .& 1\cr
 2.2^3& \hlf q^{10}\Ph5\Ph6\Ph8\Ph{10}\Ph{14}& .& .& 1& 1& .& 1& .& 1\cr
D_4:2.1^2& \hlf q^{10}\Ph1^4\Ph3^2\Ph5\Ph{10}& .& .& .& .& .& .& .& .& 1\cr
 .3^21^2&     \hlf q^{11}\Ph8^2\Ph{10}\Ph{14}& .& .& .& .& 1& .& .& .& .& 1\cr
1^2.2^21^2&\hlf q^{15}\Ph3^2\Ph8\Ph{10}\Ph{14}&.& .& 1& .& 1& .& .& .& .& 1& 1\cr
1^3.21^3& \hlf q^{18}\Ph2^4\Ph6\Ph{10}\Ph{14}& .& .& 1& .& .& .& .& 1& .& .& 1& 1\cr
1^2.21^4&          \hlf q^{21}\Ph5\Ph8\Ph{14}& .& .& .& .& .& .& 2& .& .& 1& 1& 1& 1\cr
   1^6.2&              \hlf q^{28}\Ph8\Ph{10}& .& .& .& .& .& 1& .& 1& 2& .& .& 1& .& 1\cr
\noalign{\hrule}
 \omit& & ps& ps& ps& ps& ps& D_3& D_4& ps& D_4& D_3& ps& A_3& .1^4& .1^4 \cr
   }}$$
Here, all character degrees have been divided by $q^3\Ph4^2\Ph7\Ph{12}$.
\end{table}

\begin{table}[ht]
\caption{$\SO_{16}^+(q)$, block~$\binom{2\,3}{0\,1}$, $(q^2+1)_\ell>5$}   \label{tab:D8,d=4,bl3}
$$\vbox{\offinterlineskip\halign{$#$\hfil\ \vrule height10pt depth4pt&&
      \hfil\ $#$\hfil\cr
     .62&                     \hlf \Ph5\Ph8& 1\cr
     .53&             \hlf q\Ph7\Ph8\Ph{10}& 1& 1\cr
D_4:2^2.&        \hlf q^2\Ph1^4\Ph3\Ph5\Ph7& .& .& 1\cr
   2^2.4&        \hlf q^3\Ph5\Ph6^2\Ph7\Ph8& 1& .& .& 1\cr
  2^2.31&   \hlf q^6\Ph3\Ph5\Ph7\Ph8\Ph{10}& 1& 1& .& 1& 1\cr
1^2.41^2&            \hlf q^7\Ph5\Ph7\Ph8^2& .& .& .& 1& .& 1\cr
21.31^2&\hlf q^8\Ph2^4\Ph3\Ph6^2\Ph7\Ph{10}& .& .& .& 1& 1& 1& 1\cr
  1.41^3&\hlf q^{10}\Ph2^4\Ph5\Ph6^2\Ph{10}& .& .& .& .& .& 1& .& 1\cr
21^2.2^2&\hlf q^{10}\Ph3\Ph5\Ph7\Ph8\Ph{10}& .& 1& .& .& 1& .& 1& .& 1\cr
  2.31^3&         \hlf q^{11}\Ph5\Ph7\Ph8^2& .& .& 2& .& .& 1& 1& 1& .& 1\cr
 1^4.2^2&     \hlf q^{15}\Ph5\Ph6^2\Ph7\Ph8& .& .& 2& .& .& .& 1& .& 1& 1& 1\cr
D_4:.2^2&     \hlf q^{18}\Ph1^4\Ph3\Ph5\Ph7& .& .& .& .& .& .& .& .& .& .& .& 1\cr
 .2^31^2&        \hlf q^{21}\Ph7\Ph8\Ph{10}& .& 1& .& .& .& .& .& .& 1& .& .& .& 1\cr
 .2^21^4&               \hlf q^{28}\Ph5\Ph8& .& .& .& .& .& .& .& .& 1& .& 1& 2& 1& 1\cr
\noalign{\hrule}
 \omit& & ps& ps& D_4& ps& ps& ps& ps& D_3& ps& .1^4& A_3& D_4& D_3& .1^4 \cr
   }}$$
Here, all character degrees have been divided by $q^3\Ph4^2\Ph{12}\Ph{14}$.
\end{table}

\begin{proof}
Harish-Chandra induction sends the fourteen PIMs in the principal block
of $\SO_{12}^+(q)$ to the fourteen listed projectives of the first block of
$\SO_{16}^+(q)$ in Table~\ref{tab:D6,d=4,bl1}, and it sends irreducible
characters to irreducible characters. Thus, by \cite[Thm.~0.2]{Br90} those
two blocks are Morita equivalent. Exactly the same assertions hold for
the third block of $\SO_{12}^+(q)$ and the fourth block of $\SO_{16}^+(q)$
in Table~\ref{tab:D6,d=4,bl3}.
\par
In the second block of $G$, (HCi) yields all columns in
Table~\ref{tab:D8,d=4,bl2}, except for the second one. This is then obtained
from the projectives $\Psi_1+\Psi_2$ and $\Psi_2+\Psi_4$ via (Sum).  \par
In the third block, we obtain all $\Psi_i$ in Table~\ref{tab:D8,d=4,bl3} for
$i\notin\{1,5,6,9\}$.
Then, using (Sum) $\Psi_4+\Psi_6$ and $\Psi_6+\Psi_7$ give $\Psi_6$,
$\Psi_2+\Psi_9$ and $\Psi_7+\Psi_9+\Psi_{11}$ give $\Psi_9$,
$\Psi_4+\Psi_5$ and $\Psi_5+\Psi_9$ give $\Psi_5$, and finally
$\Psi_1+\Psi_2$ and $\Psi_1+\Psi_5$ give $\Psi_1$.
\end{proof}

\section{Decomposition matrices for $E_6(q)$}

We now turn to decomposition matrices for the exceptional Lie type groups.
We first consider $G=E_6(q)$ for primes $\ell|\Phi_4(q)$ in which case the
Sylow $\ell$-subgroups are
abelian homocyclic of rank~2. Note that again we do not need and will not
specify the isogeny type of $G$, since the decomposition numbers of the
unipotent characters will not depend on such a choice.
The group $E_6(q)$ has ten unipotent blocks of $\ell$-defect zero, one of
cyclic defect and the principal block containing 16 unipotent characters.
\par
Here, the decomposition matrix of the principal block has been
determined by Miyachi \cite[Thm.~37]{Mi08} except for three missing entries,
which coincide with entries of the decomposition matrix for $D_4(q)$, again
under the assumption that $q$ is a power of a good prime. We give an
independent proof of his result, valid for all prime powers $q$, and find
the remaining entries using Theorem~\ref{thm:D4,d=4}:

\begin{prop}   \label{prop:E6,d=4}
 Let $\ell$ be a prime. Then the $\ell$-modular decomposition matrices for
 the unipotent blocks of $E_6(q)$ of positive defect, for $(q^2+1)_\ell>5$,
 are as given in Tables~\ref{tab:E6,d=4} and~\ref{tab:E6,d=4,def1}.
 In particular, the three undetermined entries in the $\ell$-modular
 decomposition matrix of $E_6(q)$ in \cite[Thm.~37]{Mi08} are all equal to~2.
\end{prop}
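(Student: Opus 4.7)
The plan is to follow the strategy of Section~\ref{sec:methods}, combining the basic tools (HCi), (End), (HCr), (St), (Csp) and (Sum) with the new input from Theorem~\ref{thm:D4,d=4}. The $\ell$-block of cyclic defect has a Brauer tree that is determined by standard techniques for blocks of cyclic defect (cf.\ \cite{FS2}), so the real work concerns the principal $\ell$-block, which contains $16$ unipotent characters.

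First I would construct projective characters from several sources. Harish-Chandra induction (HCi) from the maximal Levi subgroups of types $D_5$ and $A_5$, whose principal-block PIMs are already known by Theorem~\ref{thm:D5,d=4} and by the standard theory for linear groups, produces a large batch of columns. The principal-series PIMs are pinned down by (End) via the decomposition numbers of the Hecke algebra of $G$ of type $E_6$ at a fourth root of unity. A centralizer calculation shows that a Sylow $\ell$-subgroup of $G$ lies in a Levi subgroup of type $D_4$, so the cuspidal unipotent Brauer characters of $G$ are constrained by (Csp), while (St) exhibits the Steinberg PIM of $G$ as a direct summand of the Harish-Chandra induction of the Steinberg PIM of that $D_4$-type Levi.

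The crucial new input, unavailable to Miyachi, is the fact from Theorem~\ref{thm:D4,d=4} that the PIM of the standard Levi subgroup $L$ of type $D_4$ lifting the cuspidal unipotent character has unipotent part $\rho_{D_4}+2\rho_{.1^4}$. Harish-Chandra induction of this PIM from $L$ to $G$ yields a projective character of $G$ whose unipotent part carries the entry ``$2$'' through induction. Checking indecomposability by (HCr) and peeling off summands via (Sum) by comparison with projectives already obtained by (HCi) then isolates the PIMs in the $D_4$-modular Harish-Chandra series of $G$. Matching these against \cite[Thm.~37]{Mi08}, they correspond exactly to the columns whose three entries were left undetermined, and each of those entries must consequently equal~$2$.

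Indecomposability of every tentative PIM produced above is then verified by (HCr): no proper subcharacter restricts to a non-negative combination of PIMs on every proper Levi subgroup. The main obstacle I anticipate is bookkeeping rather than conceptual, namely keeping track of the various modular Harish-Chandra series and making sure that the restriction data on the Levi subgroups of types $D_5$, $A_5$, $D_4$ and $A_4$ are jointly consistent with the splittings claimed by (Sum); this is a finite verification that can be carried out in \Chevie{}. Once completed, together with the Brauer tree of the cyclic-defect block, the proposition is established uniformly in $q$, with no restriction to powers of good primes of the sort needed in \cite{Mi08}.
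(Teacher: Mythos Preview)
Your proposal is correct and matches the paper's approach: the three undetermined entries are fixed by Harish-Chandra inducing the $D_4$-series PIMs (now known to carry the entry ``$2$'' by Theorem~\ref{thm:D4,d=4}) and checking indecomposability via (HCr). One small refinement: the paper inducts from $D_5$ rather than $D_4$ (obtaining $\Psi_5+\Psi_{10}$ and $\Psi_{10}+\Psi_{14}$) and splits these not by (Sum) but by observing that the Hecke algebra for the ordinary $D_4$-cuspidal in $E_6$ has type $A_2$ with parameter $q$, hence is semisimple modulo~$\ell$, so this series contains exactly three PIMs and (HCr) then pins them down; the $.1^4$ modular series is handled by the analogous argument for its relative Weyl group $\fS_3$.
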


\begin{table}[ht]
\caption{$E_6(q)$, $(q^2+1)_\ell>5$}   \label{tab:E6,d=4}
$$\vbox{\offinterlineskip\halign{$#$\hfil\ \vrule height10pt depth4pt&&
      \hfil\ $#$\hfil\cr
  \phi_{1,0}&                                  1& 1\cr
  \phi_{6,1}&                          q\Ph8\Ph9& .& 1\cr
 \phi_{15,5}&         \hlf q^3\Ph5\Ph6^2\Ph8\Ph9& .& .& 1\cr
 \phi_{15,4}&        \hlf q^3\Ph5\Ph8\Ph9\Ph{12}& 1& 1& .& 1\cr
       D_4:3&       \hlf q^3\Ph1^4\Ph3^2\Ph5\Ph9& .& .& .& .& 1\cr
 \phi_{81,6}&         q^6\Ph3^3\Ph6^2\Ph9\Ph{12}& .& 1& 1& .& .& 1\cr
 \phi_{80,7}&  \sxt q^7\Ph2^4\Ph5\Ph8\Ph9\Ph{12}& .& 1& .& 1& .& 1& 1\cr
 \phi_{10,9}& \thrd q^7\Ph5\Ph6^2\Ph8\Ph9\Ph{12}& 1& .& .& 1& .& .& .& 1\cr
\phi_{90,8}&\thrd q^7\Ph3^3\Ph5\Ph6^2\Ph8\Ph{12}& .& .& 1& .& .& 1& .& .& 1\cr
      D_4:21&   \hlf q^7\Ph1^4\Ph3^2\Ph5\Ph8\Ph9& .& .& .& .& .& .& .& .& .& 1\cr
\phi_{81,10}&      q^{10}\Ph3^3\Ph6^2\Ph9\Ph{12}& .& .& .& .& .& 1& 1& .& 1& .& 1\cr
\phi_{15,17}&      \hlf q^{15}\Ph5\Ph6^2\Ph8\Ph9& .& .& .& .& 2& .& .& .& 1& .&1& 1\cr
\phi_{15,16}&     \hlf q^{15}\Ph5\Ph8\Ph9\Ph{12}& .& .& .& 1& .& .& 1& 1& .& .& .& .& 1\cr
     D_4:1^3&    \hlf q^{15}\Ph1^4\Ph3^2\Ph5\Ph9& .& .& .& .& .& .& .& .& .& .& .& .& .& 1\cr
 \phi_{6,25}&                     q^{25}\Ph8\Ph9& .& .& .& .& .& .& 1& .& .& 2& 1& .& 1& .& 1\cr
 \phi_{1,36}&                             q^{36}& .& .& .& .& .& .& .& 1& .& .& .& .& 1& 2& .& 1\cr
\noalign{\hrule}
 \omit& & ps& ps& ps& ps& D_4& ps& ps& ps& ps& D_4& A_3& .1^4& A_3& D_4& .1^4& .1^4\cr
   }}$$
\end{table}

\begin{table}[ht]
\caption{$E_6(q)$, block of defect 1, $2\ne\ell|(q^2+1)$}   \label{tab:E6,d=4,def1}
$$\vbox{\offinterlineskip\halign{$#$
        \vrule height10pt depth 2pt width 0pt&& \hfil$#$\hfil\cr
 \phi_{20,2}& \vr& \phi_{60,5}& \vr& \phi_{60,11}& \vr& \phi_{20,20}& \vr& \bigcirc\cr
 & ps& & ps& & ps& & A_3\cr
  }}$$
\end{table}

\begin{proof}
The Brauer tree for the block with cyclic defect is easily determined. We will
construct PIMs $\Psi_1,\ldots,\Psi_{16}$ with unipotent parts as given by the
columns of Table~\ref{tab:E6,d=4}.\par
The PIMs in the principal series can be read off from the $\Phi_4$-modular
decomposition matrix of the Iwahori--Hecke algebra of type $E_6$ given in
\cite[Tab.~7.13]{GJ11}. Note that by \cite[Thm.~3.10]{GM09} this agrees with
the $\ell$-modular decomposition matrix whenever $\ell\ge5$ (since $5$ is a
good prime for $E_6$ and $20$ does not divide any degree of $E_6$). So we
have columns $i$ in Table~\ref{tab:E6,d=4} for $i\in\{1,2,3,4,6,7,8,9\}$.
Furthermore, $\Psi_{11}$ and $\Psi_{13}$ are Harish-Chandra induced from Levi
subgroups of types $D_5$ and $A_5$. \par
The Hecke algebra for the ordinary cuspidal unipotent character of $D_4$ is
of type $A_2$ with parameter $q$, thus remains semisimple modulo~$\ell$. Now
Harish-Chandra induction from the Levi subgroup $L$ of type $D_5$ yields
projective characters $\Psi_5+\Psi_{10}$ and $\Psi_{10}+\Psi_{14}$. Both of
these must be the sum of two projective characters. Using (HCr) we find that
$\Psi_5,\Psi_{10},\Psi_{14}$ are the only subsums which can be projective.
\par
Since all other Harish-Chandra series have been accounted for, and $G$ cannot
have cuspidal Brauer characters by (Csp), the missing three PIMs
must lie in the series of the cuspidal Brauer character $\rho_{.1^4}$ of
$D_4$. Its relative Weyl group is the symmetric group $\fS_3$, so the
corresponding Hecke algebra must be semisimple. Now Harish-Chandra induction
yields $\Psi_{12}+\Psi_{15}$ and $\Psi_{15}+\Psi_{16}$, and via (HCr) there is
a unique way for each of these to split into sums of two non-zero projective
characters.
This completes the construction of $\Psi_1,\ldots,\Psi_{16}$ and thus the
proof.
\end{proof}

\section{Decomposition matrices for $E_7(q)$}

We next consider the four unipotent $\Phi_4$-blocks of $E_7(q)$ of positive
defect (see \cite[Tab.~2]{BMM}), which we name by their $4$-Harish-Chandra
sources in a Levi subgroup of type $A_1(q)^3$.

\begin{thm}   \label{thm:E7,d=4}
 The $\ell$-modular decomposition matrices for the unipotent blocks of
 $E_7(q)$ of positive defect for primes $\ell$ with $(q^2+1)_\ell>5$ are as
 given in Tables~\ref{tab:E7,d=4,bl1}--\ref{tab:E7,d=4,bl4}.
\end{thm}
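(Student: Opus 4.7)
The plan is to follow the same inductive strategy that was developed in Section~\ref{sec:methods} and applied in the preceding sections, constructing for each of the four blocks the columns of the decomposition matrix as unipotent parts of PIMs $\Psi_i$ by combining the tools (HCi), (HCr), (End), (St), (Csp), (Sum), (GGGR), (Cox)/(DL) and (Red). The key point is that all proper $4$-split Levi subgroups of $E_7$ are of types $E_6$, $D_6$, $A_6$, $D_5A_1$ (and their subtypes, plus central tori), whose unipotent $\Phi_4$-block decomposition matrices have been determined in Proposition~\ref{prop:E6,d=4} and Theorems~\ref{thm:D4,d=4}--\ref{thm:D6,d=4}; so (HCi) from their PIMs produces a large supply of projective characters in each block of $G=E_7(q)$.

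First I would read off all principal-series PIMs from the $\Phi_4$-modular decomposition matrix of the Iwahori--Hecke algebra of type $E_7$ via (End), noting that for $\ell\ge 5$ good (which is automatic here) the Hecke-algebra decomposition matrix coincides with the $\ell$-modular one by the Geck--Jacon theorem, as was used in the proof of Proposition~\ref{prop:E6,d=4}. Second, I would run (HCi) systematically from each $(L,\rho)$ with $L$ a proper $4$-split Levi and $\rho$ a PIM of $L$ in the correct block, decomposing the resulting character using (Sum) whenever two Harish-Chandra inductions share a summand, and verifying indecomposability of the putative PIMs $\Psi_i$ by (HCr), i.e.~by checking that no proper subcharacter restricts non-negatively to PIMs on every proper Levi. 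This should produce the columns labelled "ps", "$D_4$", "$D_3$", "$A_3$" and the other non-cuspidal series in all four tables, as well as the Steinberg PIM in each block via (St).

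Any remaining column then has to carry a cuspidal Brauer character: (Csp) and a comparison of Sylow $\ell$-subgroups with the $A_1^3$ source tell us exactly which blocks admit cuspidal modular characters and how many. For these missing columns I would apply (GGGR) attached to the dual special class of the family containing each uncovered unipotent character, following the pattern of the proof of Theorem~\ref{thm:D7,d=4}: after cutting the GGGR by the block, \cite[Thm.~14.10]{Tay14} controls which other families can contribute, and for the special families where the small finite group has order~$2$ the Mellin-transform formula of \cite[Thm.~6.5(ii)]{DLM14} pins down the projection onto the family. This should isolate each cuspidal PIM up to a bounded number of integral unknowns corresponding to constituents of strictly larger $a$-value.

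The final step is to eliminate these remaining unknowns using (Cox), (DL) and (Red). Concretely, for each unknown I would choose a Weyl-group element $w$ (a Coxeter element for the cuspidal PIMs, and a carefully chosen longer word in the style of the $w'$ appearing in the proof of Theorem~\ref{thm:D7,d=4} for the rest) and require the multiplicities in $(-1)^{\ell(w)}R_w$ of all candidate PIMs to be non-negative, which gives upper bounds; matching lower bounds follow from (Red) applied to $\ell$-characters in general position on a suitable $\Phi_4$-torus, available under the hypothesis $(q^2+1)_\ell>5$. I expect the main obstacle to lie in the principal block, which carries the largest family (and hence the largest GGGR projection to disentangle) together with several cuspidal constituents; as in the $D_7$ case, the delicate part will be choosing the element $w$ so that only one or two unknowns appear nontrivially in the (DL) inequality, so that it actually determines the entries rather than merely bounding them.
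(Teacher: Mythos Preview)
Your general strategy is correct, but you have substantially overestimated the difficulty of this case and planned for machinery that the paper never invokes. The paper's proof of Theorem~\ref{thm:E7,d=4} uses only (HCi), (Sum) and (HCr): in each of the four blocks all PIMs are obtained by Harish-Chandra induction from Levi subgroups of types $E_6$, $D_6$, $D_5$, $A_5$ (whose decomposition matrices are already known), except for a single column in each of blocks~1, 2 and~4, which is recovered by one application of (Sum); then (HCr) certifies indecomposability. No (End), (St), (GGGR), (Cox), (DL) or (Red) is needed.

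The key point you did not exploit is that a Sylow $\ell$-subgroup of $E_7(q)$ for $\ell\mid\Phi_4(q)$ is contained in a proper Levi subgroup (the centralizer of a Sylow $\Phi_4$-torus lies inside a Levi of type $A_1^3$), so by (Csp) there are \emph{no} cuspidal unipotent Brauer characters in any of these blocks; this is visible in Table~\ref{tab:4-blocks}, where no column is labelled ``c''. Consequently your entire ``final step'' (GGGRs, Mellin transforms, Deligne--Lusztig bounds, $\ell$-characters in general position) is vacuous here, and your prediction that ``the main obstacle lies in the principal block \ldots together with several cuspidal constituents'' is mistaken. Had you actually carried out the (Csp) check you mention, you would have discovered this and pruned the plan accordingly.
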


\begin{table}[ht]
\caption{$E_7(q)$, block $2\otimes 2\otimes 2$, $(q^2+1)_\ell>5$}   \label{tab:E7,d=4,bl1}
$$\vbox{\offinterlineskip\halign{$#$\hfil\ \vrule height10pt depth4pt&&
      \hfil\ $#$\hfil\cr
  \phi_{1,0}&                                                         1& 1\cr
 \phi_{56,3}&               \hlf q^3\Ph2^4\Ph6^2\Ph7\Ph{10}\Ph14\Ph{18}& .& 1\cr
      D_4:3.&                   \hlf q^3\Ph1^4\Ph3^2\Ph5\Ph7\Ph9\Ph{14}& .& .& 1\cr
\phi_{210,6}&                  q^6\Ph5\Ph7\Ph8\Ph9\Ph{10}\Ph{14}\Ph{18}& .& 1& .& 1\cr
\phi_{105,6}&               q^6\Ph5\Ph7\Ph9\Ph{10}\Ph{12}\Ph{14}\Ph{18}& 1& 1& .& .& 1\cr
\phi_{405,8}&     \hlf q^8\Ph3^3\Ph5\Ph6^2\Ph8\Ph9\Ph{12}\Ph{14}\Ph{18}& .& 1& .& 1& 1& 1\cr
\phi_{189,10}&    \hlf q^8\Ph3^2\Ph6^3\Ph7\Ph8\Ph9\Ph{10}\Ph{12}\Ph{18}& .& .& .& 1& .& .& 1\cr
\phi_{336,11}& \hlf q^{10}\Ph2^4\Ph6^2\Ph7\Ph8\Ph9\Ph{10}\Ph{14}\Ph{18}& .& .& .& 1& .& 1& 1& 1\cr
 D_4\!:\!21.&     \hlf q^{10}\Ph1^4\Ph3^2\Ph5\Ph7\Ph8\Ph9\Ph{14}\Ph{18}& .& .& .& .& .& .& .& .& 1\cr
\phi_{315,16}&\sxt q^{16}\Ph3^3\Ph5\Ph7\Ph8\Ph{10}\Ph{12}\Ph{14}\Ph{18}& .& .& .& .& 1& 1& .& 1& .& 1\cr
\phi_{35,22}&    \sxt q^{16}\Ph5\Ph6^3\Ph7\Ph8\Ph9\Ph{10}\Ph{12}\Ph{14}& .& .& 2& .& .& .& 1& 1& .& .& 1\cr
\phi_{70,18}&  \thrd q^{16}\Ph5\Ph7\Ph8\Ph9\Ph{10}\Ph{12}\Ph{14}\Ph{18}& 1& .& .& .& 1& .& .& .& .& .& .& 1\cr
\phi_{189,22}&          q^{22}\Ph3^2\Ph6^2\Ph7\Ph9\Ph{12}\Ph{14}\Ph{18}& .& .& .& .& 1& .& .& .& .& 1& .& 1& 1\cr
\phi_{120,25}&     \hlf q^{25}\Ph2^4\Ph5\Ph6^2\Ph9\Ph{10}\Ph{14}\Ph{18}& .& .& .& .& .& .& .& 1& 2& 1& .& .& 1& 1\cr
D_4\!:\!1^3.&         \hlf q^{25}\Ph1^4\Ph3^2\Ph5\Ph7\Ph9\Ph{10}\Ph{18}& .& .& .& .& .& .& .& .& .& .& .& .& .& .& 1\cr
\phi_{21,36}&                              q^{36}\Ph7\Ph9\Ph{14}\Ph{18}& .& .& .& .& .& .& .& .& .& .& .& 1& 1& .& 2& 1\cr
\noalign{\hrule}%
 \omit& & ps& ps& D_4& ps& ps& ps& ps& A_3& D_4& ps& .1^4& ps& A_3& .1^4& D_4& .1^4\cr
   }}$$
\vskip 1pc
\caption{$E_7(q)$, block $2\otimes 2\otimes 1^2$, $(q^2+1)_\ell>5$}   \label{tab:E7,d=4,bl2}
$$\vbox{\offinterlineskip\halign{$#$\hfil\ \vrule height10pt depth4pt&&
      \hfil\ $#$\hfil\cr
   \phi_{7,1}&                                          q\Ph7\Ph{12}\Ph{14}& 1\cr
  \phi_{15,7}&             \hlf q^4\Ph5\Ph8\Ph9\Ph{10}\Ph{12}\Ph{14}\Ph{18}& 1& 1\cr
 \phi_{105,5}&                \hlf q^4\Ph5\Ph7\Ph8\Ph9\Ph{10}\Ph{12}\Ph{18}& .& .& 1\cr
 \phi_{189,7}&                 q^7\Ph3^2\Ph6^2\Ph7\Ph9\Ph{12}\Ph{14}\Ph{18}& .& .& 1& 1\cr
 \phi_{280,8}&     \hlf q^7\Ph2^4\Ph5\Ph6^3\Ph7\Ph{10}\Ph{12}\Ph{14}\Ph{18}& 1& .& 1& .& 1\cr
      D_4:2.1&        \hlf q^7\Ph1^4\Ph3^3\Ph5\Ph7\Ph9\Ph{10}\Ph{12}\Ph{14}& .& .& .& .& .& 1\cr
 \phi_{378,9}&             q^9\Ph3^2\Ph6^2\Ph7\Ph8\Ph9\Ph{12}\Ph{14}\Ph{18}& .& .& 1& 1& 1& .& 1\cr
\phi_{210,13}&           q^{13}\Ph5\Ph7\Ph8\Ph9\Ph{10}\Ph{12}\Ph{14}\Ph{18}& 1& 1& .& .& 1& .& .& 1\cr
\phi_{105,15}&               q^{15}\Ph5\Ph7\Ph9\Ph{10}\Ph{12}\Ph{14}\Ph{18}& .& .& .& 1& .& .& 1& .& 1\cr
\phi_{216,16}&\hlf q^{15}\Ph2^4\Ph3^2\Ph6^3\Ph9\Ph{10}\Ph{12}\Ph{14}\Ph{18}& .& .& .& .& 1& .& 1& 1& .& 1\cr
    D_4:1^2.1&      \hlf q^{15}\Ph1^4\Ph3^3\Ph5\Ph6^2\Ph7\Ph9\Ph{12}\Ph{18}& .& .& .& .& .& .& .& .& .& .& 1\cr
 \phi_{35,31}&                 \hlf q^{30}\Ph5\Ph7\Ph8\Ph{12}\Ph{14}\Ph{18}& .& 1& .& .& .& .& .& 1& .& .& .& 1\cr
 \phi_{21,33}&                 \hlf q^{30}\Ph7\Ph8\Ph9\Ph{10}\Ph{12}\Ph{14}& .& .& .& .& .& 2& 1& .& 1& 1&  .& .& 1\cr
 \phi_{27,37}&                         q^{37}\Ph3^2\Ph6^2\Ph9\Ph{12}\Ph{18}& .& .& .& .& .& .&.& 1& .& 1& 2& 1& .& 1\cr
\noalign{\hrule}%
 \omit& & ps& ps& ps& ps& ps& D_4& ps& ps& A_3& A_3& D_4& A_3& .1^4& .1^4\cr
   }}$$
\end{table}

\begin{table}[ht]
\caption{$E_7(q)$, block $2\otimes 1^2\otimes 1^2$, $(q^2+1)_\ell>5$}   \label{tab:E7,d=4,bl3}
$$\vbox{\offinterlineskip\halign{$#$\hfil\ \vrule height10pt depth4pt&&
      \hfil\ $#$\hfil\cr
  \phi_{27,2}&                          q^2\Ph3^2\Ph6^2\Ph9\Ph{12}\Ph{18}& 1\cr
  \phi_{35,4}&                  \hlf q^3\Ph5\Ph7\Ph8\Ph{12}\Ph{14}\Ph{18}& 1& 1\cr
  \phi_{21,6}&                  \hlf q^3\Ph7\Ph8\Ph9\Ph{10}\Ph{12}\Ph{14}& .& .& 1\cr
 \phi_{216,9}& \hlf q^8\Ph2^4\Ph3^2\Ph6^3\Ph9\Ph{10}\Ph{12}\Ph{14}\Ph{18}& 1& .& 1& 1\cr
      D_4:1.2&       \hlf q^8\Ph1^4\Ph3^3\Ph5\Ph6^2\Ph7\Ph9\Ph{12}\Ph{18}& .& .& .& .& 1\cr
\phi_{210,10}&         q^{10}\Ph5\Ph7\Ph8\Ph9\Ph{10}\Ph{12}\Ph{14}\Ph{18}& 1& 1& .& 1& .& 1\cr
\phi_{105,12}&             q^{12}\Ph5\Ph7\Ph9\Ph{10}\Ph{12}\Ph{14}\Ph{18}& .& .& 1& .& .& .& 1\cr
\phi_{378,14}&        q^{14}\Ph3^2\Ph6^2\Ph7\Ph8\Ph9\Ph{12}\Ph{14}\Ph{18}& .& .& 1& 1& .& .& 1& 1\cr
\phi_{280,17}&\hlf q^{16}\Ph2^4\Ph5\Ph6^3\Ph7\Ph{10}\Ph{12}\Ph{14}\Ph{18}& .& .& .& 1& .& 1& .& 1& 1\cr
    D_4:1.1^2&   \hlf q^{16}\Ph1^4\Ph3^3\Ph5\Ph7\Ph9\Ph{10}\Ph{12}\Ph{14}& .& .& .& .& .& .& .& .& .& 1\cr
\phi_{189,20}&            q^{20}\Ph3^2\Ph6^2\Ph7\Ph9\Ph{12}\Ph{14}\Ph{18}& .& .& .& .& .& .& 1& 1& .& .& 1\cr
 \phi_{15,28}&        \hlf q^{25}\Ph5\Ph8\Ph9\Ph{10}\Ph{12}\Ph{14}\Ph{18}& .& 1& .& .& .& 1& .& .& .& .& .& 1\cr
\phi_{105,26}&           \hlf q^{25}\Ph5\Ph7\Ph8\Ph9\Ph{10}\Ph{12}\Ph{18}& .& .& .& .& 2& .& .& 1& 1& .&1& .& 1\cr
  \phi_{7,46}&                                   q^{46}\Ph7\Ph{12}\Ph{14}& .& .& .& .& .&1& .& .& 1& 2& .& 1& .& 1\cr
\noalign{\hrule}%
 \omit& & ps& ps& ps& ps& D_4& ps& ps& ps& A_3& D_4& A_3& A_3& .1^4& .1^4\cr
   }}$$
\vskip 1pc
\caption{$E_7(q)$, block $1^2\otimes 1^2\otimes 1^2$, $(q^2+1)_\ell>5$}   \label{tab:E7,d=4,bl4}
$$\vbox{\offinterlineskip\halign{$#$\hfil\ \vrule height10pt depth4pt&&
      \hfil\ $#$\hfil\cr
  \phi_{21,3}&                               q^3\Ph7\Ph9\Ph{14}\Ph{18}& 1\cr
 \phi_{120,4}&       \hlf q^4\Ph2^4\Ph5\Ph6^2\Ph9\Ph{10}\Ph{14}\Ph{18}& .& 1\cr
      D_4:.3 &          \hlf q^4\Ph1^4\Ph3^2\Ph5\Ph7\Ph9\Ph{10}\Ph{18}& .& .& 1\cr
 \phi_{189,5}&            q^5\Ph3^2\Ph6^2\Ph7\Ph9\Ph{12}\Ph{14}\Ph{18}& 1& 1& .& 1\cr
 \phi_{315,7}&  \sxt q^7\Ph3^3\Ph5\Ph7\Ph8\Ph{10}\Ph{12}\Ph{14}\Ph{18}& .& 1& .& 1& 1\cr
  \phi_{70,9}&   \thrd q^7\Ph5\Ph7\Ph8\Ph9\Ph{10}\Ph{12}\Ph{14}\Ph{18}& 1& .& .& 1& .& 1\cr
 \phi_{35,13}&     \sxt q^7\Ph5\Ph6^3\Ph7\Ph8\Ph9\Ph{10}\Ph{12}\Ph{14}& .& .& .& .& .& .& 1\cr
\phi_{336,14}&\hlf q^{13}\Ph2^4\Ph6^2\Ph7\Ph8\Ph9\Ph{10}\Ph{14}\Ph{18}& .& .& .& .& 1& .& 1& 1\cr
      D_4:.21&   \hlf q^{13}\Ph1^4\Ph3^2\Ph5\Ph7\Ph8\Ph9\Ph{14}\Ph{18}& .& .& .& .& .& .& .& .& 1\cr
\phi_{405,15}&\hlf q^{15}\Ph3^3\Ph5\Ph6^2\Ph8\Ph9\Ph{12}\Ph{14}\Ph{18}& .& .& .& 1& 1& .& .& 1& .& 1\cr
\phi_{189,17}&\hlf q^{15}\Ph3^2\Ph6^3\Ph7\Ph8\Ph9\Ph{10}\Ph{12}\Ph{18}& .& .& 2& .& .& .& 1& 1& .&.& 1\cr
\phi_{105,21}&          q^{21}\Ph5\Ph7\Ph9\Ph{10}\Ph{12}\Ph{14}\Ph{18}& .& .& .& 1& .& 1& .& .& .& 1& .& 1\cr
\phi_{210,21}&             q^{21}\Ph5\Ph7\Ph8\Ph9\Ph{10}\Ph{14}\Ph{18}& .& .& 2& .& .& .& .& 1& .&1& 1& .& 1\cr
 \phi_{56,30}&        \hlf q^{30}\Ph2^4\Ph6^2\Ph7\Ph{10}\Ph{14}\Ph{18}& .& .& .& .& .& .& .& .& 2& 1& .& 1& 1& 1\cr
     D_4:.1^3&              \hlf q^{30}\Ph1^4\Ph3^2\Ph5\Ph7\Ph9\Ph{14}& .& .& .& .& .& .& .& .& .& .& .& .& .& .& 1\cr
  \phi_{1,63}&                                                  q^{63}& .& .& .& .& .& 1& .& .& .& .& .& 1& .& .& 2& 1\cr
\noalign{\hrule}%
 \omit& & ps& ps& D_4& ps& ps& ps& ps& ps& D_4& ps& .1^4& A_3& A_3& .1^4& D_4& .1^4\cr
   }}$$
\end{table}

\begin{proof}
For the principal block, all
columns but the sixth are obtained by (HCi). The
projectives $\Psi_6+\Psi_7$ and $\Psi_6+\Psi_{10}$ then yield $\Psi_6$ via
(Sum). \par
For the second block, all but the ninth column are gotten by (HCi)
and then $\Psi_9+\Psi_{10}$ and $\Psi_9+\Psi_{12}$ give $\Psi_9$.
\par
For the third block, all PIMs are Harish-Chandra induced. Finally, all
projectives in the fourth block except for the fifth come from (HCi). The
projectives $\Psi_5+\Psi_7$ and $\Psi_5+\Psi_{10}$ then give $\Psi_5$ via
(Sum). Then (HCr) shows that all of these projectives are indecomposable.
\end{proof}


\section{Decomposition matrices for $E_8(q)$}

Finally, we determine the decomposition matrices of the unipotent
$\Phi_4$-blocks of $G=E_8(q)$ of non-maximal defect. Since $5$ is a bad prime
for $G$, and hence the basic set results do not apply in this case, we assume
that $\ell\ne5$ throughout. There are four unipotent
$\ell$-blocks of $\Phi_4$-defect two, see \cite[Tab.~2]{BMM}. These are
labelled by the four $\Phi_4$-cuspidal unipotent characters of the
$\Phi_4$-split Levi subgroup of type $D_4$.

\begin{thm}   \label{thm:E8,d=4}
 The $\ell$-modular decomposition matrices for the unipotent blocks of
 $E_8(q)$ of defect $\Phi_4^2$ for primes $\ell>5$ dividing $q^2+1$ are
 as given in Tables~\ref{tab:E8,d=4,bl1}--\ref{tab:E8,d=4,bl4}.
\end{thm}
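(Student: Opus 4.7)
The plan is to follow exactly the three-step strategy laid out in the introduction, proceeding block by block through the four unipotent $\Phi_4$-blocks of $E_8(q)$ of defect $\Phi_4^2$. Since $\ell>5$ is assumed and is therefore good for $E_8$, the unipotent characters in each block form a basic set (by Geck--Hiss), and all of the tools (HCi)--(Deg), in particular the generalized Gelfand--Graev machinery (GGGR), are available. The proper Levi subgroups whose unipotent $\Phi_4$-blocks have been fully analyzed in the previous sections are $E_7$, $E_6A_1$, $D_7$, $D_8$ and several $D_n$ with smaller $n$; these are the engines that will drive the computation. I would start with a careful bookkeeping of the $4$-Harish-Chandra sources of unipotent characters in $E_8$ in order to see which PIMs of each Levi land in which $E_8$-block and in which Harish-Chandra series.

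First, in each of the four blocks I would produce as many projective characters as possible by (HCi) from the PIMs of $E_7(q)$ (Tables~\ref{tab:E7,d=4,bl1}--\ref{tab:E7,d=4,bl4}), of $E_6(q)\times A_1(q)$ (Table~\ref{tab:E6,d=4}), and of $D_7(q)$, $D_8(q)$ (Tables~\ref{tab:D7,d=4}, \ref{tab:D7,d=4,bl2}, \ref{tab:D8,d=4,bl2}, \ref{tab:D8,d=4,bl3}), together with the principal series columns which are read off from the $\Phi_4$-modular decomposition matrix of the Hecke algebra of type $E_8$ via (End). For any column not directly induced, I expect to find two Harish-Chandra induced projectives of the form $\Psi_i+\Psi_j$ and $\Psi_j+\Psi_k$ from different parabolic subgroups, so that (Sum) isolates $\Psi_j$; I would then verify indecomposability of each candidate by (HCr), restricting to every proper Levi and checking that no proper subcharacter is a non-negative combination of PIMs there. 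The Steinberg PIM is handled by (St), which is non-cuspidal in the four blocks under consideration since each has defect $\Phi_4^2$ and its Sylow is contained in the $D_4$-Levi carrying the $\Phi_4$-cuspidal.

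Second, any remaining columns correspond to genuinely new cuspidal modular Harish-Chandra series. For these I would deploy (GGGR): for each family $\cF$ in the block one chooses the dual unipotent class $\cO$ and, when the conditions of \cite[Thm.~6.5(ii)]{DLM14} are met, computes the projection onto $\cF$ of the GGGR via Mellin transforms exactly as in the proof of Theorem~\ref{thm:D7,d=4}; this simultaneously forces unitriangularity and nails down the rows of the new columns lying in $\cF$. To bound entries below families of larger $a$-value, I would combine (DL) with a Coxeter element, giving inequalities on the multiplicities in $\pm R_w$, and (Red), inducing an $\ell$-character in general position of a $\Phi_4$-torus — here the hypothesis $(q^2+1)_\ell>5$ guarantees that such characters exist on the relevant tori — to get matching lower bounds. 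The entry ``$2$'' that appears below the $D_4$-cuspidal PIMs in the smaller groups is expected to propagate, and its value should again be forced by combining (Red) with (Cox) as in Theorem~\ref{thm:D4,d=4}.

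The main obstacle will be sheer size and bookkeeping: $E_8$ has many unipotent characters per block, numerous non-trivial modular Harish-Chandra series (including cuspidal ones, guaranteed by (Csp) whenever no proper Levi contains a Sylow $\ell$-subgroup), and several families on which the Springer/Mellin transform analysis of GGGRs must be carried out one by one. A concrete difficulty is that for some families the hypotheses of \cite[Thm.~6.5(ii)]{DLM14} on the component group $A_G(u)$ and the local systems are delicate, and for the largest families the GGGR output will only constrain, not determine, the columns; in those cases one must iterate between (DL) with longer Weyl-group elements (as was done with $w'$ in the proof of Theorem~\ref{thm:D7,d=4}) and (Red) with carefully chosen $\Phi_4$-tori, and invoke (Deg) to rule out negative Brauer-character degrees, until all entries are pinned down. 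I expect the resulting decomposition matrices to exhibit the same qualitative features observed throughout the paper — unitriangularity compatible with Lusztig's $a$-function, identity blocks within each family, and irreducibility of cuspidal unipotent characters under $\ell$-reduction.
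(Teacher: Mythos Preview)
Your plan misdiagnoses where the difficulty lies. The four $E_8$-blocks in question have \emph{no} cuspidal Brauer characters at all: their modular Harish-Chandra series are exhausted by the principal series, the $A_3$-series, the ordinary $D_4$-cuspidal series, and the $.1^4$-series (cf.\ Table~\ref{tab:4-blocks}), so the Sylow $\ell$-subgroup for these blocks sits inside a proper Levi and (Csp) gives nothing. Consequently Steps~2 and~3 of your outline --- (GGGR), (DL), (Cox), (Red), (Deg) --- are never invoked; the entire proof lives inside Step~1. (Also, (St) is irrelevant here since the Steinberg character lies in the principal block, and $D_8$ is not a Levi subgroup of $E_8$.)

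The genuine obstacle, which your plan does not address, occurs in the block above $\binom{2\,3}{0\,1}$. There (HCi) from $E_7$ supplies only six of the sixteen columns, and (End) for the type-$E_8$ Hecke algebra contributes four more in the principal series. The six remaining PIMs lie in the $D_4$- and $.1^4$-series; Harish-Chandra induction from $E_7$ only yields sums such as $\Psi_7+\Psi_8$ and $\Psi_7+\Psi_9$, which (Sum) alone cannot split. The paper resolves this by analysing the relative Hecke algebra for the $D_4$-cuspidal (and likewise for $.1^4$) inside $E_8$: it is of type $F_4$ with parameters $q^4,q$, and by Bremke \cite[Thm.~3.13]{Bre94} all relevant characters lie in semisimple blocks for $\ell\ge5$, so one knows \emph{a priori} how many summands each induced projective must have; combined with (HCr) this forces the unique splitting. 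Without this Hecke-algebra semisimplicity argument the third block cannot be completed by your proposed tools, since the columns in question are not cuspidal and hence not naturally accessible via GGGRs attached to new families.
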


\begin{table}[ht]
\caption{$E_8(q)$, block $\binom{3}{1}$, $5<\ell|(q^2+1)$}   \label{tab:E8,d=4,bl1}
$$\vbox{\offinterlineskip\halign{$#$\hfil\ \vrule height10pt depth4pt&&
      \hfil\ $#$\hfil\cr
      \phi_{8,1}& 1\cr
    \phi_{560,5}& .& 1\cr
   \phi_{1344,8}& .& 1& 1\cr
  D_4:\phi_{4,1}& .& .& .& 1\cr
  \phi_{1400,11}& 1& 1& .& .& 1\cr
   \phi_{840,13}& .& .& 1& .& .& 1\cr
  \phi_{4536,13}& .& 1& 1& .& 1& .& 1\cr
  \phi_{3200,16}& .& .& 1& .& .& 1& 1& 1\cr
D_4:\phi_{8,3}''& .& .& .& .& .& .& .& .& 1\cr
  \phi_{4200,21}& .& .& .& .& 1& .& 1& 1& .& 1\cr
  \phi_{2240,28}& 1& .& .& .& 1& .& .& .& .& .& 1\cr
D_4:\phi_{4,7}''& .& .& .& .& .& .& .& .& .& .& .& 1\cr
  \phi_{3240,31}& .& .& .& .& 1& .& .& .& .& 1& 1& .& 1\cr
  \phi_{1400,37}& .& .& .& .& .& .& .& .& .& .& 1& 2& 1& 1\cr
  \phi_{1008,39}& .& .& .& .& .& .& .& 1& 2& 1& .& .& 1& .& 1\cr
    \phi_{56,49}& .& .& .& 2& .& 1& .& 1& .& .& .& .& .& .& .& 1\cr
\noalign{\hrule}%
 \omit& ps& ps& ps& D_4& ps& ps& ps& A_3& D_4& ps& ps& D_4& A_3& .1^4& .1^4& .1^4\cr
   }}$$
\vskip 1pc
\caption{$E_8(q)$, block $\binom{1\,2}{0\,3}$, $5<\ell|(q^2+1)$}   \label{tab:E8,d=4,bl2}
$$\vbox{\offinterlineskip\halign{$#$\hfil\ \vrule height10pt depth4pt&&
      \hfil\ $#$\hfil\cr
      \phi_{84,4}& 1\cr
  D_4:\phi_{2,4}'& .& 1\cr
     \phi_{700,6}& 1& .& 1\cr
   \phi_{2268,10}& .& .& 1& 1\cr
   \phi_{4200,12}& 1& .& 1& 1& 1\cr
   \phi_{2100,16}& .& .& .& 1& .& 1\cr
    \phi_{448,25}& .& 2& .& .& .& 1& 1\cr
   \phi_{2016,19}& 1& .& .& .& 1& .& .& 1\cr
   \phi_{5600,19}& .& .& .& 1& 1& 1& .& .& 1\cr
   D_4:\phi_{4,8}& .& .& .& .& .& .& .& .& .& 1\cr
   \phi_{4200,24}& .& .& .& .& 1& .& .& 1& 1& .& 1\cr
   \phi_{2100,28}& .& 2& .& .& .& 1& 1& .& 1& .& .& 1\cr
   \phi_{2268,30}& .& .& .& .& .& .& .& .& 1& 2& 1& 1& 1\cr
    \phi_{700,42}& .& .& .& .& .& .& .& 1& .& 2& 1& .& 1& 1\cr
D_4:\phi_{2,16}''& .& .& .& .& .& .& .& .& .& .& .& .& .& .& 1\cr
     \phi_{84,64}& .& .& .& .& .& .& .& 1& .& .& .& .& .& 1& 2& 1\cr
\noalign{\hrule}%
 \omit& ps& D_4& ps& ps& ps& ps& .1^4& ps& ps& D_4& ps& A_3& .1^4& A_3& D_4& .1^4\cr
   }}$$
\end{table}

\begin{table}[ht]
\caption{$E_8(q)$, block $\binom{2\,3}{0\,1}$, $5<\ell|(q^2+1)$}   \label{tab:E8,d=4,bl3}
$$\vbox{\offinterlineskip\halign{$#$\hfil\ \vrule height10pt depth4pt&&
      \hfil\ $#$\hfil\cr
     \phi_{28,8}& 1\cr
    \phi_{160,7}& .& 1\cr
    \phi_{300,8}& .& .& 1\cr
   \phi_{972,12}& .& 1& 1& 1\cr
   \phi_{840,14}& .& .& 1& 1& 1\cr
   \phi_{700,16}& 1& 1& .& .& .& 1\cr
 D_4:\phi_{12,4}& .& .& .& .& .& .& 1\cr
 D_4:\phi_{6,6}'& .& .& .& .& .& .& .& 1\cr
D_4:\phi_{6,6}''& .& .& .& .& .& .& .& .& 1\cr
  \phi_{1344,19}& .& 1& .& 1& .& 1& .& .& .& 1\cr
   \phi_{840,26}& 1& .& .& .& .& 1& .& .& .& .& 1\cr
   \phi_{700,28}& .& .& .& 1& 1& .& .& .& .& 1& .& 1\cr
   \phi_{972,32}& .& .& .& .& .& 1& .& .& .& 1& 1& .& 1\cr
   \phi_{300,44}& .& .& .& .& .& .& .& .& 2& .& 1& .& 1& 1\cr
   \phi_{160,55}& .& .& .& .& .& .& 2& .& .& 1& .& 1& 1& .& 1\cr
    \phi_{28,68}& .& .& .& .& 1& .& .& 2& .& .& .& 1& .& .& .& 1\cr
\noalign{\hrule}%
 \omit& ps& ps& ps& ps& ps& ps& D_4& D_4& D_4& ps& ps& A_3& A_3& .1^4& .1^4& .1^4\cr
   }}$$
\vskip 1pc
\caption{$E_8(q)$, block $\binom{1\,2\,3}{0\,1\,3}$, $5<\ell|(q^2+1)$}   \label{tab:E8,d=4,bl4}
$$\vbox{\offinterlineskip\halign{$#$\hfil\ \vrule height10pt depth4pt&&
      \hfil\ $#$\hfil\cr
   \phi_{56,19}& 1\cr
  \phi_{1400,7}& .& 1\cr
  \phi_{1008,9}& .& .& 1\cr
  \phi_{3240,9}& .& 1& 1& 1\cr
 \phi_{2240,10}& .& 1& .& 1& 1\cr
D_4:\phi_{4,7}'& .& .& .& .& .& 1\cr
 \phi_{4200,15}& .& .& 1& 1& .& .& 1\cr
 \phi_{3200,22}& 1& .& .& .& .& .& 1& 1\cr
D_4:\phi_{8,9}'& .& .& .& .& .& .& .& .& 1\cr
 \phi_{4536,23}& .& .& .& 1& .& .& 1& 1& .& 1\cr
 \phi_{1400,29}& .& .& .& 1& 1& .& .& .& .& 1& 1\cr
  \phi_{840,31}& 1& .& .& .& .& 2& .& 1& .& .& .& 1\cr
 \phi_{1344,38}& .& .& .& .& .& 2& .& 1& .& 1& .& 1& 1\cr
D_4:\phi_{4,13}& .& .& .& .& .& .& .& .& .& .& .& .& .& 1\cr
  \phi_{560,47}& .& .& .& .& .& .& .& .& 2& 1& 1& .& 1& .& 1\cr
    \phi_{8,91}& .& .& .& .& 1& .& .& .& .& .& 1& .& .& 2& .& 1\cr
\noalign{\hrule}%
 \omit& ps& ps& ps& ps& ps& D_4& ps& ps& D_4& ps& A_3& .1^4& A_3& D_4& .1^4& .1^4\cr
   }}$$
\end{table}

\begin{proof}
In the block above $\binom{3}{1}$, all columns $\Psi_i$ except for
$i\in\{7,9,10,15\}$ are obtained by (HCi). Using (Sum) the
projectives $\Psi_1+\Psi_2+\Psi_7$ and $\Psi_7+\Psi_{11}$ yield $\Psi_7$,
$\Psi_4+\Psi_9$ and $\Psi_9+\Psi_{12}$ yield $\Psi_9$,
$\Psi_6+2\Psi_7+\Psi_{10}$ and $\Psi_{10}+\Psi_{11}$ yield $\Psi_{10}$,
and $\Psi_{11}+\Psi_{15}$ and $\Psi_{14}+\Psi_{15}$ yield $\Psi_{15}$.
\par
In the block above $\binom{1\,2}{0\,3}$, (HCi) yields all columns
$\Psi_i$ except for indices $i\in\{3,6,11\}$. Here, the projectives
$\Psi_{11}+\Psi_{12}$ and $\Psi_{11}+\Psi_{14}$ yield $\Psi_{11}$,
$\Psi_3+\Psi_5+\Psi_{11}$ and $2\Psi_3$ yield $\Psi_3$, and
$\Psi_6+2\Psi_9$ and $2\Psi_6$ yield $\Psi_6$.
\par
In the block above $\binom{2\,3}{0\,1}$, (HCi) yields the columns
$\Psi_i$ with $i\in\{3,4,6,11,12,13\}$. The information obtained in this way
does not seem to yield strong enough conditions to determine the decomposition
matrix completely. So here we use in addition the decomposition matrix of the
Hecke algebra of type $E_8$. In characteristic zero this can be found in
\cite[Tab.~7.15]{GJ11}. By \cite[Thm.~3.10]{GM09} this agrees with the
decomposition matrix in characteristic~$\ell$ for all $\ell\ge7$. This yields
in addition the principal series PIMs $\Psi_i$ with $i\in\{1,2,5,10\}$. To
construct the missing projectives, let us first consider the characters in the
Harish-Chandra series of the ordinary cuspidal character of a Levi subgroup
of type $D_4$. Here, the relative Weyl group $W$ has type $F_4$, and the Hecke
algebra $H$ has parameters $q^4,q$. But then all characters of $H$ relevant for
our block lie in semisimple blocks of $H$ for all $\ell\ge5$ by
\cite[Thm.~3.13]{Bre94}. Thus, the decomposition of induced projectives
from a Levi subgroup of type $E_7$ can be read off from the character table of
$W$. In the third block, we find $\Psi_7+\Psi_8$, $\Psi_7+\Psi_9$, which
should have a projective summand in common. The only splitting of these
projectives compatible with (HCr) is as given. This accounts for the
projectives $\Psi_7,\Psi_8,\Psi_9$ in the $D_4$-series.
Finally, we consider the characters above the cuspidal character $.1^4$ of the
Levi subgroup of type $D_4$. Here again, the relative Weyl group has type
$F_4$; the parameters of the corresponding Hecke algebra are already
determined locally inside $D_5$ and $D_4A_1$ to be the same as for the
ordinary cuspidal character considered above. So again by
\cite[Thm.~3.13]{Bre94}, all characters in the $.1^4$-Harish-Chandra
series correspond to blocks of $H$ of defect~0. Harish-Chandra induction
from $E_7$ yields $\Psi_{14}+\Psi_{15}$ and $\Psi_{15}+\Psi_{16}$, with a
common summand. Visible, the only possible splitting is as claimed.
\par
Finally, in the block indexed by $\binom{1\,2\,3}{0\,1\,3}$, (HCi)
yields all columns $\Psi_i$ except for $i\in\{2,3,7\}$. The projectives
$\Psi_5+\Psi_7+\Psi_{10}$ and $2\Psi_7+\Psi_8$ yield $\Psi_7$,
$\Psi_2+\Psi_7$ and $2\Psi_2+\Psi_4$ yield $\Psi_2$, and
$\Psi_1+\Psi_3+2\Psi_7$ and $\Psi_2+\Psi_3$ give $\Psi_3$.
\par
It is now a routine computation using (HCr) to check that none of the
projectives constructed above can be decomposable.
\end{proof}


\subsection{The $\Phi_4$-blocks in untwisted groups}
In the following table, we have collected some numerical information on the
various $\Phi_4$-blocks of defect $\Phi_4^2$ and $\Phi_4^3$ whose decomposition
matrices we have determined: the relative Weyl group $W_G(b)$ of the block
(see \cite[Tab.~1]{BMM}) and the distribution of its Brauer characters into
Harish-Chandra series:

\begin{table}[ht]
\caption{HC-series in $\Phi_4$-blocks of defect $\Phi_4^2$ and $\Phi_4^3$}  \label{tab:4-blocks}
$$\begin{array}{rc|cc|ccccccc}
 G& b& W_G(b)& |\IBr(b)|& ps& A_3& D_3& D_4& .1^4& A_3D_3& c\\
\hline
    D_4&    & G(4,2,2)& 10& 5& 2& 1& 1& 1\\
    D_6&   2&         &   & 5& 2& 1& 1& 1\\
\hline
    D_5&    & G(4,1,2)& 14& 7& 1& 2& 2& 2\\
    D_6& 1,3&         &   & 7& 1& 2& 2& 2\\
    D_7&   2&         &   & 7& 1& 2& 2& 2\\
    D_8& \text{1--4}& &   & 7& 1& 2& 2& 2\\
    E_7& 2,3&         &   & 7& \span3\quad & 2& 2\\
\hline
    E_6&    &      G_8& 16& 8& \span2\quad& 3& 3\\
    E_7& 1,4&         &   & 8& \span2\quad& 3& 3\\
    E_8& \text{1--4}& &   & 8& \span2\quad& 3& 3\\
\hline
    D_7&   1& G(4,1,3)& 40& 15& 3& 5& 6& 6& 1& 4\\
\end{array}$$
Note that the series $A_3$ and $D_3$ fuse in $E_6$ (and hence in $E_7$ and
$E_8$).
\end{table}

\begin{rem}
(a) It emerges that the distribution into modular Harish-Chandra series in
all examples considered only depends on the relative Weyl group.

(b) In addition to the first block of $\SO_{12}^+(q)$ and the first block of
$\SO_{16}^+(q)$, and the third  block of $\SO_{12}^+(q)$ and the fourth block
of $\SO_{16}^+(q)$, which form Morita equivalent pairs by
Theorem~\ref{thm:D8,d=4}, the following four pairs of blocks have identical
decomposition matrices (after suitably reordering the characters): the
principal block of $E_6$ and the 3rd block of $E_8$; the 2nd and the 3rd block
of $E_7$; the first blocks of $E_7$ and of $E_8$; the 4th blocks of $E_7$
and of $E_8$.
It is claimed (without proof) in \cite[Rem.~34]{Mi08} that the first listed
pair of blocks are in fact Morita equivalent. It would be interesting to see
whether this is true for all pairs mentioned above.

(c) The decomposition matrix for $\SO_8^+(q)$ and the one for the second block
of $\SO_{12}^+(q)$ have automorphisms induced by the non-trivial graph
automorphisms of the underlying groups. But note that also
the decomposition matrix for the principal block of $E_6$ has an
automorphism fixing $\phi_{6,1},\phi_{80,7},D_4\!:\!2.1$ and $\phi_{6,25}$ and
interchanging the other characters in pairs, and similarly, the decomposition
matrix for the second block of $E_7$ has an automorphism of order two with
fixed points $\phi_{280,8}$ and $\phi_{216,16}$.
\end{rem}

\section{Decomposition matrices for twisted type groups}   \label{sec:twist}

We now turned to simply-laced groups of twisted type, viz.~$\tw2D_5$,
$\tw2D_6$ and $\tw2E_6$.

\subsection{Decomposition matrices for $\SO_{10}^-(q)$}
The group $G=\SO_{10}^-(q)$ has four unipotent $\ell$-blocks for primes
$2\ne \ell|(q^2+1)$, the principal block, one block with cyclic defect and two
blocks of defect zero.

\begin{thm}   \label{thm:2D5,d=4}
 Assume that $q$ is odd. Then the $\ell$-modular decomposition matrices for
 the unipotent blocks of $\SO_{10}^-(q)$ of positive defect for primes $\ell$
 with $(q^2+1)_\ell>5$ are as given in Tables~\ref{tab:2D5,d=4}
 and~\ref{tab:2D5,d=4,def1}.
\end{thm}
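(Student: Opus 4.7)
The plan is to follow the template already used for the untwisted counterpart $\SO_{10}^+(q)$ in Theorem~\ref{thm:D5,d=4}, adapting it to the twisted structure. First, the block with cyclic defect is handled separately: its Brauer tree is determined by the general theory of blocks with cyclic defect groups, together with the distribution of unipotent characters into $\Phi_4$-blocks recorded in \cite{BMM}. This part is routine.

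For the principal block, I would build the columns of the decomposition matrix in three passes. Harish-Chandra induction (HCi) from the $F$-stable Levi subgroups of types $A_4$, $A_1\,A_3$ and $\tw2D_4=\SO_8^-(q)$ provides an initial supply of projective characters, computable in \Chevie\ via induction of characters in the relevant relative Weyl groups. The Hecke algebra associated with a maximally split torus is of type $B_4$ with unequal parameters, so (End), combined with the known decomposition matrix of this Hecke algebra at a fourth root of unity, pins down the principal series PIMs as soon as $(q^2+1)_\ell>5$. Suitably chosen differences of pairs of induced projectives should then isolate, via (Sum), the remaining PIMs lying in the modular Harish-Chandra series above the cuspidal unipotent characters of the $A_3$- and $\tw2D_4$-type Levis; the Hecke algebras of these series have small rank and their splittings modulo~$\ell$ can be constrained by (HCr).

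As in the proof of Theorem~\ref{thm:D5,d=4}, I anticipate at least one undetermined parameter $a\in\{0,1,2\}$ appearing in the splitting of the projective Harish-Chandra induced from the $\tw2D_4$-Levi subgroup, parametrising the multiplicity in the PIM containing the cuspidal unipotent character. Following the pattern established there, I expect this parameter to be fixed only later, by a positivity argument (via (Deg) or (Red) applied to a $\Phi_4$-character in general position) once the decomposition matrix of a larger twisted group, presumably $\SO_{14}^-(q)=\tw2D_7(q)$, is analysed.

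The hypothesis that $q$ is odd should enter only when invoking (GGGR): Taylor's extension of Lusztig's results to good characteristic requires the underlying prime $p$ to be good for type $D$, i.e.\ $p>2$. This in turn yields the unitriangular shape of the matrix with respect to the $a$-function, and matches the cuspidal Brauer character (produced by (Csp) if applicable, or forced by dimension count) to the Steinberg PIM provided by (St). The main obstacle I foresee is not any single computation but the bookkeeping required to rule out, using (HCr) against every proper Levi subgroup, alternative splittings of the Harish-Chandra induced projectives in the $\tw2D_4$-series; this is a finite, case-by-case check entirely parallel to the one performed for $\SO_{10}^+(q)$.
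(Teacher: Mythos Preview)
Your overall template is reasonable, but the analogy with the untwisted case $\SO_{10}^+(q)$ breaks down at precisely the point where the unknown parameter arises, and this is the heart of the proof.

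First, the Harish-Chandra series structure is different from what you describe. There is no modular series above a cuspidal character of a $\tw2D_4$-type Levi; the non-principal series in the principal block are those above the cuspidal Brauer character of a Levi of type $\tw2D_2\cong A_1(q^2)$ (six PIMs) and one above the $A_3$-Levi. The Hecke algebra for the $\tw2D_2$-series is of type $B_3$ with parameters $q^4,q$, and its decomposition matrix, together with (HCi), (Sum) and (HCr), produces $\Psi_1,\ldots,\Psi_{11},\Psi_{13}$ directly; (St) gives $\Psi_{14}$. After this there are \emph{two} cuspidal Brauer characters unaccounted for, not one.

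Second, the parameter $a$ does not arise from the splitting of an induced projective as in Theorem~\ref{thm:D5,d=4}; it appears in the cuspidal PIM $\Psi_{12}$, whose unipotent part $(\rho_{.2^2}+a\rho_{.21^2}+b\rho_{.1^4})$ is only accessible via (GGGR). One then needs both (Cox) to get $b\le a+2$ and (Red), applied to an $\ell$-character in general position of a torus of order $(q+1)(q^2+1)^2$, to get $b\ge a+2$; together these force $b=a+2$. The bound $a\le1$ is obtained not by a positivity argument inside a larger decomposition matrix, but by Harish-Chandra restricting a carefully chosen GGGR of $\SO_{14}^-(q)$ (the one with projection $\rho_{.321}+\rho_{1.2^21}$ onto its family). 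Crucially, $a$ is \emph{not} determined: it remains $a\in\{0,1\}$, and the theorem only claims the decomposition matrix up to this ambiguity. Your expectation that $a$ is fixed later by (Deg) or (Red) in $\tw2D_7$ is not what happens.
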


\begin{table}[htbp]
\caption{$\SO_{10}^-(q)$, $q$ odd, $(q^2+1)_\ell>5$}   \label{tab:2D5,d=4}
$$\vbox{\offinterlineskip\halign{$#$\hfil\ \vrule height10pt depth4pt&&
      \hfil\ $#$\hfil\cr
    4.&                       1& 1\cr
   31.&            q\Ph3\Ph{10}& 1& 1\cr
  2^2.&          q^2\Ph8\Ph{10}& .& .& 1\cr
    .4& \hlf q^3\Ph6\Ph8\Ph{10}& 1& .& .& 1\cr
 21^2.& \hlf q^3\Ph3\Ph8\Ph{10}& .& 1& .& .& 1\cr
 21.1&\hlf q^3\Ph2^4\Ph6\Ph{10}& .& .& 1& .& .& 1\cr
 2.1^2&         q^6\Ph3\Ph6\Ph8& .& .& .& .& .& 1& 1\cr
 1^2.2&      q^5\Ph3\Ph8\Ph{10}& .& .& .& .& .& 1& .& 1\cr
  1^4.& \hlf q^7\Ph6\Ph8\Ph{10}& .& .& .& .& 1& .& .& .& 1\cr
   .31& \hlf q^7\Ph3\Ph8\Ph{10}& 1& 1& .& 1& .& .& .& .& .& 1\cr
 1.21&\hlf q^7\Ph2^4\Ph6\Ph{10}& .& .& 1& .& .& 1& 1& 1& .& .& 1\cr
  .2^2&       q^{10}\Ph8\Ph{10}& .& .& 1& .& .& .& .& .& .& .& 1& 1\cr
 .21^2&       q^{13}\Ph3\Ph{10}& .& 1& .& .& 1& .& .& .& .& 1& .& a& 1\cr
  .1^4&                  q^{20}& .& .& .& .& 1& .& .& .& 1& .& .& a\pl2& 1& 1\cr
\noalign{\hrule}
  & & ps& ps& ps& \tw2D_2& ps& ps& \tw2D_2& \tw2D_2& A_3& \tw2D_2& \tw2D_2& c& \tw2D_2& c\cr
  }}$$
  Here, $a \in \{0,1\}$.
\end{table}

\begin{table}[ht]
\caption{$\SO_{10}^-(q)$, block of defect 1, $2\ne\ell|(q^2+1)$} \label{tab:2D5,d=4,def1}
$$\vbox{\offinterlineskip\halign{$#$
        \vrule height10pt depth 2pt width 0pt&& \hfil$#$\hfil\cr
 3.1& \vr& 1.3\,& \vr& \bigcirc& \vr& 1.1^3& \vr& 1^3.1\cr
 & ps& & \tw2D_2& & \tw2D_2& & ps\cr
  }}$$
\end{table}


\begin{proof}
The Brauer tree for the block with cyclic defect is easily seen to be as given
in Table~\ref{tab:2D5,d=4,def1}. We describe how to obtain projectives
$\Psi_1,\ldots,\Psi_{11},\Psi_{13}, \Psi_{14}$ for the principal block as in
Table~\ref{tab:2D5,d=4}. \par
Application of (HCi) yields $\Psi_i$ with $i\in\{1,3,4,5,6,7,8,9,13\}$.
Moreover, we obtain $\Psi_2+\Psi_3$, $\Psi_2+\Psi_6$, so that (Sum) gives
$\Psi_2$. (Alternatively, the projectives in the principal series are obtained
from the Iwahori--Hecke algebra $H$ of type $B_4$, with parameters $q^2$ and
$q$.) Next, (HCi) gives $\tPsi_{10}=\Psi_{10}+\Psi_8$,
$\tPsi_{10}'=\Psi_{10}+\Psi_{11}$ and $\tPsi_{11}=\Psi_{11}+\Psi_{13}$. Thus,
$\tPsi_{10}+\tPsi_{11}=\tPsi_{10}'+\Psi_8+\Psi_{13}$, which shows that
$\Psi_8,\Psi_{13}$ occur as summands of $\tPsi_{10}+\tPsi_{11}$. Nonnegativity
of decomposition numbers implies that $\Psi_8$ occurs in $\tPsi_{10}$ and
$\Psi_{11}$ in $\tPsi_{11}$, so we obtain $\Psi_{10}$ and $\Psi_{11}$.
Application of (HCr) also shows that $\Psi_1,\ldots,\Psi_{11}$ and $\Psi_{13}$
are indecomposable. The last projective $\Psi_{14}$ is given by (St).
\par
The Hecke algebra for the cuspidal Brauer character of
$\tw2D_2(q)\cong A_1(q^2)$ is of type $B_3$, and the parameters are seen
locally in the Levi subgroups of types $\tw2D_4$ and $\tw2D_3\times A_1$ to
be $q^4$ and $q$. From its decomposition matrix it follows that exactly eight
simple modules lie in the corresponding Harish-Chandra series in $G$. Since the
unipotent block with cyclic defect contains two of them, the principal block
will contain the remaining six. We have then accounted for all non-cuspidal
Harish-Chandra series, so the remaining Brauer character must all be cuspidal.
\par
By (GGGR), the unipotent part of $\Psi_{12}$ is $\rho_{.2^2}+a \rho_{.21^2}
+b\rho_{.1^4}$, where $a$ and $b$ are unknown. To compute $b$ we use
(Cox), which gives the relation $b \leq a+2$. The relation $b \geq a+2$
is obtained from the $\ell$-reduction of the non-unipotent character
obtained by Deligne--Lusztig induction of an $\ell$-character in general
position of a torus of order $(q+1)(q^2+1)^2$ (such a character exists
whenever $(q^2+1)_\ell>5$). Finally, to obtain an upper bound for $a$, we
consider the generalized Gelfand--Graev representations of $\SO_{14}^-(q)$
associated to the family
$\{\rho_{1^5.1},\rho_{1^3.1^3},\rho_{1.2^21},\rho_{.321}\}$, and more precisely
the one whose projection to this family is $\rho_{.321}+\rho_{1.2^21}$.
The character of this representation, cut by the block containing
$\rho_{.321}$ is of the form $\rho_{.321}+ \rho_{1.2^21} + \alpha \rho_{1.1^5}$
by (GGGR). The Harish-Chandra restriction of this character yields $a \leq 1$.
\end{proof}

\begin{rem}
If Kawanaka's conjecture (see Remark~\ref{rem:kawanaka}) holds for the characters
in the family
$\{\rho_{1^5.1},\rho_{1^3.1^3},\rho_{1.2^21},\rho_{.321}\}$ of $\SO_{14}^-(q)$,
then the previous argument shows that $a=0$.
\end{rem}

\begin{rem}
The $5$-modular decomposition matrix of $\SO_{10}^-(2)$ is known; there,
the last three entries in the partially unknown 12th column of
Table~\ref{tab:2D5,d=4} read $(1,0,1)$, whence the case when $(q^2+1)_\ell=5$
does behave differently.
\end{rem}

\subsection{A decomposition matrix for $\SO_{14}^-(q)$}
The group $G=\SO_{14}^-(q)$ has one non-principal unipotent $\Phi_4$-block
of positive defect, which we label by its $4$-Harish-Chandra source in a Levi
subgroup of type $\SO_6^-(q)$.

\begin{thm}   \label{thm:2D7,d=4}
 Assume that $q$ is odd. Then the $\ell$-modular decomposition matrix for the
 non-principal unipotent block of $\SO_{14}^-(q)$ of defect $\Ph4^2$, for
 $(q^2+1)_\ell>5$, is as given in Table~\ref{tab:2D7,d=4,bl2}.
\end{thm}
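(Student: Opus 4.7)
The plan is to follow the template of Theorem~\ref{thm:D7,d=4}(b) for the untwisted non-principal block, adapted to the twisted setting and making use of the previously determined decomposition matrices of $\SO_{10}^-(q)$ (Theorem~\ref{thm:2D5,d=4}) and $\SO_{12}^-(q)$. First, I would apply (HCi) systematically from all proper $4$-split Levi subgroups (those of type $A_k$ and of type $\tw2D_k$ with $k\le 6$) to produce most of the projective characters $\Psi_i$ corresponding to the columns of Table~\ref{tab:2D7,d=4,bl2}. The principal series columns are in addition controlled by (End) via the decomposition matrix of the Hecke algebra of type $B_6$ with parameters $(q^2,q)$, which is known for $(q^2+1)_\ell>5$. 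For PIMs that appear only inside sums, I would apply (Sum) to extract them, verifying indecomposability of each resulting $\Psi_i$ by checking via (HCr) that no proper subsum Harish-Chandra restricts non-negatively to PIMs on every proper Levi subgroup.

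The parameter $a\in\{0,1\}$ from Theorem~\ref{thm:2D5,d=4} will propagate via (HCi) from the $\SO_{10}^-(q)$-Levi into the $\tw2D_2$-Harish-Chandra series of the present block, producing projective characters of the shape $\Psi_i-(1-a)(\Psi_j-\Psi_k)$ and $\Psi_\ell+(1-a)(\Psi_j-\Psi_k)$ for appropriate indices. The expectation, in perfect parallel with the proof of Theorem~\ref{thm:D7,d=4}, is that the positivity of a certain Brauer character degree computed from the inverse of the decomposition matrix (argument (Deg)) will force $a=0$ (or, equivalently, will exclude $a=1$), so that this step also closes the open parameter in Theorem~\ref{thm:2D5,d=4}.

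To handle the cuspidal Brauer characters predicted by (Csp) (a Sylow $\Phi_4$-subgroup of $G$ is not contained in a proper Levi subgroup once all non-cuspidal Harish-Chandra series have been exhausted), I would invoke (GGGR) using Taylor's extension of Lusztig's results to good characteristic --- which is precisely why the hypothesis ``$q$ odd'' is imposed --- to get an approximation of the corresponding column(s) and to establish unitriangularity relative to the $a$-function. Lower bounds on the cuspidal decomposition numbers will come from (Red), applied to the Deligne--Lusztig induction of an $\ell$-character in general position of a torus of suitable order (existence of such a character is guaranteed by $(q^2+1)_\ell>5$), while matching upper bounds will come from (Cox) applied to a Coxeter element and from Harish-Chandra restriction of a GGGR of $\SO_{16}^-(q)$, exactly as was used to bound the tail entries in Theorem~\ref{thm:D7,d=4}.

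The main obstacle will be the interaction between the parameter $a$ and the cuspidal column(s): one needs the positivity relations (Deg) and the Coxeter/GGGR bounds to be simultaneously tight enough to pin down every entry. A secondary difficulty is that the $\tw2D_2$-cuspidal Harish-Chandra series is governed by a relative Hecke algebra of type $B_5$ with parameters $(q^4,q)$ whose semisimplicity modulo $\ell$ has to be verified locally (inside $\tw2D_6$ and $\tw2D_3\!\times\! A_2$) in order to make the (HCi) decompositions unambiguous; if the parameters do not lead to a semisimple block one would have to argue branch-by-branch as in the third block of $E_8$ in Theorem~\ref{thm:E8,d=4}. If after all of this some entry still resists, the fallback is to prove only an inequality and to record the residual indeterminate, paralleling the treatment of $(b_i,c,d)$ in Theorem~\ref{thm:D7,d=4}.
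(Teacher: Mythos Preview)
Your plan substantially overcomplicates the argument and contains two concrete misreadings of the situation.

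First, this is a non-principal block of defect $\Phi_4^2$, and a glance at Table~\ref{tab:2D7,d=4,bl2} shows that every column lies in a proper Harish-Chandra series (principal series, $\tw2D_2$, $A_3$, or the series above the cuspidal Brauer characters $.2^2$ and $.1^4$ of a Levi of type $\tw2D_5$). There are \emph{no} cuspidal Brauer characters in this block, so the entire (GGGR)/(Red)/(Cox) apparatus, as well as restriction of GGGRs from $\SO_{16}^-(q)$, is irrelevant here. The hypothesis ``$q$ odd'' is not imposed in order to build GGGRs inside $\SO_{14}^-(q)$; it is simply inherited from Theorem~\ref{thm:2D5,d=4}, whose PIMs (with the parameter $a$) are the input to (HCi).

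Second, your expectation that (Deg) will pin down $a$ and thereby close Theorem~\ref{thm:2D5,d=4} is wrong: the parameter $a\in\{0,1\}$ is \emph{not} determined here and remains explicitly in Table~\ref{tab:2D7,d=4,bl2}. The paper's proof is in fact a two-line affair: (HCi) from proper Levi subgroups already produces every $\Psi_i$ with $i\ne 11$, and additionally yields the projectives $\Psi_8+a\Psi_{10}+\Psi_{11}$ and $\Psi_{11}+(1-a)\Psi_{12}$; since $\Psi_8,\Psi_{10},\Psi_{12}$ are already known to be PIMs, (Sum) extracts $\Psi_{11}$ \emph{regardless} of whether $a=0$ or $a=1$. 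No (End), no Hecke-algebra semisimplicity analysis, and no case-by-case branching are needed.
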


\begin{table}[htbp]
\caption{$\SO_{14}^-(q)$, $q$ odd, block $\binom{0\,1\,3}{1}$, $(q^2+1)_\ell>5$}   \label{tab:2D7,d=4,bl2}
$$\vbox{\offinterlineskip\halign{$#$\hfil\ \vrule height10pt depth4pt&&
      \hfil\ $#$\hfil\cr
    5.1&                           \Ph3\Ph6& 1\cr
    1.5&     \hlf q^2\Ph3\Ph8\Ph{10}\Ph{14}& 1& 1\cr
   32.1&        \hlf q^2\Ph3\Ph5\Ph8\Ph{14}& 1& .& 1\cr
   321.&   \hlf q^2\Ph2^4\Ph6\Ph{10}\Ph{14}& .& .& .& 1\cr
 31^2.1&           q^4\Ph3^2\Ph6\Ph8\Ph{14}& .& .& .& 1& 1\cr
 2^21.1&      q^5\Ph3\Ph5\Ph6\Ph{10}\Ph{14}& .& .& 1& .& .& 1\cr
  1^3.3&          q^7\Ph5\Ph8\Ph{10}\Ph{14}& .& .& .& .& 1& .& 1\cr
   1.32&      q^9\Ph3\Ph5\Ph6\Ph{10}\Ph{14}& 1& 1& 1& .& .& .& .& 1\cr
  3.1^3&              q^{10}\Ph5\Ph8\Ph{10}& .& .& .& .& 1& .& .& .& 1\cr
 1.31^2&        q^{12}\Ph3^2\Ph6\Ph8\Ph{14}& .& .& .& 1& 1& .& 1& .& 1& 1\cr
   .321&\hlf q^{14}\Ph2^4\Ph6\Ph{10}\Ph{14}& .& .& .& 1& .& .& .& .& .& 1& 1\cr
 1.2^21&     \hlf q^{14}\Ph3\Ph5\Ph8\Ph{14}& .& .& 1& .& .& 1& .& 1& .& .& a& 1\cr
  1^5.1&  \hlf q^{14}\Ph3\Ph8\Ph{10}\Ph{14}& .& .& .& .& .& 1& .& .& .& .& .& .& 1\cr
  1.1^5&                     q^{28}\Ph3\Ph6& .& .& .& .& .& 1& .& .& .& .& a\pl2& 1& 1& 1\cr
\noalign{\hrule}
 \omit& & ps& \tw2D_2& ps& ps& ps& ps& \tw2D_2& \tw2D_2& \tw2D_2& \tw2D_2& .2^2& \tw2D_2& A_3& .1^4\cr
  }}$$
Here, $a \in \{0,1\}$ is as in Table~\ref{tab:2D5,d=4}, and all degrees have
been divided by $q^2\Ph4\Ph{12}$.
\end{table}

\begin{proof}
All columns but the 11th are obtained by (HCi), as well as
$\Psi_8+a\Psi_{10}+\Psi_{11}$ and $\Psi_{11}+(1-a)\Psi_{12}$. Thus,
independent of the value of $a$ we also recover $\Psi_{11}$ via (Sum).
\end{proof}

\subsection{Decomposition matrices for $\tw2E_6(q)$}
We now turn to the $\Phi_4$-blocks of the exceptional groups of type $\tw2E_6$.
There are 10 unipotent $\ell$-blocks of defect zero, one of cyclic defect and
the principal block.

\begin{thm}   \label{thm:2E6,d=4}
 Let $(q,6)=1$. The decomposition matrices for the unipotent $\ell$-blocks of
 $\tw2E_6(q)$ of positive defect, where $(q^2+1)_\ell>5$, are as given in
 Tables~\ref{tab:2E6,d=4} and~\ref{tab:2E6,d=4,def1}.
\end{thm}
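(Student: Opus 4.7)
The plan is to proceed along the three-step strategy of Section~\ref{sec:intro}, running in close parallel with Proposition~\ref{prop:E6,d=4} for the untwisted cousin $E_6(q)$ and with Theorem~\ref{thm:2D5,d=4} for a twisted classical example. The block with cyclic defect is settled by a direct Brauer-tree calculation from the character degrees and the $\ell$-modular structure of $N_G(P)/P$, so the real work concentrates on the principal block.

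For Step~1, I would generate as many PIMs as possible by Harish-Chandra induction (HCi) from proper $4$-split Levi subgroups, foremost the Levi of type $\tw2D_5(q)$ (whose decomposition matrix is Theorem~\ref{thm:2D5,d=4}), together with those of type $A_5(q)$, $\tw2D_4(q)\cdot T_1$ and smaller. The parameter $a$ of Table~\ref{tab:2D5,d=4} will be propagated by this induction and must be tracked throughout. The principal-series PIMs come from (End) applied to the Iwahori--Hecke algebra of the relative Weyl group of type $F_4$ with unequal parameters specialised at a primitive $4$th root of unity; its $\ell$-modular decomposition matrix is known in the literature. For the Harish-Chandra series above the cuspidal unipotent characters of the $\tw2D_4$-type Levis, the relative Hecke algebras have small rank and are semisimple after $\ell$-reduction under the hypothesis $(q^2+1)_\ell>5$, so (HCi) together with (HCr) and (Sum) should split off their individual constituents in the usual way. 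Indecomposability of everything built so far is verified by (HCr).

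For Step~2, the assumption $(q,6)=1$ guarantees that the underlying characteristic is good for $\tw2E_6$, so that Taylor's theorem \cite[Thm.~14.10]{Tay14} applies and (GGGR) can be used to sandwich each remaining (cuspidal) column inside a suitable generalized Gelfand--Graev representation. Combined with the explicit family-projection formula \cite[Thm.~6.5(ii)]{DLM14}, this should force the unitriangular shape of the decomposition matrix with respect to Lusztig's $a$-function and determine most entries within the relevant families. The Steinberg PIM is furnished by (St).

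For Step~3 I would apply (Cox), using the projective cover of each cuspidal unipotent Brauer character paired against the Coxeter Deligne--Lusztig character, to obtain upper bounds on the residual entries, and then apply (Red) to the Deligne--Lusztig induction of an $\ell$-character in general position from a Sylow $\Phi_4$-torus (which exists because $(q^2+1)_\ell>5$) to extract matching lower bounds. The main obstacle, as already signalled in the abstract, is that these bounds are not expected to meet for every entry: just as in Theorem~\ref{thm:2D5,d=4} and for similar reasons, a small number of cuspidal-column entries will remain parametric. The best that can be hoped for is to record those residual parameters explicitly in Tables~\ref{tab:2E6,d=4} and~\ref{tab:2E6,d=4,def1}, along with any nonlinear relations on them deduced from (Deg), from compatibility under (HCr) with the twisted Levi subgroups already treated, and from the inherited parameter $a$ of Table~\ref{tab:2D5,d=4}.
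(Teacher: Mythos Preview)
Your three-step outline is the right strategy and matches the paper's approach, but the specific Levi subgroups you propose to induce from are wrong, and this error propagates through the rest of your sketch.

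The group $\tw2E_6(q)$ has no Levi subgroup of type $\tw2D_5(q)$ or of type $A_5(q)$. With the $E_6$ diagram labelled so that the graph automorphism swaps $1\leftrightarrow 6$ and $3\leftrightarrow 5$ while fixing $2,4$, the $F$-stable standard Levi subgroups correspond to $\sigma$-stable subsets of nodes. Removing the orbit $\{2\}$ gives a Levi of type $\tw2A_5(q)$ (not split $A_5$); removing $\{1,6\}$ gives one of type $\tw2D_4(q)$; removing $\{2\}$ and $\{1,6\}$ gives $\tw2A_3(q)$; and so on. There is no way to obtain $\tw2D_5$. Consequently the parameter $a$ of Table~\ref{tab:2D5,d=4} cannot be ``propagated'' into $\tw2E_6$ by (HCi); the unknowns $c_i,d_j$ appearing in Table~\ref{tab:2E6,d=4} are intrinsic to $\tw2E_6$ and unrelated to $a$. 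Likewise the relevant non-principal modular Harish-Chandra series is not above a cuspidal of $\tw2D_4$ but above the cuspidal Brauer character of $\tw2D_2(q)\cong A_1(q^2)$ (which exists by (Csp) since $\ell\mid q^2+1$), with relative Weyl group of type $B_3$; this series and the principal series (governed by the Hecke algebra of type $F_4$ with unequal parameters, via Bremke's tables) account for all non-cuspidal PIMs, leaving four cuspidal columns $\Psi_7,\Psi_{12},\Psi_{15},\Psi_{16}$.

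Two further points. First, (Cox) alone is not enough in Step~3: the paper needs (DL) for the Coxeter element \emph{and} for two longer elements $s_1s_2s_3s_1s_4s_3$ and $s_1s_2s_4s_3s_1s_5s_4s_3s_6s_5s_4s_3$ to pin down the relations $c_9=4+2c_1+3c_4-3c_5+c_6-2c_7+2c_8$ and $d_4=-3-2d_2+2d_3$, matched against the lower bounds from (Red). Second, the GGGR step is more specific than you indicate: one needs the projections onto three particular families (the one containing $\tw2E_6[1]$, the family $\{\phi_{2,16}',\phi_{9,10},\phi_{1,12}'',\phi_{8,9}''\}$, and $\{\phi_{2,16}''\}$) via \cite[Thm.~6.5(ii)]{DLM14}, which is what yields the bounds $c_1\le 2$, $c_2=c_3=0$, $d_1=0$, $d_2\le 1$. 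Neither (Deg) nor (HCr) to $\tw2D_5$ plays a role.
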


\begin{table}[ht]
\caption{$\tw2E_6(q)$, $(q,6)=1$, $(q^2+1)_\ell>5$}   \label{tab:2E6,d=4}
$$\vbox{\offinterlineskip\halign{$#$\hfil\ \vrule height10pt depth4pt&&
      \hfil\ $#$\hfil\cr
  \phi_{1,0}&                                      1& 1\cr
 \phi_{2,4}'&                          q\Ph8\Ph{18}& .& 1\cr
  \phi_{9,2}&       \hlf q^3\Ph3^2\Ph8\Ph{10}\Ph{18}& 1& .& 1\cr
\phi_{1,12}'&     \hlf q^3\Ph8\Ph{10}\Ph{12}\Ph{18}& .& .& .& 1\cr
 \phi_{8,3}'&    \hlf q^3\Ph2^4\Ph6^2\Ph{10}\Ph{18}& .& 1& .& .& 1\cr
 \phi_{9,6}'&         q^6\Ph3^2\Ph6^3\Ph{12}\Ph{18}& .& .& .& 1& .& 1\cr
  \tw2E_6[1]&\sxt q^7\Ph1^4\Ph8\Ph{10}\Ph{12}\Ph{18}& .& .& .& .& .& .& 1\cr
\phi_{6,6}'&\thrd q^7\Ph3^2\Ph8\Ph{10}\Ph{12}\Ph{18}&.& .& .& .& 1& .&c_1& 1\cr
\phi_{6,6}''&\thrd q^7\Ph3^2\Ph6^3\Ph8\Ph{10}\Ph{12}& .& .& .& .& 1& .&  .& .& 1\cr
 \phi_{16,5}& \hlf q^7\Ph2^4\Ph6^2\Ph8\Ph{10}\Ph{18}& .& .& 1& .& .& 1&  .& .& .& 1\cr
 \phi_{9,6}''&      q^{10}\Ph3^2\Ph6^3\Ph{12}\Ph{18}& 1& .& 1& .& .& .&c_4& .& .& 1& 1\cr
\phi_{1,12}''&  \hlf q^{15}\Ph8\Ph{10}\Ph{12}\Ph{18}& 1& .& .& .& .& .&c_5& .& .& .& 1& 1\cr
 \phi_{9,10}&    \hlf q^{15}\Ph3^2\Ph8\Ph{10}\Ph{18}& .& .& .& 1& .& 1&c_6& .& .& 1& .&   .& 1\cr
 \phi_{8,9}''& \hlf q^{15}\Ph2^4\Ph6^2\Ph{10}\Ph{18}& .& 1& .& .& 1& .&c_7& 1& 1& .& .& d_2& .& 1\cr
 \phi_{2,16}''&                    q^{25}\Ph8\Ph{18}& .& 1& .& .& .& .&c_8& .& .& .& .& d_3& .& 1& 1\cr
 \phi_{1,24}&                                 q^{36}& .& .& .& 1& .& .&c_9& .& .& .& .& d_4&  1& .& 2& 1\cr
\noalign{\hrule}
 \omit& & ps& ps& ps& ps& ps& ps& \tw2E_6& \tw2D_2& \tw2D_2& \tw2D_2& \tw2D_2& c& \tw2D_2& \tw2D_2& c& c\cr
   }}$$
   Here, $c_1 \in \{0,1,2\}$, $d_2 \in \{0,1\}$, $c_9 = 4+2c_1+3c_4-3c_5+c_6-2c_7+2c_8$
   and $d_4 = -3-2d_2+2d_3$.
\end{table}

\begin{table}[ht]
\caption{$\tw2E_6(q)$, block of defect 1, $2\ne\ell|(q^2+1)$} \label{tab:2E6,d=4,def1}
$$\vbox{\offinterlineskip\halign{$#$
        \vrule height10pt depth 2pt width 0pt&& \hfil$#$\hfil\cr
 \phi_{4,1}& \vr& \phi_{4,7}''& \vr& \bigcirc& \vr& \phi_{4,13}& \vr& \phi_{4,7}'\cr
 & ps& & \tw2D_2& & \tw2D_2& & ps\cr
  }}$$
\end{table}

\begin{proof}
The Brauer tree for the block with cyclic defect is easily obtained, see also
\cite[Thm.~3.10]{Bre94}. We now discuss how to find projective characters
$\Psi_i$, for $i\in\{1$--$6,8$--$11,13,14,16\}$, with unipotent parts as given
in the columns of Table~\ref{tab:2E6,d=4}. (HCi) yields $\Psi_i$ with
$i\in\{1,3,4,6,11,13\}$, which are indecomposable by (HCr). Further,
we obtain $\Psi_2+\Psi_4$ and $\Psi_2+\Psi_3+\Psi_6$, yielding $\Psi_2$ by
(Sum). Similarly, $\Psi_5+2\Psi_6$ and $\Psi_5+\Psi_1+2\Psi_3$ lead to
$\Psi_5$; $\Psi_3+\Psi_8+\Psi_{11}$ and $\Psi_6+\Psi_8$ provide the projective
character $\Psi_8$, and $\Psi_3+\Psi_9$ and $\Psi_6+\Psi_9+\Psi_{13}$ lead
to $\Psi_9$. Furthermore, $\Psi_8+\Psi_{10}$ and $\Psi_9+\Psi_{10}$ give
$\Psi_{10}$, and $\Psi_{10}+\Psi_{14}$ and $\Psi_{11}+\Psi_{14}$ give rise
to $\Psi_{14}$. By inspection using (HCr) all projectives constructed so far
are indecomposable.   \par
We claim that we have now accounted for all non-cuspidal Harish-Chandra
series. Indeed, the decomposition numbers for the Hecke algebra of type $F_4$
with unequal parameters have been calculated in \cite[Thm.~3.10]{Bre94} for
all $\ell\ge5$, showing that there are six principal series PIMs. The
relative Weyl group of the cuspidal unipotent Brauer character of
$\tw2D_2(q)=A_1(q^2)$ has type $B_3$. We have already found all projective
indecomposable summands in the principal block of the Harish-Chandra induction
from proper Levi subgroups in that series, viz.~$\Psi_8,\Psi_9,\Psi_{10},
\Psi_{11},\Psi_{13}$ and $\Psi_{14}$. Two further PIMs in that series lie in
the block of cyclic defect. This accounts for all non-cuspidal Harish-Chandra
series. Hence the four missing columns must correspond to cuspidal Brauer
characters.
\par
For the remaining columns we consider the following three GGGRs, whose
existence is given by \cite[Thm.~6.5(ii)]{DLM14}, assuming that $p$ is good:
\begin{itemize}
 \item the GGGR associated to the family containing $\tw2E_6[1]$
  and with projection $\tw2E_6[1] + 2\phi_{6,6}'+\phi_{12,4}$ on this family;
 \item the GGGR associated to the family $\{\phi_{2,16}',\phi_{9,10},
  \phi_{1,12}'',\phi_{8,9}''\}$  and with projection
  $\phi_{1,12}''+\phi_{8,9}''$ on this family;
 \item the GGGR associated to the family $\{\phi_{2,16}''\}$.
\end{itemize}
>From (GGGR) we deduce the unitriangularity of the decomposition matrix.
In addition, if we denote by $c_1,\ldots, c_9$ the unknown entries in the $7$th
column then $c_2=c_3 = 0$ and $c_1 \leq 2$. Similarly, if $d_1,\ldots,d_4$
denote the entries in the $12$th column then $d_1 = 0$ and
$d_2 \leq 1$. The last unknown entry (in the $15$th column) will be denoted
by $d_5$.
\par
A Sylow $\Phi_4$-torus of $G$ has a regular $\ell$-character $\theta$ whenever
$(q^2+1)_\ell>5$. By (Red), the $\ell$-reduction of a non-unipotent character
induced from $\theta$ yields relations on the $c_i$'s and $d_i$'s, namely
$d_5 \geq 2$, $3+2d_2-2d_3+d_4 \geq 0$ and
$-4-2c_1-3c_4+3c_5-c_6+2c_7-2c_8+c_9 \geq 0$. To obtain the opposite
inequalities
we use (DL) successively for the elements $s_1 s_2 s_3 s_4$,
$s_1s_2s_3s_1s_4s_3$ and $s_1s_2s_4s_3s_1s_5s_4s_3s_6s_5s_4s_3$.
\end{proof}

As in Proposition \ref{prop:conjForD7}, we can use \cite[Conj.~1.2]{DM14} to
obtained conjectural upper bounds on the unknown entries in the decomposition
matrix which do not depend on $q$.

\begin{prop}
 Assume Conjecture~1.2 in \cite{DM14} holds. Then in the decomposition matrix
 of the principal $\Phi_4$-block of $\tw2E_6(q)$, we have $c_1 = 0$,
 $c_4 \leq 3$,
 $c_5 \leq 26$, $c_6 \leq 29$, $c_7 \leq 50$, $c_8 \leq 156$ and $d_3 \leq 6$. 
\end{prop}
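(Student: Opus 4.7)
The plan is to proceed in exact analogy with the proof of Proposition~\ref{prop:conjForD7}. Assuming \cite[Conj.~1.2]{DM14}, for each $w\in W$ and $\lambda\in\overline{\FF}_\ell$ the virtual character $Q_w[\lambda]$ (the Alvis--Curtis dual of the generalized $\lambda$-eigenspace of Frobenius on the intersection cohomology of $X_w$) is, up to a sign, the unipotent part of a projective character of $G=\tw2E_6(q)$. Expressing such a character on the basis of the PIMs $\Psi_1,\ldots,\Psi_{16}$ from Table~\ref{tab:2E6,d=4} produces coefficients that are affine functions of the seven free parameters $c_1,c_4,c_5,c_6,c_7,c_8,d_3$ (recall that $c_2=c_3=d_1=0$ and $c_9,d_4$ are already pinned to linear expressions in the other unknowns). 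Non-negativity of each coefficient then yields a linear inequality, and the statement follows by collecting enough such inequalities.

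The key step is therefore to exhibit pairs $(w,\lambda)$ whose associated linear inequalities are \emph{sharp} enough to give the stated bounds. First I would try a Coxeter element $w=s_1s_2s_3s_4s_5s_6$ (with $s_1,\ldots,s_6$ the simple reflections of the ambient Weyl group $W$) together with $\lambda=1$: by (Cox) we already know the multiplicity of the cuspidal PIM $\Psi_7$ in $(-1)^{\ell(w)}R_w$ is non-negative, but the finer $Q_w[1]$-statement provided by the conjecture should force $c_1=0$ directly (paralleling the elimination of $b_1,b_3,b_4,b_5,b_8$ in the $D_7$-proof). To control the $\tw2D_2$-cuspidal series and the remaining cuspidal columns indexed by $12,15,16$, I would then introduce longer elements adapted to the $\Phi_4$-local structure, e.g.\ a word of the form $w' = s_1s_3s_1s_2s_4s_3s_5s_4s_3s_6s_5s_4s_3$ with an eigenvalue of the form $\lambda = q^a$ chosen in the spectrum of Frobenius on $H^*_c(X_{w'})$ so that the resulting projective involves $\Psi_{12}$, $\Psi_{14}$, $\Psi_{15}$ and $\Psi_{16}$ non-trivially; this should produce the bounds on $c_5,c_6,c_7,c_8,d_3$, and a shorter element tailored to $\Psi_{11}$ should produce the bound on $c_4$.

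The main obstacle is combinatorial and computational rather than conceptual. In the twisted type $\tw2E_6$, Frobenius acts on the cohomology of the Deligne--Lusztig varieties with non-rational eigenvalues on the component indexed by the cuspidal character $\tw2E_6[1]$, so one must handle the generalized eigenspaces $Q_w[\zeta q^a]$ ($\zeta$ a suitable root of unity) with care when applying (DL) and the conjecture. Once this bookkeeping is carried out in \Chevie{}, the computation of $Q_w[\lambda]$ and its expansion on the $\Psi_i$ is mechanical, and the upper bounds $c_1\le 0$, $c_4\le 3$, $c_5\le 26$, $c_6\le 29$, $c_7\le 50$, $c_8\le 156$ and $d_3\le 6$ emerge as the tightest linear consequences of the resulting inequalities; combined with the non-negativity $c_1\ge 0$ already built into the decomposition matrix, this gives $c_1=0$ and proves the proposition.
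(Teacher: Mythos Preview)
Your overall strategy is exactly the one the paper uses: assume Conjecture~1.2 of \cite{DM14}, expand suitable $Q_w[\lambda]$ on the PIMs $\Psi_1,\ldots,\Psi_{16}$, and read off linear inequalities in the unknown entries. So methodologically there is nothing to correct.

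The gap is that your proposal is only a plan: you never verify that the particular elements $w$ you suggest produce the stated bounds, and in fact they do not match what the paper does. You propose the Coxeter element (length~$6$) to force $c_1=0$ via $\Psi_7$; the paper instead uses a length-$19$ element $w=1231454236542314356$ and looks at the coefficient of $\Psi_8$ in $Q_w[1]$, which equals $-36c_1$ and therefore forces $c_1=0$. Similarly, your element $w'$ of length~$13$ with an eigenvalue $q^a$ is speculative; the paper uses four further specific elements (of lengths $12$, $16$, $14$, and $6$) and eigenvalues $\lambda\in\{1,-1\}$ only, reading off the coefficients of $\Psi_{11},\Psi_{12},\Psi_{13},\Psi_{14},\Psi_{15}$ to obtain successively $c_4\le 3$, $c_5\le c_4+23$, $c_6\le 29$, $c_7\le 50$, $c_8\le 156$, and $d_3\le d_2+5\le 6$. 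Your remark about non-rational Frobenius eigenvalues attached to $\tw2E_6[1]$ is a red herring here: the paper gets all the bounds with $\lambda=1$ and $\lambda=-1$.

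In short: right method, but to make it a proof you must actually exhibit working pairs $(w,\lambda)$ and the resulting coefficients. The paper's choices show that rather long and carefully chosen $w$ are required; a Coxeter element alone is not asserted to suffice.
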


\begin{proof}
We consider virtual characters $Q_w$ afforded by the Alvis--Curtis dual of
the intersection cohomology of suitably chosen Deligne--Lusztig varieties.
In the following table, we give, for each element $w$ we consider, 
the multiplicity of a PIM $\Psi_i$ in $Q_w[\lambda]$. In order to simplify
notation, we denote $s_1,\ldots,s_6$ by $1,\ldots,6$. 
$$\begin{array}{r|c|c|l}
w & \lambda & i & \langle Q_w[\lambda], \varphi_i \rangle \\[4pt]\hline
\vphantom{\Big)}
1231454236542314356	& 1	& 8	& -36c_1 \\	
123423145431		& 1	& 11	& 3-c_4\\
			&	& 12	& 23+c_4-c_5\\[4pt]
2342314354316543	& 1	& 13	& 6(29-c_6) \\ 
			&	& 14	& 6(50+c_1-d_2(23+c_4-c_5)-c_7)\\[4pt]
23143154316543	& 1	& 15	& 2(106-c_1+c_7+(d_2-d_3)(23+c_4-c_5)-c_8)\\[4pt]
546542		& -1	& 15	& 5+d_2-d_3 \\[4pt]
\end{array}$$
Conjecture~1.2 in \cite{DM14} predicts that the entries in the last column
of the previous table are non-negative. We deduce that
$c_1 = 0$, $c_4 \leq 3$, $c_5 \leq c_4+23 \leq 26$, $c_6 \leq 29$,
$c_7 \leq 50+c_1-d_2(23+c_4-c_5) \leq 50$, 
$$c_8 \leq 106-c_1+c_7+(d_2-d_3)(23+c_4-c_5) \leq 106-c_1+c_7+d_2(23+c_4-c_5)
\leq 156$$
and $d_3 \leq d_2 + 5 \leq 6$ since $d_2 \in \{0,1\}$.
\end{proof}

\subsection{The $\Phi_4$-blocks in twisted groups}
As in the untwisted case we collect some data on the $\Phi_4$-blocks studied
above in Table~\ref{tab:4-blocks, twisted}.

\begin{table}[ht]
\caption{HC-series in $\Phi_4$-blocks of twisted groups}    \label{tab:4-blocks, twisted}
$$\begin{array}{r|cc|ccccccccc}
 G&  W_G(b)& |\IBr(b)|& ps& \tw2D_2& A_3& .2^2& .1^4& c\\
\hline
\tw2D_5&    G(4,1,2)& 14&  5&  6& 1&  1& 1&  \\
\tw2D_7&            &   &  5&  6& 1&  1& 1&  \\
\hline
\tw2E_6&         G_8& 16&  6&  6&  &   &  & 4\\
\end{array}$$
\end{table}

Note that the decomposition matrices for the blocks in $\tw2D_5$ and in
$\tw2D_7$ of defect $\Phi_4^2$ coincide after permuting rows and columns
suitably. Again, it would be interesting to see whether this is caused by a
Morita equivalence between these blocks. On the other hand, the multisets of
entries of the matrices for twisted groups differ from those for any of the
untwisted ones, so if there exits a Morita equivalence between blocks
for twisted and untwisted groups, it would have to be with respect to a
different choice of basic sets.

\section{Decomposition matrices for symplectic groups and $F_4(q)$\label{sec:F4}}

We now turn to groups with non-simply laced Dynkin diagram, where we start by
giving (approximations) to decomposition matrices for the unipotent blocks of
small rank symplectic groups $\Sp_{2n}(q)$ for primes $\ell|(q^2+1)$.
Again, it is not known a priori in our present situation that the
decomposition matrix has triangular shape. This leads to additional
complications.

For completeness and for use in the subsequent proofs, we recall the known
Brauer trees for $\Sp_4(q)$ and $\Sp_6(q)$ (see \cite{FS2}). We
also indicate the modular Harish-Chandra series of the PIMs.

\begin{thm}[Fong--Srinivasan]  \label{thm:C2}
 Let $2\ne\ell|(q^2+1)$ be a prime. Then the Brauer trees for the unipotent
  $\ell$-blocks of $\Sp_4(q)$ and $\Sp_6(q)$ are as given in
  Table~\ref{tab:C2u3,d=4}.
\end{thm}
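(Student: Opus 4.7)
The plan is to exploit the fact that $\Phi_4(q)$ occurs with multiplicity exactly one in the orders of $\Sp_4(q)$ and $\Sp_6(q)$, so that a Sylow $\ell$-subgroup is cyclic for every $\ell\mid(q^2+1)$. Consequently every unipotent $\ell$-block of positive defect has cyclic defect group and its decomposition theory is encoded by a Brauer tree in the sense of Brauer--Dade. The partition of the unipotent characters into $\ell$-blocks is provided by \cite{BMM}, so for each block I already know the set of edges and can focus on producing the tree itself.

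I would then handle each block in turn. For the principal block the trivial character is an extremal leaf connected via the principal-series PIM obtained by (HCi) from the Borel subgroup, and the Steinberg PIM of (St) supplies a second extremal leaf. Repeated (HCi) from intermediate Levi subgroups of type $A_1$, $C_1\times A_1$ or $C_2$ produces the remaining principal-series PIMs, whose unipotent parts are sums of two consecutive edges of the tree and hence reveal adjacencies. The existence of a cuspidal Brauer character in each block is read off from (Csp); when present, the associated cuspidal PIM sits at a leaf, and I would identify it by computing Harish-Chandra restrictions via (HCr). For the non-principal unipotent block of positive defect occurring in $\Sp_6(q)$ the same scheme applies, starting from the $4$-Harish-Chandra source in a proper $\Phi_4$-split Levi.

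The final step is to fix the planar embedding and to locate the exceptional vertex. The exceptional vertex is the ordinary character whose degree carries the largest $\ell$-power, and with the defect groups cyclic of order $(q^2+1)_\ell$ this is readily read off from the generic degrees of the unipotent characters in the block. Since the trees involved are very small (essentially a line of length at most $4$, possibly with one $Y$-shaped branch point for $\Sp_6(q)$), the cyclic ordering around any branch vertex can be pinned down from the Harish-Chandra series labels together with (Cox): the sign of the multiplicity of the cuspidal PIM in the Coxeter Deligne--Lusztig character forces the correct cyclic order of the edges incident to the cuspidal leaf.

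The hardest part, in a self-contained proof, is precisely this planar-embedding step at a branch vertex, since the knowledge of the projectives and their Harish-Chandra restrictions alone does not force the cyclic order. I would resolve any remaining ambiguity by combining (Red), applied to the $\ell$-reduction of a Deligne--Lusztig character $R_{T_w}^G(\theta)$ for a regular $\ell$-character $\theta$ of a $\Phi_4$-torus, with (DL) applied to a carefully chosen non-Coxeter element of small length in $W$; these two inputs provide matching upper and lower bounds for the disputed multiplicities. In practice, of course, the result is due to Fong--Srinivasan \cite{FS2}, and in the body of the paper one simply appeals to their computation rather than redoing it by the above route.
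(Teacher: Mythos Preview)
The paper gives no proof of this theorem: it is stated as a result of Fong--Srinivasan and simply referenced to \cite{FS2}. You correctly acknowledge this in your final sentence, so in that sense your proposal matches the paper exactly.

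Your sketch of how one \emph{could} re-derive the trees with the paper's toolkit is reasonable in outline, but a few details are off. First, the trees in Table~\ref{tab:C2u3,d=4} are all straight lines; there is no branch vertex in $\Sp_6(q)$, so the entire discussion of planar embeddings, cyclic orderings, and the use of (Cox)/(Red)/(DL) to resolve ambiguities at a branch point is moot here. Second, your identification of the exceptional vertex as ``the ordinary character whose degree carries the largest $\ell$-power'' is not correct: in a block with cyclic defect all ordinary irreducibles have height~$0$, hence the same $\ell$-part in their degree. The exceptional vertex is the node carrying the $(\lvert D\rvert-1)/e$ non-unipotent characters of the block; in the present setting the unipotent characters are exactly the non-exceptional vertices, and the exceptional node is located simply as the remaining vertex once the unipotent ones are placed. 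Third, (Csp) tells you whether $G$ itself has a cuspidal Brauer character, not whether a given block does; the cuspidal labels in the table (e.g.\ the ``c'' under $.1^2$ for $\Sp_4(q)$) are read off from the Harish-Chandra series, not from (Csp). None of this affects the bottom line, since the paper defers entirely to \cite{FS2}.
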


\begin{table}[ht]
\caption{$\Sp_4(q)$ and $\Sp_6(q)$, $2\ne\ell|(q^2+1)$}  \label{tab:C2u3,d=4}
$$\begin{array}{cccccccccccc}
\Sp_4(q):\qquad& 2.& \vr& 1.1& \vr& .1^2& \vr& \bigcirc& \vr& C_2\cr
 & & ps& & ps& & c& & C_2\cr
 &   & & \\
\Sp_6(q):\qquad& 3.& \vr& 1.2& \vr& .21& \vr& \bigcirc& \vr& C_2:1^2\cr
\cr
 & 21.& \vr& 1^2.1& \vr& .1^3& \vr& \bigcirc& \vr& C_2:2\cr
 & & ps& & ps& & .1^2& & C_2\cr
\end{array}$$
\end{table}

Next, let $G=\Sp_8(q)$.

\begin{thm}   \label{thm:C4}
 Let $\ell$ be a prime. Assume that $q$ is odd. Then the decomposition
 matrices for the unipotent $\ell-$ blocks of $\Sp_8(q)$, $(q^2+1)_\ell>5$,
 are as given in Tables~\ref{tab:C4,d=4} and~\ref{tab:C4,d=4,def1}.
\end{thm}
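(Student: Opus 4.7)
The plan is to follow the template established in the previous sections for orthogonal and untwisted exceptional groups. First, I would use \cite{BMM} to read off the subdivision of unipotent characters of $G=\Sp_8(q)$ into $\ell$-blocks: a principal block of maximal defect $\Phi_4^2$, together with blocks of smaller defect, some of which are cyclic and some of which have defect zero. Blocks of cyclic defect are handled directly by writing down their Brauer trees, in the style of Theorem~\ref{thm:C2}, using (HCi), (HCr) and (Csp) to decide which characters are connected and which are cuspidal. The proper Levi subgroups of $G$ are of types $C_3$, $C_2A_1$, $A_3$ and their sub-Levis, and for each of these the decomposition matrices of the unipotent blocks are already known, either from Theorem~\ref{thm:C2}, from the classical-group results of Section~\ref{sec:orth} through the embedding of $A_3$-type Levis, or by induction.

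Within each non-cyclic block, I would build the unipotent parts of the PIMs via the standard sequence (HCi), (End), (Sum), (St), verifying indecomposability with (HCr) throughout. Concretely, (End) pins down the principal-series PIMs from the $\Phi_4$-modular decomposition matrix of the Iwahori--Hecke algebra of type $C_4$ with parameters $q^2$ and~$q$; (HCi) from the Levis of types $C_3$ and~$A_3$ yields most of the remaining projectives; and (Sum) is used to split off common summands whenever two Harish-Chandra induced projectives share a constituent. The PIMs in non-principal Harish-Chandra series --- notably the $C_2$-series over the cuspidal unipotent character of $\Sp_4(q)$, whose $\ell$-reduction is irreducible by Theorem~\ref{thm:C2} so that (End) applies --- are controlled by the corresponding relative Hecke algebra, whose parameters can be read off locally inside the Levis of types $C_3$ and $C_2A_1$.

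For the columns corresponding to cuspidal unipotent Brauer characters, I would invoke (GGGR). Since $q$ is odd the underlying characteristic is good for $C_4$, so Taylor's extension \cite[Thm.~14.10]{Tay14} of Lusztig's results is available, and I would choose GGGRs attached to the special unipotent classes dual to the special characters of the relevant families. These yield an approximation to the missing columns and, by the $a$-value argument already exploited in the proof of Theorem~\ref{thm:D7,d=4}, force unitriangularity of the decomposition matrix with respect to Lusztig's $a$-function. Where several characters of one family remain in competition I would use \cite[Thm.~6.5(ii)]{DLM14} together with Mellin transforms on the small finite group attached to the family to compute the projection of each GGGR onto that family. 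Any entries still left open would then be squeezed between upper bounds coming from (Cox) and (DL) applied to carefully chosen elements of the Weyl group of~$C_4$, and lower bounds coming from (Red) applied to Deligne--Lusztig characters induced from $\ell$-characters in general position on a $\Phi_4$-torus; (Deg) would discard any configurations producing a negative Brauer degree.

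The main obstacle, and the reason for the disclaimer in the introduction that some entries are left undetermined in type $C_4$, is that this toolkit is not strong enough to completely pin down the decomposition matrix of the principal $\Phi_4$-block of $\Sp_8(q)$. I expect a small handful of entries --- most naturally in the family containing the cuspidal unipotent characters of $\Sp_8(q)$, and involving the deepest PIMs in the $C_2$- and cuspidal Harish-Chandra series --- to survive as genuine indeterminates, so that the assertion of the theorem will only be a list of relations and congruences satisfied by these parameters, entirely analogous to what was achieved in Tables~\ref{tab:D7,d=4b} and~\ref{tab:2E6,d=4}.
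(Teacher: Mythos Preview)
Your outline matches the paper's approach closely: the block structure, the use of (HCi) and (HCr) to obtain and certify the non-cuspidal PIMs (in fact the paper gets all of $\Psi_1,\ldots,\Psi_9,\Psi_{11},\Psi_{13}$ directly from (HCi) without needing (Sum) or (End) here), (St) for the Steinberg PIM, (GGGR) and \cite[Thm.~6.5(ii)]{DLM14} for the two remaining cuspidal columns, and then (Cox), (Red) and (DL) to obtain the relations $a_1=a_2$, $a_4=a_1+a_3+2$, $a_5=0$, $a_6\le3$.

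There is one concrete step you are missing, and it matters for the final shape of the table. Your toolkit, applied only inside $\Sp_8(q)$, will leave \emph{three} free parameters $a_1,a_3,a_6$ rather than the two parameters $a,b$ actually appearing in Table~\ref{tab:C4,d=4}. To kill $a_3$ the paper passes to the overgroup $\Sp_{10}(q)$: it takes the GGGR attached to the family $\{\rho_{1^4.1},\rho_{1^2.1^3},\rho_{.2^21},\rho_{C_2:.21}\}$ whose projection to that family is $\rho_{C_2:.21}+\rho_{1^4.1}$, cuts by the appropriate block, and then Harish-Chandra restricts back to $\Sp_8(q)$; this forces $a_3=0$. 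This ``borrow a GGGR from an overgroup and restrict'' manoeuvre is the same device already used in the proofs of Theorems~\ref{thm:D7,d=4} and~\ref{thm:2D5,d=4}, so you should invoke it explicitly here as well. Without it your stated conclusion would be weaker than what the theorem asserts.
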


\begin{table}[ht]
\caption{$\Sp_8(q)$, $(q^2+1)_\ell>5$}   \label{tab:C4,d=4}
$$\vbox{\offinterlineskip\halign{$#$\hfil\ \vrule height10pt depth4pt&&
      \hfil\ $#$\hfil\cr
      4.&                           1& 1\cr
      .4&              \hlf q\Ph6\Ph8& .& 1\cr
     31.&              \hlf q\Ph3\Ph8& 1& .& 1\cr
     1.3&      \hlf q^2\Ph2^2\Ph6\Ph8& 1& 1& .& 1\cr
C_2:1^2.&      \hlf q^2\Ph1^2\Ph3\Ph8& .& .& .& .& 1\cr
   21^2.&        \hlf q^4\Ph3\Ph6\Ph8& .& .& 1& .& .& 1\cr
     .31&        \hlf q^4\Ph3\Ph6\Ph8& .& 1& .& 1& .& .& 1\cr
 C_2:1.1&\hlf q^4\Ph1^2\Ph2^2\Ph3\Ph6& .& .& .& .& 1& .& .& 1\cr
   1^2.2&             q^4\Ph3\Ph6\Ph8& 1& .& 1& 1& .& .& .& .& 1\cr
  C_2:.2&      \hlf q^6\Ph1^2\Ph3\Ph8& .& .& .& .& .& .& .& 1& .& 1\cr
   1^3.1&      \hlf q^6\Ph2^2\Ph6\Ph8& .& .& 1& .& .& 1& .& .& 1& a& 1\cr
    1^4.&            \hlf q^9\Ph6\Ph8& .& .& .& .& .& 1& .& .& .& a& 1& 1\cr
   .21^2&            \hlf q^9\Ph3\Ph8& .& .& .& 1& .& .& 1& .& 1& .& .& .& 1\cr
    .1^4&                      q^{16}& .& .& .& .& .& .& .& .& 1&a\pl2& 1& b& 1& 1\cr
\noalign{\hrule}
  & & ps& ps& ps& ps& C_2& ps& .1^2& C_2& ps& c& A_3& c& .1^2& c\cr
  }}$$
  Here, $a \in \{0,1\}$ and $b \in \{0,1,2,3\}$.
\end{table}

\begin{table}[ht]
\caption{$\Sp_8(q)$, blocks of defect 1, $2\ne\ell|(q^2+1)$} \label{tab:C4,d=4,def1}
$$\vbox{\offinterlineskip\halign{$#$
        \vrule height10pt depth 2pt width 0pt&& \hfil$#$\hfil\cr
 3.1& \vr& 2.2& \vr& .2^2& \vr& \bigcirc& \vr& C_2\!:\!.1^2\cr
 \cr
 2^2.& \vr& 1^2.1^2& \vr& 1.1^3& \vr& \bigcirc& \vr& C_2\!:\!2.\cr
 & ps& & ps& & .1^2& & C_2\cr
  }}$$
\end{table}

\begin{proof}
The group $G=\Sp_8(q)$ has three unipotent blocks of $\Phi_4$-defect zero, two
blocks of $\Phi_4$-defect~1 with four characters each, and all other unipotent
characters lie in the principal block. The Brauer trees for the blocks of
defect~1 are known by \cite{FS2} and in any case can easily be recovered by
(HCi).
\par
The projective modules $\Psi_i$ for $i \in \{1,..,9,11,13\}$ are obtained
by (HCi). Using (HCr), we can check that there are indecomposable. Finally,
(St) yields the last column, leaving only two (necessarily cuspidal)
PIMs to determine. A first approximation of these columns is given by (GGGR)
for the families $\{\rho_{1^3.1},\rho_{1^2.1^2},\rho_{.2^2},\rho_{C_2:.2}\}$ and
$\{\rho_{1^4.},\rho_{1.1^3},\rho_{.21^2},\rho_{C_2:.1^2}\}$.
We deduce that the relevant submatrix for the last five projectives
now has the form
$$\vbox{\offinterlineskip\halign{$#$\hfil\ \vrule height10pt depth4pt&&
      \hfil\ $#$\hfil\cr
C_2:.2&     1\cr
 1^3.1&   a_1& 1\cr
  1^4.&   a_2& 1& 1\cr
 .21^2&   a_3& .& a_5& 1\cr
  .1^4&   a_4& 1& a_6& 1& 1\cr
  }}$$
In addition, \cite[Thm.~6.5(ii)]{DLM14} yields $a_1 \in\{0,1\}$ and $a_5 =0$.
\par
As usual, relations on the $a_i$'s are obtained by looking at suitable
Deligne--Lusztig characters: from (Cox) we deduce that $a_1-a_2 \geq 0$ and
$a_1+a_3-a_4 + 2 \geq a_6(a_1-a_2)$. But from (Red) with the induction of an
$\ell$-character of a Sylow $\Phi_4$-torus in general position, which exists
whenever $(q^2+1)_\ell>5$, we get $a_1+a_3-a_4 +2  \leq a_2-a_1$. Consequently,
$0 \leq a_6(a_1-a_2) \leq a_2-a_1\leq 0$ which forces $a_1 = a_2$ and
$a_4 = a_1+a_3+2$. With (DL) applied to the element
$w = s_1s_2s_3s_2s_1s_2s_3s_4$ of the Weyl group we obtain $a_6 \leq 3$.
\par
Finally, we use the GGGR of $\Sp_{10}(q)$ associated to the family
$\{\rho_{1^4.1},\rho_{1^2.1^3},\rho_{.2^21},\rho_{C_2:.21}\}$ with projection
$\rho_{C_2:.21}+\rho_{1^4.1}$ to this family. Cut by the block containing
these two characters, the unipotent part of this GGGR is of the form
$\rho_{C_2:.21}+\rho_{1^4.1}+\alpha\rho_{.1^5}$. Its Harish-Chandra restriction
to $G$ forces $a_3 = 0$. Setting $a=a_1$ and $b=a_6$,
we obtain the decomposition matrix as shown in Table~\ref{tab:C4,d=4}
\end{proof}

\begin{rem}
Kawanaka's conjecture \cite[Conj.~2.4.5]{Ka87} would imply $a=0$ (see also
Remark \ref{rem:kawanaka}).
\end{rem}

\begin{thm}   \label{thm:F4,d=4}
 Let $\ell$ be a prime. The decomposition matrix for the principal $\ell$-block
 of $F_4(q)$, $(q,6)=1$, $(q^2+1)_\ell>5$, is as given in Table~\ref{tab:F4,d=4}.
\end{thm}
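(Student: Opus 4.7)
The plan is to follow the strategy developed in the preceding theorems, combining Harish--Chandra methods with GGGRs and suitable Deligne--Lusztig virtual characters. First, I would use (HCi) to induce projective characters from the two maximal proper Levi subgroups of $F_4(q)$, of types $B_3$ and $C_3$; for $\ell\mid q^2+1$ these Levis have Sylow $\ell$-subgroups of cyclic $\Phi_4$-defect (so their $\ell$-modular decomposition matrices are essentially encoded by Brauer trees, cf.~Theorem~\ref{thm:C2} for $\Sp_6(q)$ and the analogue for $\SO_7(q)$). Simultaneously, (End) applied to the $\Phi_4$-modular decomposition matrix of the equal-parameter Iwahori--Hecke algebra of type $F_4$ -- which by \cite[Thm.~3.10]{GM09} agrees with the characteristic-$\ell$ version for $\ell\ge 5$ -- yields all principal-series PIMs.

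Second, I would apply (Sum) to split any reducible projectives obtained above, in particular those induced along the Harish--Chandra series of the cuspidal unipotent character of a Levi of type $B_2$, whose relative Weyl group in $W(F_4)$ is again of type $B_2$ with Hecke parameters that can be determined locally inside the $B_3$ and $C_3$ Levis. Repeated application of (HCr) then confirms indecomposability of each candidate PIM. By (Csp), $F_4(q)$ admits cuspidal unipotent Brauer characters, so after exhausting all non-cuspidal modular Harish--Chandra series the remaining columns of the decomposition matrix must correspond to cuspidals.

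The hypothesis $(q,6)=1$ ensures that $p$ is good for $F_4$, so Taylor's extension of Lusztig's results makes GGGRs available. I would then apply (GGGR), combined with the Mellin-transform machinery of \cite[Thm.~6.5(ii)]{DLM14} whenever the family in question satisfies its hypotheses, to the special unipotent classes dual to the special characters of the families meeting the principal block. This should establish unitriangularity of the decomposition matrix and reduce each missing column to a small number of explicit unknown parameters.

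The main obstacle will be pinning down these residual parameters. For each one I would combine (Cox) or (DL), applied to carefully chosen elements of $W(F_4)$ (starting with a Coxeter element and then ascending the Bruhat order as needed), to produce upper bounds, paired with (Red), applied to the Deligne--Lusztig induction of an $\ell$-character in general position on the Sylow $\Phi_4$-torus (whose existence is guaranteed by $(q^2+1)_\ell>5$), to produce matching lower bounds. As in the $\SO_{14}^+(q)$ and $\tw2E_6(q)$ analyses, I expect that a handful of entries -- notably those involving unipotent characters in the large $F_4$-family containing the exceptional cuspidals $F_4[\pm 1]$, $F_4[\pm i]$ and $F_4[\theta^{\pm 1}]$ -- will remain undetermined and have to be recorded as parameters subject to explicit linear constraints, rather than being fixed exactly by the present methods.
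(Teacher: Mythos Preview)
Your plan is essentially the same strategy the paper uses, and it would work. The paper simply abbreviates your first three paragraphs by citing K\"ohler's thesis \cite{Koe06} for the initial approximation (which was obtained by exactly the methods you describe: Harish--Chandra induction from the $B_3$ and $C_3$ Levis, the Hecke-algebra information, and GGGRs to establish unitriangularity), and then carries out the (Red)/(DL) endgame in detail. Specifically, the paper uses $\ell$-reductions of non-unipotent characters with centralizer types $B_2\cdot(q^2+1)$ and $(q^2+1)^2$ for the lower bounds, and (DL) for $w=s_1s_2s_3s_4$, $w'=w^2$, and $w''=w^3$ for the upper bounds; the interplay of these forces several equalities among the parameters (e.g.\ $c_2=c_1-1$, $e=2$, $b_2=2b_1-3$, $c_4=c_1+2c_3-2$) while leaving $a_1,a_2,b_1,c_1,c_3,d$ only bounded, exactly as you anticipate.

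One small factual slip: the cuspidals $F_4[\theta^{\pm1}]$ do not lie in the principal $\Phi_4$-block (their degree is divisible by $\Phi_4^2$), so the residual unknowns involve only $F_4^{I}[1]$, $F_4^{II}[1]$, $F_4[\pm I]$ and the $B_2$-series characters. Also, note that the relative Weyl group of the $B_2$-cuspidal in $F_4$ is actually dihedral, but the relevant Hecke algebra has unequal parameters $q^4,q$ determined locally; this is what controls the two $B_2$-series PIMs you will find.
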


For the unipotent blocks with cyclic defect, see \cite[Lemma~5.4]{Koe06}.

\begin{table}[ht]
\caption{$F_4(q)$, $(q,6)=1$, $(q^2+1)_\ell>5$}   \label{tab:F4,d=4}
$$\vbox{\offinterlineskip\halign{$#$\hfil\ \vrule height10pt depth4pt&&
      \hfil\ $#$\hfil\cr
 \phi_{1,0}&                                      1& 1\cr
 \phi_{4,1}&                 \hlf q\Ph2^2\Ph6^2\Ph8& .& 1\cr
     B_2:2.&                 \hlf q\Ph1^2\Ph3^2\Ph8& .& .& 1\cr
 \phi_{9,2}&                 q^2\Ph3^2\Ph6^2\Ph{12}& 1& 1& .& 1\cr
\phi_{12,4}& \frac{1}{24}q^4\Ph2^4\Ph3^2\Ph8\Ph{12}& .& 1& .& 1& 1\cr
 \phi_{4,8}&  \frac{1}{8}q^4\Ph2^4\Ph6^2\Ph8\Ph{12}& .& .& .& 1& .& 1\cr
\phi_{6,6}''&      \thrd q^4\Ph3^2\Ph6^2\Ph8\Ph{12}& 1& .& .& 1& .& .& 1\cr
 B_2:1.1&\frac{1}{4}q^4\Ph1^2\Ph2^2\Ph3^2\Ph6^2\Ph8& .& .& 1& .& .& .& .& 1\cr
F_4^{II}[1]& \frac{1}{24}q^4\Ph1^4\Ph6^2\Ph8\Ph{12}& .& .& .& .& .& .& .& .& 1\cr
   F_4^I[1]&  \frac{1}{8}q^4\Ph1^4\Ph3^2\Ph8\Ph{12}& .& .& .& .& .& .& .& .& .& 1\cr
    F_4[-I]& \frac{1}{4}q^4\Ph1^4\Ph2^4\Ph3^2\Ph6^2& .& .& .& .& .& .& .& .& .& .& 1\cr
     F_4[I]& \frac{1}{4}q^4\Ph1^4\Ph2^4\Ph3^2\Ph6^2& .& .& .& .& .& .& .& .& .& .& .& 1\cr
\phi_{9,10}&              q^{10}\Ph3^2\Ph6^2\Ph{12}& .& .& .& 1& 1& 1& 1& .& .& .& c_1& c_1& 1\cr
\phi_{4,13}&            \hlf q^{13}\Ph2^2\Ph6^2\Ph8& .& .& .& .& 1& .& .& .& a_1& .& c_1\mn1& c_1\mn1& 1& 1\cr
   B_2:.1^2&            \hlf q^{13}\Ph1^2\Ph3^2\Ph8& .& .& .& .& .& .& .& 1& .& b_1& c_3& c_3& .& .& 1\cr
\phi_{1,24}&                                   q^24& .& .& .& .& .& .& 1& .& a_2& 2b_1\mn3& c_4& c_4& 1& d& 2& 1\cr
\noalign{\hrule}
  & & ps& ps& B_2& ps& ps& .1^2& ps& B_2& c& c& c& c& .1^2& c& c& c\cr
  }}$$
  Here, $a_1 \leq 5$, $a_2 \leq 13+(5-a_1)d$, $c_3 \in \{0,1\}$, $c_4 = c_1+2c_3-2$  and $d \in \{0,1,2\}$.
\end{table}

\begin{proof}
We start from the approximation to the decomposition matrix which was obtained
in the thesis of K\"ohler \cite{Koe06}. The relevant submatrix for the last
eight projectives has the following form:
$$\begin{array}{l|cccccccc}
F_4^{II}[1]&    1\cr
   F_4^I[1]&   .&   1\cr
    F_4[-I]&   .&   .&    1\cr
     F_4[I]&   .&   .&   .& 1\cr
\phi_{9,10}&   .&   .& c_1& c_1& 1\cr
\phi_{4,13}& a_1&   .& c_2& c_2& 1& 1\cr
   B_2:.1^2&   .& b_1& c_3& c_3& .& .& 1\cr
\phi_{1,24}& a_2& b_2& c_4& c_4& 1& d& e& 1\cr
\end{array}$$
First relations come from the $\ell$-reduction of non-unipotent characters.
For each uniform character $\rho\in\cE(G,(s))$ we construct,
we give in the table below the type of $C_\bG(s)$, the Jordan correspondent
$\rho_s$ of $\rho$ and the relations that we will use:
$$\begin{array}{c|c|c} C_\bG(s) & \rho_s  & \text{relations} \\\hline
B_2 \cdot (q^2+1) & \rho_{1^2.}+\rho_{B_2} &  c_2 \geq c_1-1 \\\hline
(q^2+1)^2 & 1 &  b_2 \geq 2b_1-3 \\
			& & c_4 \geq 3c_1-2c_2+2c_3-4 \\
			& & e\geq 2 \\
\end{array}$$
Now we apply (DL) successively to obtain relations on the unknown entries.
Starting with the Deligne--Lusztig character associated with a Coxeter element
$w$ we find the non-negative coefficients $c_1-c_2-1$, $3-2c_3$ and
$2+2c_1-2c_4-3e+2c_3e$ in $R_w$ of the PIMs corresponding to columns
9,10,14~and~16,
so that $c_2 = c_1-1$ and $c_3\in \{0,1\}$. The second relation can then be
written $-2(4-3c_1+2c_2-2c_3+c_4)+(3-2c_3)(2-e) \geq 0$. Since it is a sum of
two nonpositive integers, we obtain $4-3c_1+2c_2-2c_3+c_4 =0$
and $(3-2c_3)(2-e) = 0$, so that $c_4 = c_1+2c_3-2$ and $e=2$. Consequently,
the PIMs corresponding to the columns $9$, $10$, $14$ and $16$ do not occur
in $R_w$.
\par
We continue with the Deligne--Lusztig character associated with
$w' =  s_1s_2s_3s_4s_1s_2s_3s_4$. Using (DL) we find $2b_1-b_2-3 \geq 0$
which forces $b_2 = 2b_1-3$ by the previous inequalities. In addition,
the PIMs corresponding to the columns $9$, $14$ and $16$ still do not occur
in $R_{w'}$.
\par
Finally, with $w'' = s_1s_2s_3s_4s_1s_2s_3s_4s_1s_2s_3s_4$  we consider
the characters $R_{w''}[\lambda]$ for various eigenvalues $\lambda$ of $F$.
The multiplicities of the $14$th and $16$th PIM in these virtual
characters yield the relations
$$a_1 \leq 5,\quad d \leq 2,\quad a_2 \leq 13+ (a_1-5) d$$
(in particular $a_2 \leq 13$).
\end{proof}

Up to finitely many possibilities, the decomposition matrix given above depends
only on two unknown parameters, viz.~$b_1,c_1$. Moreover, these could be
bounded above by suitable polynomials in $q$ using GGGRs. As in
Proposition~\ref{prop:conjForD7}, we can also produce conjectural bounds
independent of $q$.

\begin{prop}
 Assume Conjecture~1.2 in \cite{DM14} holds. Then in the decomposition matrix
 of the principal $\Phi_4$-block of $F_4(q)$ we have $b_1 \in \{2,3,4\}$ and
 $c_1 \in \{1,2\}$.
\end{prop}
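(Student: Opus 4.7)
The plan is to mirror the strategy of Proposition~\ref{prop:conjForD7}. First, the lower bounds are essentially free: the entry $b_2 = 2b_1 - 3$ in the $B_2:.1^2$-column and the entry $c_2 = c_1 - 1$ in the $\phi_{9,10}$-column are genuine decomposition numbers, hence non-negative integers, so (Deg) forces $b_1 \geq 2$ and $c_1 \geq 1$. The content of the proposition is therefore the upper bounds $b_1 \leq 4$ and $c_1 \leq 2$, which must come from Conjecture~1.2 of \cite{DM14}.

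Concretely, I would select several elements $w \in W(F_4)$ together with eigenvalues $\lambda \in \overline{\FF}_\ell^\times$ and compute, using \Chevie, the virtual character $Q_w[\lambda]$ afforded by the Alvis--Curtis dual of the generalized $\lambda$-eigenspace of Frobenius on the intersection cohomology of the Deligne--Lusztig variety attached to $w$. By Conjecture~1.2 of \cite{DM14}, $\pm Q_w[\lambda]$ is the unipotent part of a projective character, so its expansion on the PIMs $\Psi_1,\ldots,\Psi_{16}$ of Table~\ref{tab:F4,d=4} must have non-negative integer coefficients. Since each $\Psi_i$ is a known function of the unknowns $a_1,a_2,b_1,c_1,c_3,c_4,d$ (subject to the relation $c_4 = c_1 + 2c_3 - 2$ and the already-established bounds $a_1 \leq 5$, $a_2 \leq 13+(5-a_1)d$, $c_3 \in \{0,1\}$, $d \in \{0,1,2\}$), each expansion yields a family of linear inequalities; suitable $(w,\lambda)$ will isolate the desired bounds on $b_1$ and $c_1$.

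For the bound $c_1 \leq 2$ one looks for an element $w$ whose cohomology features $c_1$ with a positive coefficient on some $\Psi_i$ with $i \geq 13$, balanced against a negative coefficient on another $\Psi_j$ in the same virtual character; since the auxiliary unknowns $c_3, c_4, d$ only enter through the fixed combinations above, the inequality can be reduced to a relation purely in $c_1$. For the bound $b_1 \leq 4$ one seeks an analogous element whose expansion involves $b_1$ and $b_2 = 2b_1-3$ with opposite signs, so that non-negativity on $\Psi_{15}$ or $\Psi_{16}$ forces $b_1 < 5$. Natural first candidates are elements of lengths $8$ and $12$ of the kinds that have already been used in the proof of Theorem~\ref{thm:F4,d=4} (the Coxeter element, $s_1s_2s_3s_4s_1s_2s_3s_4$, and $s_1s_2s_3s_4s_1s_2s_3s_4s_1s_2s_3s_4$), together with eigenvalues $\lambda \in \{1,-1,q^2,-q^2\}$ corresponding to the $\Phi_4$-stratification.

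The principal obstacle is the combinatorial search rather than any theoretical difficulty: there are many candidate pairs $(w,\lambda)$ and many of them merely reprove constraints already implied by (DL), (Red) or \cite[Thm.~6.5(ii)]{DLM14}. One must pick $w$ so that the linear forms recording the multiplicities $\langle Q_w[\lambda],\Psi_i\rangle$ cut out a cone tight enough to pin down $b_1$ and $c_1$. Once the correct $w$ are identified the verification reduces to a finite \Chevie\ calculation and the conclusion is immediate from non-negativity, exactly as in the $D_7$ and $\tw2E_6$ cases.
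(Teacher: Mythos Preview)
Your approach is correct and matches the paper's: lower bounds come for free from non-negativity of the decomposition numbers $2b_1-3$ and $c_1-1$ (this is just non-negativity, not (Deg)), and the upper bounds come from Conjecture~1.2 applied to suitable $Q_w[\lambda]$. The paper's proof is simply the endpoint of your search: it uses the Coxeter element $w_1=s_1s_2s_3s_4$, where $\langle Q_{w_1},\Psi_{13}\rangle=5-2c_1$ gives $c_1\le 2$, and the length-$8$ element $w_2=s_2s_4s_3s_2s_1s_3s_2s_3$ (not $(s_1s_2s_3s_4)^2$), where $\langle Q_{w_2}[1],\Psi_{15}\rangle=4-b_1$ gives $b_1\le 4$.
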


\begin{proof}
We compute the Alvis--Curtis dual of the intersection cohomology of two
Deligne--Lusztig varieties, corresponding to the elements
$w_1 = s_1 s_2 s_3 s_4$ and $w_2 =s_2s_4s_3s_2s_1s_3s_2s_3 $. Conjecture~1.2 in \cite{DM14}
predicts that the corresponding characters, denoted in \cite{DM14} by $Q_{w_1}$ and $Q_{w_2}$,
are, up to sign, the unipotent part of projective characters. The same holds for
the generalized eigenspaces of $F$ on these characters. The multiplicity of $\Psi_{13}$
in $Q_{w_1}$ is $5-2c_1$, which forces $c_1 \leq 2$, and the multiplicity of $\Psi_{15}$ in
the $1$-eigenspace of $F$ on $Q_{w_2}$ is $4-b_1$, which forces $b_1 \leq 4$.
\end{proof}

We collect information on the Harish-Chandra series for the blocks considered
in this section in the subsequent Table~\ref{tab:4-blocks2}.

\begin{table}[ht]
\caption{HC-series in $\Phi_4$-blocks of defect $\Phi_4^2$}    \label{tab:4-blocks2}
$$\begin{array}{r|cc|ccccccc}
 G& W_G(b)& |\IBr(b)|& ps& B_2& .1^2& C_2& .1^2& A_3& c\\
\hline
    B_4   & G(4,1,2)& 14& 6& 2& 2&  &  & 1& 3\\
    C_4   &         &   & 6&  &  & 2& 2& 1& 3\\
\hline
    F_4   &      G_8& 16& 5& 1& 1& 1& 1&  & 7\\
\end{array}$$
\end{table}


\end{document}